\documentclass{article}

\usepackage{amsmath, amssymb, amsthm}
\usepackage{mathtools}
\usepackage{mathrsfs}
\usepackage{bm}
\usepackage{extarrows}
\usepackage{geometry}
\usepackage{float}
\usepackage{indentfirst}
\usepackage{anyfontsize}
\usepackage{booktabs,multirow}
\usepackage{multicol}
\usepackage{enumitem}
\usepackage{appendix}

\usepackage[dvipsnames]{xcolor}
\usepackage{graphicx}
\graphicspath{
    {./figure/}{./figures/}{./image/}{./images/}{./graphic/}
    {./graphics/}{./picture/}{./pictures/}
}

\usepackage{subcaption}
\usepackage{epstopdf}

\usepackage{algorithm}
\usepackage{algorithmic}
\usepackage{hyperref}

\usepackage{thmtools}
\declaretheorem[style=plain, name=Theorem, numbered=yes]{theorem}
\declaretheorem[style=plain, name=Proposition, numbered=yes, sibling=theorem]{proposition}

\declaretheorem[style=plain, name=Lemma, numbered=yes, sibling=theorem]{lemma}

\declaretheorem[style=remark, name=Remark, numbered=yes]{remark}
\declaretheorem[style=remark, name=Note, numbered=no]{note*}

\DeclarePairedDelimiter{\inner}{\langle}{\rangle}
\DeclarePairedDelimiter{\norm}{\lVert}{\rVert}

\DeclareMathOperator{\diag}{diag}

\DeclareMathOperator{\qr}{qr}

\DeclareMathOperator{\spanop}{span}

\newenvironment{keywords}{\par\vskip 2ex\textbf{Keywords:}\enspace}{\par}

\title{An Asymptotic-Preserving Dynamical Low-Rank Semi-Lagrangian Method for Multiscale Linear Kinetic Transport Equations}

\author{
    Shun Li\thanks{School of Mathematical Sciences, University of Science and Technology of China. Email: {\tt lishun@mail.ustc.edu.cn}.} \and
    Yan Jiang\thanks{School of Mathematical Sciences, University of Science and Technology of China. Email: {\tt jiangy@ustc.edu.cn}. Research is partially supported by NSFC grant No.12271499.} \and
    Mengping Zhang\thanks{School of Mathematical Sciences, University of Science and Technology of China \& State Key Laboratory of Cognitive Intelligence. Email: {\tt mpzhang@ustc.edu.cn}.}
    \and
    Tao Xiong\thanks{School of Mathematical Sciences, University of Science and Technology of China \& State Key Laboratory of Cognitive Intelligence. Email: {\tt taoxiong@ustc.edu.cn}. Research is partially supported by National Key R\&D Program of China No. 2022YFA1004500, NSFC grant No.12571443.}
}

\date{}

\begin{document}

\maketitle

\begin{abstract}
    In this paper, we develop an asymptotic-preserving (AP) dynamical low-rank semi-Lagrangian method for multiscale linear kinetic transport equations.
    The method combines the large-time-step capability of semi-Lagrangian discretizations with the storage and cost reduction provided by low-rank representations.
    The proposed scheme couples an approximate macroscopic density update with the basis update Galerkin integrator for the kinetic distribution.
    To retain the reduced complexity in the semi-Lagrangian flux evaluation, the flux derivative is computed through a sampled angular quadrature strategy.
    We establish an unconditional stability analysis of the full-quadrature low-rank scheme in the constant-coefficient case.
    The error induced by angular sampling in the flux derivative is quantified.
    The resulting scheme is shown to be AP in the diffusive limit.
    Numerical experiments, including high-dimensional test cases, demonstrate that the proposed method is AP, stable under large time steps, and computationally efficient across kinetic and diffusive regimes.
\end{abstract}

\begin{keywords}
    kinetic transport equation; asymptotic-preserving; semi-Lagrangian; dynamical low-rank.
\end{keywords}

\section{Introduction}

Kinetic transport equations are fundamental models in radiative transfer, neutron transport, rarefied gas dynamics, and kinetic gas theory \cite{case1967,Chandrasekhar1960,lewis1983computational,Cercignani1988,DimarcoPareschi2014}.
In the diffusive scaling, the Knudsen number $\varepsilon$ may be small, so the kinetic model approaches a macroscopic diffusion equation and direct discretizations face both multiscale stiffness and high-dimensional phase-space cost.
Asymptotic-preserving (AP) schemes address the stiffness by capturing the correct macroscopic limit as $\varepsilon\to0$ without explicitly resolving the small scale \cite{jin1999efficient,jin2022asymptotic}.
For linear kinetic equations, such schemes have been developed using micro--macro, implicit-explicit (IMEX) discretizations, discontinuous Galerkin (DG), and semi-Lagrangian discretizations \cite{lemou2008new,liu2010analysis,boscarino2013implicit,JLQX15,ding2023accuracy,peng2021asymptotic,zhang2023asymptotic,cai2024asymptotic}.

Low-rank approximation provides a complementary way to reduce the cost and storage of high-dimensional kinetic discretizations \cite{bachmayr2023low,einkemmer2025review}.
Dynamical low-rank (DLR) approximation evolves a low-rank representation by projecting the governing equation onto the tangent space of a fixed-rank matrix manifold \cite{koch2007dynamical}.
Projector-splitting methods, basis update Galerkin (BUG) integrators, and rank-adaptive variants provide practical and robust time integrators for such low-rank evolutions \cite{lubich2014projector,kieri2016discretized,ceruti2022unconventional,ceruti2022rank}.
These ideas have been applied to Vlasov, Bhatnagar--Gross--Krook (BGK), neutron transport, and radiative-transfer models \cite{einkemmer2018lowrank,einkemmer2021efficient,peng2020low,peng2023sweep,kusch2022low,kusch2023robust}.
As an alternative approach, the step-and-truncation (SAT) method builds a low-rank solution from a traditional full-rank discretization, which updates the solution and then truncates small singular modes at each time step \cite{guo2024conservative,guo2024local,sands2025high}.
Recent sweep-based low-rank methods also form an efficient class of solvers for steady-state radiative-transfer equations \cite{peng2023sweep,guo2025inexact,guo2026highly,haut2026efficient}.

Several recent developments have sought to improve low-rank kinetic solvers not only in storage efficiency but also in time stepping and online evaluation.
One line combines SL transport with the SAT method.
Beginning with SL tensor-train solvers for Vlasov equations \cite{Kormann2015}, recent SL adaptive-rank (SLAR) approaches use pseudo-skeleton approximations \cite{tyrtyshnikov1995pseudo} to retain the large-time-step advantage of SL discretizations while controlling ranks and costs \cite{zheng2025semi,zheng2025highDimensionalSLAR}.
Another line addresses the multiscale behavior of low-rank transport solvers.
AP-DLR methods based on macro--micro formulations have been developed for multiscale linear transport equations \cite{einkemmerHuWang2021apDLR}, diffusion-limit error analysis is available for DLR integrators for linear Boltzmann equations \cite{ding2021dynamical},
and energy-stable AP-DLR or BUG-type formulations further address stability and AP behavior of the low-rank microscopic component \cite{einkemmerHuKusch2024energyStable,frank2025asymptotic}.
Temporal-stability-enhanced low-rank discretizations provide routes toward large time steps in the diffusive regime \cite{liJiangZhangXiong2026temporalStability}, while the Galerkin alternating projection method combines a fully implicit discretization with modified low-rank integrators to obtain unconditional stability and AP behavior \cite{ceruti2025galerkin}.
A third ingredient is hyper-reduction by interpolation or sampling.
In reduced-order modeling, the discrete empirical interpolation method (DEIM) \cite{chaturantabut2010nonlinear}, QR-based DEIM (QDEIM) \cite{drmac2016new}, and empirical cubature \cite{an2008optimizing,hernandez2017dimensional} are standard tools for avoiding full-dimensional online evaluations.
Related ideas have recently been used in low-rank kinetic and radiative-transfer solvers, including DEIM-type tensor-cross algorithms \cite{Ghahremani2024}, interpolatory DLR methods \cite{dektor2024interpolatoryBGK}, selected-angle $S_N$-like DLR formulations \cite{haut2026efficient,guo2026highly}, and greedy angular sampling strategies \cite{sandsQiuHayesZheng2026greedyBGK}.

In this work, we develop an AP dynamical low-rank semi-Lagrangian method for multiscale linear kinetic transport equations.
Starting from the AP FD-SL scheme of \cite{zhang2023asymptotic}, the method introduces a weighted low-rank representation of the kinetic distribution and advances it by a BUG integrator, while the macroscopic density is updated through an approximate SL flux derivative.
The main computational difficulty is that this flux derivative involves direction-dependent characteristic tracing and angular integration.
A direct evaluation over all angular directions would largely destroy the reduced complexity gained from the low-rank representation.
To retain the low-rank complexity, we evaluate it using QDEIM-selected angular directions together with effective quadrature weights that enforce exactness on a target angular space.
For the constant-coefficient problem, we establish an unconditional stability analysis of the full-quadrature low-rank scheme.
We also quantify the error induced by angular sampling in the flux derivative and show that both the full-quadrature and sampled schemes are AP in the diffusive limit.
Numerical experiments verify the AP property, large-time-step stability, and computational efficiency of the proposed method in both kinetic and diffusive regimes.

For the remainder,
Section~\ref{sec:background} recalls the SL reformulation and DLR framework.
Section~\ref{sec:numerical-method} gives the full-rank and low-rank schemes, including the multidimensional extension and complexity.
Section~\ref{sec:theoretical-analysis} establishes constant-coefficient energy stability, estimates the flux-derivative sampling error, and proves AP inheritance.
Section~\ref{sec:numerical-results} reports numerical tests, and Section~\ref{sec:conclusion} concludes.

\section{Background}\label{sec:background}

\subsection{Model problem}
We consider the multiscale linear kinetic transport equation
\begin{equation}\label{eq:model-general}
    \partial_t f + \frac{1}{\varepsilon}\bm{\Omega}\cdot\nabla_{\bm{x}} f
    =
    \frac{\sigma^s}{\varepsilon^2}\bigl(\inner*{f}_{\bm{\Omega}}-f\bigr) - \sigma^a f + \Phi.
\end{equation}
Here $f = f(\bm{x},\bm{\Omega},t)$ denotes the particle density at time $t \ge 0$, position $\bm{x}\in D\subset\mathbb{R}^{3}$, and angular direction $\bm{\Omega}\in\mathbb{S}^{2}$.
The source term is denoted by $\Phi = \Phi(\bm{x},t)$.
The normalized angular average is defined by
\[
    \inner*{f}_{\bm{\Omega}} = \frac{1}{4\pi} \int_{\mathbb{S}^2} f\,\mathrm{d}\,\bm{\Omega}.
\]
The macroscopic density is given by $\rho = \inner*{f}_{\bm{\Omega}}$.
The scattering and absorption coefficients satisfy $\sigma^s(\bm{x}) \geq \sigma^s_{\min} > 0$ and $\sigma^a(\bm{x}) \geq 0$, respectively.
The Knudsen number $\varepsilon > 0$ denotes the ratio of the mean free path of particles to the characteristic length.
Taking the angular average of \eqref{eq:model-general} gives
\[
    \partial_t\rho
    +
    \frac{1}{\varepsilon}
    \nabla_{\bm{x}}\cdot\inner*{\bm{\Omega} (f-\rho)}_{\bm{\Omega}}
    =
    -\sigma^a(\bm{x})\rho+\Phi(\bm{x},t).
\]
In the diffusive limit $\varepsilon\to0$, the kinetic solution relaxes toward its angular average, and the limiting density satisfies
\[
    \partial_t \rho
    =
    \nabla_{\bm{x}}\cdot\left(\frac{1}{3\sigma^s(\bm{x})}
    \nabla_{\bm{x}}\rho\right)
    -
    \sigma^a(\bm{x})\rho
    +
    \Phi(\bm{x},t).
\]
The factor $1/3$ comes from the normalized spherical average $\inner*{\Omega_i\Omega_j}_{\bm{\Omega}}=\delta_{ij}/3$.

We describe the method in the 1D1V case to keep the notation simple.
The extension to 2D2V and 3D2V follows the same construction.
In the 1D1V case, $v\in[-1,1]$ and \eqref{eq:model-general} reduces to
\[
    f_t + \frac{v}{\varepsilon} f_x = \frac{\sigma^s}{\varepsilon^2}\bigl(\rho-f\bigr) -\sigma^a f+\Phi,
    \quad
    \rho(x,t)=\inner*{f} = \frac12 \int_{-1}^1 f(x,v,t)\,\mathrm{d} v.
\]
The averaged equation is
\[
    \rho_t+\frac1\varepsilon\partial_x\inner*{v(f-\rho)}
    =
    -\sigma^a\rho+\Phi.
\]

\subsection{Semi-Lagrangian reformulation}

We recall the semi-Lagrangian AP reformulation \cite{zhang2023asymptotic,cai2024asymptotic}, which derives an approximate evolution equation for the macroscopic density from the formal solution of the distribution function.

Fix a point $(x_\ast,t_\ast)$ and a velocity $v$. The characteristic passing through $(x_\ast,t_\ast)$ is
\[
    X(t)=x_\ast-\frac{v}{\varepsilon}(t_\ast-t).
\]
Along this curve, the kinetic equation becomes
\[
    \frac{\mathrm{d}}{\mathrm{d}t}f(X(t),v,t)
    + \mu f(X(t),v,t)
    =
    \frac{\sigma^s}{\varepsilon^2}\rho(X(t),t)
    +
    \Phi(X(t),t),
\]
where $\mu = \sigma^s / \varepsilon^2 + \sigma^a$.
Using the exponential integrating factor and integrating from $t_n$ to $t_\ast$ gives
    {\small\begin{equation}\label{eq:fdsl-integrating-factor-solution}
            f(x_\ast,v,t_\ast)
            =
            e^{-\mu(t_\ast-t_n)} f(X(t_n),v,t_n)
            +
            \int_{t_n}^{t_\ast} e^{-\mu(t_\ast-\tau)} \left( \frac{\sigma^s}{\varepsilon^2}\rho(X(\tau),\tau) + \Phi(X(\tau),\tau) \right) \,\mathrm{d}\tau,
        \end{equation}}
where the coefficients are frozen locally at $x_\ast$.
Since
\[
    \frac{\mathrm{d}}{\mathrm{d}\tau}\rho(X(\tau),\tau)
    =
    \partial_t\rho(X(\tau),\tau)
    +
    \frac{v}{\varepsilon}\partial_x\rho(X(\tau),\tau),
\]
integrating the density in \eqref{eq:fdsl-integrating-factor-solution} by parts gives
\begin{align*}
    f(x_\ast,v,t_\ast)
    ={}       &
    \frac{\sigma^s}{\mu\varepsilon^2}\rho(x_\ast,t_\ast)
    +
    e^{-\mu(t_\ast-t_n)}
    \left(
    f(X(t_n),v,t_n)
    -
    \frac{\sigma^s}{\mu\varepsilon^2}\rho(X(t_n),t_n)
    \right)
    \\
                & -
    \int_{t_n}^{t_\ast}
    \frac{\sigma^s}{\mu\varepsilon^2}
    e^{-\mu(t_\ast-\tau)}
    \left(
    \partial_t\rho(X(\tau),\tau)
    +
    \frac{v}{\varepsilon}\partial_x\rho(X(\tau),\tau)
    \right)
    \,\mathrm{d}\tau
    \\
                & +
    \int_{t_n}^{t_\ast}
    e^{-\mu(t_\ast-\tau)}
    \Phi(X(\tau),\tau)
    \,\mathrm{d}\tau
    \\
    \approx{} &
    \frac{\sigma^s}{\mu\varepsilon^2}\rho(x_\ast,t_\ast)
    +
    e^{-\mu(t_\ast-t_n)}
    \left(
    f(X(t_n),v,t_n)
    -
    \frac{\sigma^s}{\mu\varepsilon^2}\rho(X(t_n),t_n)
    \right)
    \\
                & -
    \left(
    \partial_t\rho(x_\ast,t_\ast)
    +
    \frac{v}{\varepsilon}\partial_x\rho(x_\ast,t_\ast)
    \right)
    \frac{\sigma^s}{\mu^2\varepsilon^2}
    \left(1-e^{-\mu(t_\ast-t_n)}\right)
    \\
                & +
    \frac{1-e^{-\mu(t_\ast-t_n)}}{\mu}
    \Phi(x_\ast,t_\ast).
\end{align*}
Here we freeze the slowly varying factors in the remaining integrals at $(x_\ast,t_\ast)$.
Taking the first velocity moment of $f-\rho$ yields
\begin{align*}
    \inner*{v(f-\rho)}(x_\ast,t_\ast)
    \approx{} &
    e^{-\mu(t_\ast-t_n)}
    \inner*{
        v(f-\rho)
        \left(
        x_\ast-\frac{v}{\varepsilon}(t_\ast-t_n),
        v,t_n
        \right)
    }             \\
              & -
    \frac{\sigma^s}{3\mu^2\varepsilon^3}
    \left(1-e^{-\mu(t_\ast-t_n)}\right)
    \partial_x\rho(x_\ast,t_\ast).
\end{align*}
Combining this approximation with the averaged equation gives the following approximate evolution equation for the macroscopic density:
\begin{equation}\label{eq:fdsl-approx-density-equation}
    \rho_t
    +
    \frac{\alpha_1}{\varepsilon}
    \partial_x
    \left(
    \inner*{
            v(f-\rho)
            \left(
            x-\frac{v}{\varepsilon}(t-t_n),
            v,t_n
            \right)
        }
    \right)
    -
    \partial_x
    \left(
    \frac{\alpha_2}{3\sigma^s}
    \partial_x\rho
    \right)
    =
    -\sigma^a\rho+\Phi,
\end{equation}
where
\[
    \alpha_1=e^{-\mu(t-t_n)},
    \qquad
    \alpha_2=
    \left(
    \frac{\sigma^s}{\mu\varepsilon^2}
    \right)^2
    \left(1-e^{-\mu(t-t_n)}\right).
\]
The first correction term in \eqref{eq:fdsl-approx-density-equation} is the transport part obtained by following characteristics, while the second correction term is the diffusion part.
As $\varepsilon\to0$, one has $\alpha_1/\varepsilon\to0$ and $\alpha_2\to1$, so \eqref{eq:fdsl-approx-density-equation} reduces to the diffusion limit
\[
    \rho_t
    -
    \partial_x
    \left(
    \frac{1}{3\sigma^s}
    \partial_x\rho
    \right)
    =
    -\sigma^a\rho+\Phi.
\]

\begin{remark}\label{rem:source-velocity-dependent}
    In some special cases, such as the manufactured-solution accuracy test, the constructed source term may depend on $v$.
    If $\Phi=\Phi(x,v,t)$, then the averaged source and the first source moment enter the evolution equation, and \eqref{eq:fdsl-approx-density-equation} is replaced by
    \begin{align*}
        \rho_t
        +
        \frac{\alpha_1}{\varepsilon}
        \partial_x
        \left(
        \inner*{v(f-\rho)}
        \left(x-\frac{v}{\varepsilon}(t-t_n),t_n\right)
        \right)
        -
        \partial_x\left(\frac{\alpha_2}{3\sigma^s}\partial_x\rho\right)
        ={} \\
        -\sigma^a\rho
        +\inner*{\Phi}
        -
        \frac{1}{\varepsilon}\partial_x
        \left(
        \frac{1-e^{-\mu(t-t_n)}}{\mu}
        \inner*{v\Phi}
        \right).
    \end{align*}
    The last term on the right-hand side vanishes in the diffusion limit.
\end{remark}

\subsection{Dynamical low-rank framework}

We briefly recall the dynamical low-rank (DLR) framework used for the microscopic update \cite{koch2007dynamical,lubich2014projector,kieri2016discretized}.
Let $A(t)\in\mathbb{R}^{m_1\times m_2}$ satisfy
\[
    \partial_tA(t)=\mathcal{F}(t,A(t)).
\]
DLR approximates $A(t)$ on the rank-$r$ matrix manifold $\mathcal{M}_r$ by
\[
    A(t)\approx Y(t)=X(t)S(t)V(t)^\top,
\]
where $X(t)\in\mathbb{R}^{m_1\times r}$ and $V(t)\in\mathbb{R}^{m_2\times r}$ have orthonormal columns, and $S(t)\in\mathbb{R}^{r\times r}$.
The evolution is obtained by projecting the vector field onto the tangent space of $\mathcal{M}_r$:
\begin{equation}\label{eq:dlr-tangent-projected-evolution}
    \partial_tY(t)=\mathcal{P}_{Y(t)}\mathcal{F}(t,Y(t)),
\end{equation}
where $\mathcal{P}_{Y}$ denotes the orthogonal projection onto $\mathcal{T}_{Y}(\mathcal{M}_r)$.
Let the singular value decomposition (SVD) of $Y \in \mathcal{M}_r$ be
$Y = X S V^\top$. The orthogonal projection $\mathcal{P}_{Y}$ is explicitly given by \cite[Lemma~4.1]{koch2007dynamical}
\[
    \mathcal{P}_Y(Z)
    =
    XX^\top Z+ZVV^\top-XX^\top ZVV^\top,
    \quad
    Z\in\mathbb{R}^{m_1\times m_2}.
\]

The BUG integrator gives a practical time discretization of \eqref{eq:dlr-tangent-projected-evolution} by evolving the factors through $K$-, $L$-, and $S$-steps \cite{ceruti2022unconventional}.
It preserves the prescribed rank during the update.
Rank-adaptive variants, such as the aBUG integrator \cite{ceruti2022rank}, add truncation or augmentation steps to select the rank according to a tolerance.
We use the fixed-rank BUG form in the method below; broader variants of low-rank solvers for kinetic equations are reviewed in \cite{einkemmer2025review}.

\section{Numerical Method}\label{sec:numerical-method}

\subsection{Full-Rank Scheme}

We first present an FD-SL scheme based on \cite{zhang2023asymptotic}, which serves as the full-rank reference scheme. We discretize the spatial interval $[x_L,x_R]$ by the uniform periodic grid
\[
    \Delta x=\frac{x_R-x_L}{N_x},\qquad
    x_i=x_L+(i-1)\Delta x,\quad i=1,\ldots,N_x,
\]
and discretize the angular variable by a quadrature rule $\{(v_j,w_j)\}_{j=1}^{N_v}$ with normalized weights, $\sum_{j=1}^{N_v} w_j=1$.
Define the following notation:
\[
    F^n = [f_1^n,\dots,f_{N_v}^n] \in \mathbb{R}^{N_x \times N_v},
    \quad  f_j^n=((f_{j}^n)_1,\ldots,(f_{j}^n)_{N_x})^\top,
\]
where $(f_{j}^n)_i \approx f(x_i, v_j, t^n)$.
Let $w=(w_1,\ldots,w_{N_v})^\top$ and $v=(v_1,\ldots,v_{N_v})^\top$.
The discrete density is given by $\rho^n=F^n w \in \mathbb{R}^{N_x}$.
We also denote by $\sigma^s,\sigma^a,\Phi^{n+1}\in\mathbb{R}^{N_x}$ the corresponding grid vectors, and define $\alpha_1,\alpha_2,\beta\in\mathbb{R}^{N_x}$ pointwise by
\[
    \mu_i=\frac{\sigma_i^s}{\varepsilon^2}+\sigma_i^a,
    \alpha_{1,i}=e^{-\mu_i\Delta t},
    \alpha_{2,i}
    =
    \left(\frac{\sigma_i^s}{\mu_i\varepsilon^2}\right)^2
    \left(1-e^{-\mu_i\Delta t}\right),
    \beta_i=\frac{\alpha_{2,i}}{3\sigma_i^s}.
\]
Define the upwind difference operator associated with $v_j$ by
\[
    D_x^{\mathrm{up},(j)}
    :=
    \begin{cases}
        D_x^-, & v_j\ge0, \\
        D_x^+, & v_j<0.
    \end{cases}
\]
Here $(D_x^- u)_i=(u_i-u_{i-1})/\Delta x$ and $(D_x^+ u)_i=(u_{i+1}-u_i)/\Delta x$ for any spatial grid function $u$.
The variable-coefficient diffusion operator is discretized by
\[
    D_x(\beta D_x u)_i
    =
    \frac{1}{\Delta x^2}\left( \beta_{i+1/2}(u_{i+1}-u_i) - \beta_{i-1/2}(u_i-u_{i-1}) \right),
    \quad
    \beta_{i+1/2}=\frac12(\beta_i+\beta_{i+1}).
\]
Boundary rows are modified according to the prescribed boundary condition.
Let $L_\beta \in \mathbb{R}^{N_x \times N_x}$ denote the matrix representation of $u\mapsto D_x(\beta D_x u)$.

The semi-Lagrangian flux derivative requires the spatial derivative of $f-\rho$ evaluated at the characteristic foot point $x_i-v_j\Delta t/\varepsilon$.
For each velocity $v_j$, we first apply the corresponding upwind difference operator to $f_j^n-\rho^n$, and then evaluate the resulting grid function at the foot point by linear interpolation.
Let $\mathcal{B}_j$ denote the linear interpolation operator. Under periodic boundary conditions,
\[
    \mathcal{B}_j=(1-\lambda_j)S_{m_j}+\lambda_j S_{m_j+1},
    \qquad
    \frac{v_j\Delta t}{\varepsilon\Delta x}=m_j+\lambda_j,\quad
    m_j\in\mathbb{Z},\quad 0\le\lambda_j<1,
\]
where $(S_m q)_i=q_{i-m}$ denotes the periodic grid shift.
For each velocity $v_j$, we define the backtracked derivative vector by
\begin{equation}\label{eq:fullrank-backtrack-stencil}
    b_j^n := \mathcal{B}_j \left(D_x^{\mathrm{up},(j)}(f_j^n-\rho^n)\right) \in \mathbb{R}^{N_x}.
\end{equation}
Componentwise,
\[
    (b_j^n)_i \approx \partial_x(f-\rho)\left(x_i-\frac{v_j}{\varepsilon}\Delta t,v_j,t_n\right).
\]
Collecting these vectors gives
\[
    B^n=[b_1^n,\ldots,b_{N_v}^n]\in\mathbb{R}^{N_x\times N_v}.
\]
The discrete backtracked macroscopic flux derivative is then assembled at each grid point $x_i$ as
\[
    \mathcal{J}_i^n = \sum_{j=1}^{N_v}w_jv_j (b_j^n)_i \approx \partial_x \inner*{v(f-\rho)\left(x_i-\frac{v}{\varepsilon}\Delta t,v,t_n\right)},
\]
or, equivalently,
\[
    \mathcal{J}^n = B^n(w\odot v)\in\mathbb{R}^{N_x}.
\]
Here $\odot$ denotes componentwise multiplication.

We now describe the full-rank FD-SL update.
At the beginning of each time step, we assemble $\mathcal{J}^n$ as described above.
The provisional density $\rho^{n+1,*}$ is then computed from the discretization of \eqref{eq:fdsl-approx-density-equation}:
\[
    \frac{\rho^{n+1,*}-\rho^n}{\Delta t} + \frac{1}{\varepsilon} \diag(\alpha_1) \mathcal{J}^n - L_\beta \rho^{n+1,*}
    = - \diag(\sigma^a) \rho^{n+1,*} +\Phi^{n+1},
\]
which can be rearranged into the macroscopic linear system
\begin{equation}\label{eq:macroscopic-linear-system}
    \left( I - \Delta t L_\beta + \Delta t \diag(\sigma^a) \right) \rho^{n+1,*}
    =
    \rho^n - \frac{\Delta t}{\varepsilon}\diag(\alpha_1)\mathcal{J}^n + \Delta t \Phi^{n+1}.
\end{equation}
After obtaining $\rho^{n+1,*}$, we substitute it into the microscopic equation and apply a backward Euler discretization in time.
Since $\rho^{n+1,*}$ is fixed, the update decouples over velocities. Each column $f_j^{n+1}$ satisfies
\[
    \frac{f^{n+1}_j-f^n_j}{\Delta t} + \frac{v_j}{\varepsilon} D_x^{\mathrm{up},(j)} f^{n+1}_j
    =
    \frac{1}{\varepsilon^2}\diag(\sigma^s) \bigl(\rho^{n+1,*}-f^{n+1}_j\bigr) - \diag(\sigma^a) f^{n+1}_j +\Phi^{n+1}.
\]
Thus the velocity-column solves are independent and can be carried out in parallel.
Finally, the macroscopic density is reset by angular averaging, $\rho^{n+1}=F^{n+1}w$,
so that it is consistent with the updated distribution.

\subsection{Low-Rank Scheme}

The low-rank scheme is built upon the full-rank FD-SL discretization.
It combines an energy-consistent weighted low-rank representation with the BUG integrator for the microscopic update, while the macroscopic semi-Lagrangian flux derivative is assembled efficiently by a sampled angular quadrature.
This sampled low-rank scheme is denoted by SL-DLR in what follows.
We also keep the corresponding full-quadrature low-rank scheme, denoted by SL-DLR(full), as a reference to separate the fixed-rank approximation error from the sampling error.

We introduce the weighted low-rank representation \cite{liJiangZhangXiong2026temporalStability}
\[
    Y^n = F^n M = X^n S^n (V^n)^\top,
\]
where $M=\diag(\sqrt{w_1},\dots,\sqrt{w_{N_v}})$, $X^n\in\mathbb R^{N_x\times r}$ and $V^n\in\mathbb R^{N_v\times r}$ have orthonormal columns,
and $S^n\in\mathbb R^{r\times r}$.
The macroscopic density is then recovered as
\[
    \rho^n=F^nw=X^nS^n(V^n)^\top M\bm{1}.
\]
We now describe one time step of the low-rank scheme from $t_n$ to $t_{n+1}$, assuming that $Y^n=X^nS^n(V^n)^\top$ and $\rho^n$ are given.

We first construct the semi-Lagrangian flux derivative in the macroscopic equation. This is the only stage where angular sampling is required.
At the continuous level, this term has the form
\[
    \partial_x \inner*{v(f - \rho)\left(x - v \Delta t / \varepsilon, v, t_n\right)}.
\]
Since the characteristic foot point depends on the angular direction, this backtracked flux derivative is not separable in the low-rank factors.
Hence, a low-rank discretization using the full angular quadrature for the flux derivative would still need to form the physical-space columns associated with all $N_v$ velocities and backtrack them before summation.

To overcome this difficulty, we use a sampled angular quadrature to reduce this cost while approximating the flux derivative.
At time $t_n$, assume that a sampled direction set $\mathcal{I}_n$ and the associated effective quadrature weights $\widetilde{w}^n$ have been specified:
\[
    \mathcal{I}_n=\{i_1,\ldots,i_{m_n}\},
    \,
    P_n=[e_{i_1},\ldots,e_{i_{m_n}}] \in \mathbb{R}^{N_v \times m_n},
    \,
    \widetilde{w}^n
    =[\widetilde{w}^n_{1},\ldots,\widetilde{w}^n_{m_n}]^\top
    \in\mathbb{R}^{m_n}.
\]
Here $m_n < N_v$, and $e_i$ denotes the $i$-th column of the identity matrix.
The construction of $P_n$ and $\widetilde{w}^n$ will be described in Subsection~\ref{sec:sampling}.
We assemble the sampled counterpart of the full-rank backtracked flux derivative:
\begin{equation}\label{eq:lr-sampled-flux}
    \widetilde{\mathcal{J}}^n
    =
    (B^n P_n)\bigl(\widetilde{w}^n \odot (P_n^\top v)\bigr)
    \in\mathbb{R}^{N_x}.
\end{equation}
The computation of $\widetilde{\mathcal{J}}^n$ does not require forming the full matrix $F^n$;
it only uses the sampled physical columns
\[
    F^nP_n
    =
    X^nS^n(V^n)^\top M^{-1}P_n
    \in\mathbb{R}^{N_x\times m_n}.
\]
Using this sampled flux derivative, we solve the corresponding macroscopic linear system, analogous to \eqref{eq:macroscopic-linear-system} in the full-rank scheme, to obtain $\rho^{n+1,*}$:
\begin{equation}\label{eq:sampled-macroscopic-linear-system}
    \left(I - \Delta t L_\beta + \Delta t \diag(\sigma^a)\right)\rho^{n+1,*}
    =
    \rho^n - \frac{\Delta t}{\varepsilon}\diag(\alpha_1) \widetilde{\mathcal{J}}^n + \Delta t \Phi^{n+1}.
\end{equation}

After the macroscopic solve, we update the distribution matrix on the low-rank manifold with $\rho^{n+1,*}$ fixed.
Define
\[
    Q=\diag(v_1,\cdots,v_{N_v}),
    \quad
    |Q|=\diag(|v_1|,\cdots,|v_{N_v}|),
    \quad
    Q^\pm=\frac12(Q\pm |Q|).
\]
Let $D_x^-$ and $D_x^+$ denote the backward and forward difference matrices, respectively.
For the weighted matrix $Y(t)=F(t)M$, the microscopic evolution is written as
    {\small\begin{align*}
            \partial_t Y
            ={} \mathcal{G}\bigl(Y;\rho^{n+1,*}\bigr)
            ={} &
            -\frac{1}{\varepsilon}
            \bigl(D_x^- Y Q^+ + D_x^+ Y  Q^-\bigr)
            \\
                & +
            \frac{1}{\varepsilon^2}
            \diag(\sigma^s)(\rho^{n+1,*} (M\bm{1})^\top - Y)
            -
            \diag(\sigma^a) Y
            +
            \Phi^{n+1} (M\bm{1})^\top.
        \end{align*}}
The BUG integrator is then applied to $\partial_tY=\mathcal{G}(Y;\rho^{n+1,*})$ with initial value $Y^n=X^nS^n(V^n)^\top$:
\begin{enumerate}
    \item \textbf{K-step:} For an initial condition $K^n = X^n S^n \in\mathbb{R}^{N_x \times r}$, we solve the following equation to obtain $K^{n+1}$
          \[
              \frac{K^{n+1}-K^n}{\Delta t}
              =
              \mathcal{G}\bigl(K^{n+1}(V^n)^\top;\rho^{n+1,*}\bigr)V^n.
          \]

    \item \textbf{L-step:}  For an initial condition $L^n = V^n (S^n)^\top \in \mathbb{R}^{N_v \times r}$, we solve the following equation to obtain $L^{n+1}$
          \[
              \frac{L^{n+1}-L^n}{\Delta t}
              =
              \mathcal{G}\bigl(X^n(L^{n+1})^\top;\rho^{n+1,*}\bigr)^\top X^n.
          \]

    \item Update basis: $[X^{n+1}, \sim] = \qr(K^{n+1})$, $[V^{n+1}, \sim] = \qr(L^{n+1})$.

    \item \textbf{S-step:} For an initial condition $\widetilde{S}^n=(X^{n+1})^\top X^n S^n (V^n)^\top V^{n+1}$, we solve the following equation to obtain $S^{n+1}$.
          \[
              \frac{S^{n+1}-\widetilde{S}^n}{\Delta t}
              =
              (X^{n+1})^\top
              \mathcal{G}\bigl(X^{n+1}S^{n+1}(V^{n+1})^\top;\rho^{n+1,*}\bigr)
              V^{n+1}.
          \]

    \item Update $Y^{n+1} = F^{n+1}M = X^{n+1}S^{n+1}(V^{n+1})^\top$.
\end{enumerate}

After obtaining $Y^{n+1}$, we further update $\rho^{n+1} = X^{n+1}S^{n+1}(V^{n+1})^\top M\bm{1}$, so that it is consistent with the updated distribution.

\subsection{Sampled Angular Quadrature}\label{sec:sampling}

We now describe the sampled angular quadrature used for the flux-derivative vector in \eqref{eq:lr-sampled-flux}.
The construction has two components: a set of sampled angular directions and the effective quadrature weights.
The sampled directions are selected from a target angular space by a QDEIM-type procedure \cite{drmac2016new}, while the effective weights are determined by exactness equations on the same space, in the spirit of empirical cubature rules \cite{an2008optimizing,hernandez2017dimensional}.
Thus, the resulting rule is exact on the prescribed target space and is used to approximate the backtracked semi-Lagrangian flux derivative without sweeping over all $N_v$ angular directions.

We first identify the angular vector to which the sampled quadrature is applied. For each spatial grid point $x_i$, define
\[
    h_i^n = \bigl(v_j(b_j^n)_i\bigr)_{j=1}^{N_v}\in\mathbb{R}^{N_v},
\]
where $b_{j}^n$ is defined in \eqref{eq:fullrank-backtrack-stencil}.
Then the full and sampled flux derivatives can be written as
\[
    \mathcal{J}_i^n=w^\top h_i^n,
    \qquad
    \widetilde{\mathcal{J}}_i^n=(\widetilde{w}^n)^\top P_n^\top h_i^n.
\]
The sampled quadrature is designed to reproduce the weighted angular moments of these vectors using only the selected angular directions. We choose a target angular space
\[
    \mathcal{Z}^n=\operatorname{span}(Z^n)\subset\mathbb{R}^{N_v}
\]
and require the sampled weights to satisfy the exactness condition
\begin{equation}\label{eq:sampling-exactness-condition}
    (Z^n)^\top w
    =
    (P_n^\top Z^n)^\top \widetilde{w}^n,
    \quad
    \text{i.e.}
    \quad
    \sum_{j=1}^{N_v} w_j z_j
    =
    \sum_{k=1}^{m_n}\widetilde{w}_k^n z_{i_k}
    \quad
    \forall\, z\in\mathcal Z^n.
\end{equation}
Consequently, if $h_i^n\in\mathcal Z^n$, then the sampled rule gives
$\widetilde{\mathcal J}_i^n=\mathcal J_i^n$.
The corresponding approximation estimate is given later in Lemma~\ref{lem:theory-transport-flux-error}.

For the default construction, we use
\[
    Z^n=Z^n_{(1)}
    =
    [\,\bm{1},\;Q\bm{1},\;QM^{-1}V^n\,]
    \in\mathbb R^{N_v\times(r+2)}.
\]
This choice is motivated by the zeroth-order angular structure of the backtracked flux derivative. Indeed, without the characteristic shift, the angular vector
\[
    \bigl(v_j\partial_x(f-\rho)(x_i,v_j,t_n)\bigr)_{j=1}^{N_v}
    \in
    \operatorname{span}(Q\bm{1},QM^{-1}V^n).
\]
The additional column $\bm{1}$ enforces preservation of the zeroth angular moment.

In the transition regime, the characteristic backtracking may introduce a non-negligible higher-order angular contribution. A Taylor expansion gives
    {\small\[
            v_j\partial_x(f-\rho)
            \left(x_i-\frac{v_j\Delta t}{\varepsilon},v_j,t_n\right)
            =
            v_j\partial_x(f-\rho)(x_i,v_j,t_n)
            -
            \frac{\Delta t}{\varepsilon}
            v_j^2\partial_{xx}(f-\rho)(x_i,v_j,t_n)
            +\cdots.
        \]}
The next-order term therefore has angular structure in
\[
    \operatorname{span}(Q^2\bm{1},Q^2M^{-1}V^n).
\]
This motivates the enlarged target
\[
    Z^n_{(2)}
    =
    [\,\bm{1},\;Q\bm{1},\;QM^{-1}V^n,\;Q^2\bm{1},\;Q^2M^{-1}V^n\,],
\]
which is used in the transition-regime test to better approximate the leading backtracking correction.

After the target matrix $Z^n$ is chosen, we use a QDEIM-type pivoting procedure to select the sampled directions.
Specifically, we apply column-pivoted QR to the transpose of $Z^n$:
\[
    (Z^n)^\top\Pi=\mathcal{Q}\mathcal{R},
    \qquad
    \Pi=[e_{\pi_1},\cdots,e_{\pi_{N_v}}].
\]
The sampled index set is given by the first $m_n$ pivot rows,
\[
    \mathcal{I}_n=\{\pi_1,\cdots,\pi_{m_n}\},
    \qquad
    P_n=[e_{\pi_1},\cdots,e_{\pi_{m_n}}].
\]
For the default target $Z^n_{(1)}$, we take $m_n=r+2$, while for the enlarged target $Z^n_{(2)}$, we take $m_n=2r+3$.

Once $Z^n$ and $P_n$ have been fixed, the effective quadrature weights are computed from the exactness equations \eqref{eq:sampling-exactness-condition}. In the square nonsingular case, this gives
\[
    \widetilde{w}^n
    =
    \left((P_n^\top Z^n)^\top\right)^{-1} (Z^n)^\top w.
\]
If this solve produces negative entries, we instead use the nonnegative least-squares solution
\[
    \min_{\widetilde{w}^n \ge 0}
    \norm*{(P_n^\top Z^n)^\top \widetilde{w}^n - (Z^n)^\top w}_2^2.
\]

\begin{remark}
    The angular sampling stage involves several implementation choices and optional extensions:
    \begin{enumerate}
        \item Before applying the QDEIM-type pivoting, one may compress $Z^n$ to an orthonormal basis of its numerical range. This removes nearly linearly dependent target directions and reduces possible scaling effects.
        \item The cost of the QDEIM selection and the effective-weight computation is not dominant in the overall low-rank update.
        \item The QDEIM-type selection can be replaced by greedy DEIM \cite{chaturantabut2010nonlinear}. Oversampling can also be incorporated by retaining more pivots than target columns and solving the exactness equations in a least-squares sense.
    \end{enumerate}
\end{remark}

\subsection{Multidimensional extensions}

The multidimensional extensions use the same matrix layout:
\[
    F^n=[f_1^n,\ldots,f_{N_\Omega}^n]\in\mathbb{R}^{N_s\times N_\Omega},
    \qquad
    Y^n=F^nM = X^nS^n(V^n)^\top.
\]
Here, rows correspond to spatial grid points, and columns correspond to discrete angular directions.
In the two- and three-dimensional tests, we use the $S_N$ angular discretization based on the Chebyshev--Legendre product quadrature \cite{lewis1983computational,larsen2009advances}.
For the present $S_N$ product rule, the order $N$ gives $N_\Omega=2N^2$ directions on the unit sphere, together with the associated weights.

The multidimensional scheme is constructed by applying the one-dimensional characteristic tracing separately in each spatial direction.
For each directional flux derivative, the corresponding upwind derivative of $f-\rho$ is first evaluated on the spatial grid and then interpolated to the characteristic foot point.
The macroscopic density solve and the BUG microscopic update are obtained by applying the one-dimensional formulas componentwise.
The target angular space is enlarged to include the coordinate moments required by the flux-derivative approximation. For example, in three dimensions, we use
\[
    Z^n_{(1)}
    =
    [\,\bm{1},\;Q_x\bm{1},\;Q_y\bm{1},\;Q_z\bm{1},
    \;Q_xM^{-1}V^n,\;Q_yM^{-1}V^n,\;Q_zM^{-1}V^n\,].
\]
The same QDEIM-based angular sample selection and effective-weight computation are then applied.

\begin{remark}
    In the multidimensional tests, we reorder the candidate angular directions before applying QDEIM by grouping directions that are related by coordinate reflections.
    This preprocessing helps prevent the sampled directions from concentrating near a particular angular region, without imposing any geometric symmetry constraint on the selected directions.
\end{remark}

\subsection{Per-Step Computational Complexity}\label{sec:complexity}

We compare the leading per-step costs of the full-rank semi-Lagrangian method (SL), the low-rank method with full macroscopic flux-derivative quadrature (SL-DLR(full)), and the sampled low-rank method (SL-DLR).
Let $N_s$ be the total number of spatial grid points, $N_\Omega$ the number of angular directions, and $r$ the rank.
In the sampled method, both the target-space dimension and the number of selected directions are $\mathcal{O}(r)$.
All implicit systems are solved by GMRES, with iteration counts $C_\rho$, $C_F$, $C_K$, $C_L$, and $C_S$ for the macroscopic solve,
microscopic column solves, and low-rank K-, L-, and S-steps.
The resulting leading costs are summarized in Table~\ref{tab:complexity}.
The table shows that SL-DLR(full) removes the full microscopic column solves, but it still assembles the backtracked macroscopic flux derivative over all angular directions.
Thus, applying a low-rank representation only to the microscopic component is not sufficient to reduce the leading complexity, since the semi-Lagrangian macroscopic flux derivative still requires a full angular sweep.
SL-DLR avoids this remaining full angular sweep by evaluating the macroscopic flux derivative only on the sampled angular set.
If each physical and angular coordinate is resolved by $N$ points, so $N_s=\mathcal{O}(N^{d_{\bm{x}}})$ and $N_\Omega=\mathcal{O}(N^{d_{\bm{\Omega}}})$,
then, for fixed rank and bounded GMRES iteration counts under mesh refinement, the leading costs scale as
\[
    \mathcal{C}_{\mathrm{SL}}
    =
    \mathcal{O}(N^{d_{\bm{x}}+d_{\bm{\Omega}}}),
    \quad
    \mathcal{C}_{\mathrm{SL\text{-}DLR(full)}}
    =
    \mathcal{O}(N^{d_{\bm{x}}+d_{\bm{\Omega}}}),
    \quad
    \mathcal{C}_{\mathrm{SL\text{-}DLR}}
    =
    \mathcal{O}(N^{\max\{d_{\bm{x}},d_{\bm{\Omega}}\}}).
\]

\begin{table}[htbp]
    \centering
    \caption{Leading per-step costs of the full-rank and low-rank schemes.}
    \label{tab:complexity}
    \small
    \setlength{\tabcolsep}{3pt}
    \begin{tabular}{llll}
        \toprule
        Stage & SL                                  & SL-DLR(full) & SL-DLR \\
        \midrule
        Sampling construction
              & --
              & --
              & $\mathcal{O}(N_\Omega r^2)$
        \\
        Macroscopic flux-derivative assembly
              & $\mathcal{O}(N_sN_\Omega)$
              & $\mathcal{O}(N_sN_\Omega r)$
              & $\mathcal{O}(N_s r^2)$
        \\
        Macroscopic solve
              & $\mathcal{O}(C_\rho N_s)$
              & $\mathcal{O}(C_\rho N_s)$
              & $\mathcal{O}(C_\rho N_s)$
        \\
        Microscopic column solves
              & $\mathcal{O}(C_F N_sN_\Omega)$
              & --
              & --
        \\
        K-step
              & --
              & $\mathcal{O}(C_K N_s r^2)$
              & $\mathcal{O}(C_K N_s r^2)$
        \\
        L-step
              & --
              & $\mathcal{O}(C_L N_\Omega r^2)$
              & $\mathcal{O}(C_L N_\Omega r^2)$
        \\
        S-step
              & --
              & $\mathcal{O}(C_S r^3)$
              & $\mathcal{O}(C_S r^3)$
        \\
        QR/projection
              & --
              & $\mathcal{O}(N_s r^2+N_\Omega r^2)$
              & $\mathcal{O}(N_s r^2+N_\Omega r^2)$
        \\
        \bottomrule
    \end{tabular}
\end{table}

\section{Theoretical Analysis}\label{sec:theoretical-analysis}

We analyze the proposed low-rank schemes in the one-dimensional periodic source-free setting, so that $\Phi\equiv0$.
The analysis has three components.
First, we prove the unconditional stability of the full-rank SL scheme for the constant-coefficient problem and show that this stability is inherited by SL-DLR(full).
Second, we estimate the perturbation caused by the sampled evaluation of the flux derivative in both the kinetic and diffusive regimes.
Third, we prove the AP property of SL-DLR by combining the AP limit of the full-rank SL scheme, the equilibrium structure of the BUG update, and the sampling estimate.

For vectors and matrices, we use the discrete inner products and norms
\[
    \begin{gathered}
        \inner*{u,v}_{\Delta x}
        =
        \Delta x\sum_i u_i v_i,
        \qquad
        \norm*{u}_{\Delta x}^2
        =
        \inner*{u,u}_{\Delta x},
        \\
        \inner*{Y,Z}_{F,\Delta x}
        =
        \Delta x\sum_i\sum_j Y_{ij}Z_{ij},
        \qquad
        \norm*{Y}_{F,\Delta x}^2
        =
        \inner*{Y,Y}_{F,\Delta x}.
    \end{gathered}
\]

\subsection{Energy stability}

The energy stability analysis is restricted to the constant-coefficient case.
In this setting, $\sigma^s$, $\sigma^a$, $\alpha_1$, $\alpha_2$, and $\beta$ are scalars, and $L_\beta=D_x^+(\beta D_x^-)$.

\begin{proposition} \label{prop:theory-fullrank-energy-stability}
    In the one-dimensional periodic source-free constant-coefficient setting, let $F^{n+1}$ be obtained from $F^n$ by one step of the full-rank FD-SL scheme with time step $\Delta t > 0$. Then
    \[
        \norm*{F^{n+1}M}_{F,\Delta x}^2 \le \norm*{F^n M}_{F,\Delta x}^2.
    \]
\end{proposition}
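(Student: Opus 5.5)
The plan is to split one time step into its macroscopic solve \eqref{eq:macroscopic-linear-system} and its backward-Euler microscopic solve, and to reduce energy decay to a single bound on the provisional density,
\begin{equation*}
\norm*{\rho^{n+1,*}}_{\Delta x}^2 \le E^n := \norm*{F^nM}_{F,\Delta x}^2 = \sum_j w_j \norm*{f_j^n}_{\Delta x}^2 .
\end{equation*}

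\emph{Microscopic step.} Take the $\inner*{\cdot,\cdot}_{\Delta x}$ product of the $f_j^{n+1}$ equation with $2\Delta t\, w_j f_j^{n+1}$ and sum over $j$. The identity $2\inner*{a-b,a}=\norm*{a}^2-\norm*{b}^2+\norm*{a-b}^2$ controls the time difference. On the periodic grid, $v_j\inner*{D_x^{\mathrm{up},(j)}f,f}_{\Delta x}\ge0$, so the transport term can be dropped, and the absorption term is nonpositive. For the scattering term,
\[
\sum_j w_j\inner*{\rho^{*}-f_j^{n+1},f_j^{n+1}} = \inner*{\rho^{*},\rho^{n+1}}-E^{n+1}.
\]
With $a=2\Delta t\sigma^s/\varepsilon^2$ and Cauchy--Schwarz together with Jensen ($\norm*{\rho^{n+1}}\le \sqrt{E^{n+1}}$), this gives
\[
(1+a)E^{n+1}\le E^n + a\norm*{\rho^{*}}\sqrt{E^{n+1}} .
\]
If $\norm*{\rho^*}^2\le E^n$, then the quadratic inequality in $x=\sqrt{E^{n+1}}$ forces $x\le\sqrt{E^n}$, because equality holds at $x=\sqrt{E^n}$ and the left side grows faster. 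So everything reduces to the displayed bound on $\rho^{n+1,*}$.

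\emph{Macroscopic step via Fourier analysis.} All operators are constant-coefficient and periodic, so they are circulant and I diagonalize them with discrete Fourier modes; Parseval then reduces the bound to a per-mode inequality. For a mode $k$ write $s=\sin(k\Delta x/2)$ and $\lambda=4s^2/\Delta x^2$. The symbol of $I-\Delta tL_\beta+\Delta t\sigma^a$ is
\[
D_k=1+\Delta t\beta\lambda+\Delta t\sigma^a .
\]
The symbol of $\mathcal{B}_j D_x^{\mathrm{up},(j)}$ has modulus at most $\sqrt\lambda$, since linear interpolation is a convex combination of shifts. Decompose $\hat f_j=\hat\rho+\hat g_j$ with $\sum_jw_j\hat g_j=0$, so that $\sum_j w_j|\hat f_j|^2=|\hat\rho|^2+\sum_jw_j|\hat g_j|^2$. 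The right-hand side becomes
\[
\hat\rho-c\sum_j w_jv_j\hat b_j\hat g_j,\qquad c=\Delta t\alpha_1/\varepsilon .
\]
By Cauchy--Schwarz in the weighted $(1+N_v)$-dimensional sense, its modulus squared is at most
\[
\Bigl(1+c^2\sum_j w_jv_j^2|\hat b_j|^2\Bigr)\Bigl(|\hat\rho|^2+\sum_jw_j|\hat g_j|^2\Bigr).
\]
Here I use $\sum_jw_jv_j^2=1/3$, i.e.\ the quadrature is exact for $v^2$ (Gauss--Legendre is). It therefore suffices to show
\[
\frac{c^2\lambda}{3}\le D_k^2-1 .
\]
Since $D_k^2-1\ge 2\Delta t\beta\lambda$, it is enough that $\Delta t\alpha_1^2/\varepsilon^2\le 2\alpha_2/\sigma^s$.

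\emph{The scalar inequality (main obstacle).} This last inequality in the exponential coefficients is where I expect the real work. For $\sigma^a=0$, set $x=\sigma^s\Delta t/\varepsilon^2$. It reads $xe^{-2x}\le 2(1-e^{-x})$, which follows from $1-e^{-x}\ge xe^{-x}$. With absorption the factor $(\sigma^s/(\mu\varepsilon^2))^2$ in $\alpha_2$ weakens the right side. To compensate, I would first try to keep the extra $2\Delta t\sigma^a+(\Delta t\sigma^a)^2$ contribution in $D_k^2-1$, or else redo the estimate with $\mu$ in place of $\sigma^s/\varepsilon^2$, using $1-e^{-\mu\Delta t}\ge \mu\Delta t\,e^{-\mu\Delta t}$. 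A secondary point to verify is that the $s$-dependence cancels uniformly in $k$, which the choice of $\lambda$ above ensures.
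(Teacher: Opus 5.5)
\paragraph{Correct for $\sigma^a=0$, by a different route.}
For $\sigma^a=0$ your argument is correct. The paper never diagonalizes. It tests the macroscopic solve with $\rho^{n+1,*}$ and bounds the flux derivative in a dual norm, $\norm{T^{-1/2}\mathcal{J}^n}_{\Delta x}^2\le\frac{1}{3\beta}\norm{G^n}_{F,\Delta x}^2$ with $T=-L_\beta+\sigma^a I$, using summation by parts; this is the physical-space version of your symbol bound $|\hat b_j|\le\sqrt{\lambda}$. It keeps the full damping in the microscopic step, $\norm{Y^{n+1}}_{F,\Delta x}\le(1+\mu\Delta t)^{-1}\norm{Y^n+\mu^s\Delta t\,\rho^{n+1,*}(M\bm{1})^\top}_{F,\Delta x}$ with $\mu^s=\sigma^s/\varepsilon^2$. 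It then checks that the coefficients of $\norm{\rho^n}^2$ and $\norm{G^n}^2$ are at most one.

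Your quadratic-inequality reduction to a single bound on $\norm{\rho^{n+1,*}}$ and your per-mode computation are more transparent, but they rest entirely on circulant structure. Your argument also goes through unchanged whenever your sufficient condition holds. Writing $\kappa:=\frac{\Delta t\alpha_1^2}{3\varepsilon^2\beta}=\frac{\mu}{\mu^s}\psi(\mu\Delta t)$ with $\psi(z)=\frac{ze^{-z}}{e^z-1}\le 1$, that condition is $\kappa\le2$. In particular it holds whenever $\sigma^a\varepsilon^2\le\sigma^s$.

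\paragraph{The gap: strong absorption.}
The statement includes constant $\sigma^a>0$. Because you discard the absorption term in the microscopic estimate, you need $\norm{\rho^{n+1,*}}_{\Delta x}^2\le E^n$. That inequality is false in general, so neither remedy in your last paragraph can succeed: both try to prove it on the macroscopic side.

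To see this, note $c^2\lambda/3=\kappa\,\Delta t\beta\lambda$. Hence the supremum over $\lambda\ge0$ of $\frac{c^2\lambda/3}{(1+\Delta t\sigma^a+\Delta t\beta\lambda)^2}$ equals $\frac{\kappa}{4(1+\Delta t\sigma^a)}$, and it is attained on a fine enough mesh. Your Cauchy--Schwarz step is essentially an equality for a single mode with $k\Delta x\ll1$, $\hat\rho=0$, and $\hat g_j$ equal to $\overline{v_j\hat b_j}$ minus its weighted mean.

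For example, take $\varepsilon=\sigma^s=1$, $\sigma^a=10$, $\Delta t=10^{-2}$. Then $\kappa\approx 9.3$, and with $\Delta x\le10^{-4}$ the mode $k\approx620$ gives $\norm{\rho^{n+1,*}}_{\Delta x}^2\approx 2E^n$. More generally, for $\sigma^a\varepsilon^2\gg\sigma^s$ and small $\mu\Delta t$ one has $\kappa\approx\mu/\mu^s\gg1$.

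\paragraph{The repair: keep absorption in the microscopic step.}
With the absorption term retained, your test gives
\[
(1+2\Delta t\mu)E^{n+1}\le E^n+2\Delta t\mu^s\norm{\rho^{n+1,*}}_{\Delta x}\sqrt{E^{n+1}}.
\]
The same quadratic argument, with $a$ replaced by $2\Delta t\mu$, yields $E^{n+1}\le E^n$ as soon as $\norm{\rho^{n+1,*}}_{\Delta x}\le\frac{\mu}{\mu^s}\sqrt{E^n}$.

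Your Fourier bound delivers this relaxed target. Since $(\mu/\mu^s)^2D_k^2\ge 1+2(\mu/\mu^s)^2\Delta t\beta\lambda$, it suffices that $\kappa\le 2(\mu/\mu^s)^2$, i.e.\ $\psi(\mu\Delta t)\le 2\mu/\mu^s$, which is immediate. The factor $\mu/\mu^s$ is exactly what the $(1+\mu\Delta t)^{-2}$ damping absorbs in the paper's microscopic lemma.
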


For SL-DLR(full), the macroscopic flux derivative is kept unchanged, while the full-rank update is replaced by the BUG integrator.

\begin{proposition} \label{prop:theory-lowrank-full-energy-stability}
    Under the setting of Proposition~\ref{prop:theory-fullrank-energy-stability},
    let $Y_{\mathrm{SL\text{-}DLR(full)}}^{n+1}$ be obtained from $Y^n=X^nS^n(V^n)^\top$ by one step of SL-DLR(full) with time step $\Delta t > 0$. Then
    \[
        \norm*{
            Y_{\mathrm{SL\text{-}DLR(full)}}^{n+1}
        }_{F,\Delta x}^2
        \le
        \norm*{Y^n}_{F,\Delta x}^2.
    \]
\end{proposition}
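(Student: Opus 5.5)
The plan is to reduce the statement to the energy argument behind Proposition~\ref{prop:theory-fullrank-energy-stability}. The key observation is that the BUG S-step satisfies \emph{exactly the same tested energy identity} as the full-rank backward Euler microscopic step. In SL-DLR(full) the macroscopic stage is literally unchanged: $\mathcal{J}^n$ is assembled from $F^n=Y^nM^{-1}$ by the full quadrature, so $\rho^{n+1,*}$ is the same function of $Y^n$ as in the full-rank scheme. The K- and L-steps only serve to produce the orthonormal bases $X^{n+1},V^{n+1}$. Their particular form is irrelevant for stability, so I will not use any property of them beyond orthonormality of the resulting bases.

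First I will write the S-step in matrix form. Let $\mathcal{P}(Z)=X^{n+1}(X^{n+1})^\top Z V^{n+1}(V^{n+1})^\top$, which is the orthogonal projection (in $\inner*{\cdot,\cdot}_{F,\Delta x}$) onto the space $\{X^{n+1}CV^{n+1\top}\}$. Multiplying the S-step by $X^{n+1}$ on the left and $(V^{n+1})^\top$ on the right gives
\[
    Y^{n+1}-\mathcal{P}(Y^n)=\Delta t\,\mathcal{P}\bigl(\mathcal{G}(Y^{n+1};\rho^{n+1,*})\bigr).
\]
I then take the $\inner*{\cdot,\cdot}_{F,\Delta x}$ inner product with $Y^{n+1}$. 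Since $Y^{n+1}$ lies in the range of $\mathcal{P}$ and $\mathcal{P}$ is self-adjoint, $\inner*{Y^{n+1},\mathcal{P}Z}_{F,\Delta x}=\inner*{Y^{n+1},Z}_{F,\Delta x}$ for every $Z$. Hence
\[
    \inner*{Y^{n+1},Y^{n+1}-Y^n}_{F,\Delta x}
    =\Delta t\,\inner*{Y^{n+1},\mathcal{G}(Y^{n+1};\rho^{n+1,*})}_{F,\Delta x}.
\]
This is precisely the identity obtained in the full-rank case by testing the column-wise backward Euler update, written for $Y=FM$ as $Y^{n+1}-Y^n=\Delta t\,\mathcal{G}(Y^{n+1};\rho^{n+1,*})$, with $Y^{n+1}$. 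All the individual terms of $\inner*{Y^{n+1},\mathcal{G}(Y^{n+1};\rho^{n+1,*})}_{F,\Delta x}$ are therefore expressed as in the full-rank proof:
\begin{itemize}
    \item The upwind terms $-\frac1\varepsilon\inner*{Y,D_x^-YQ^++D_x^+YQ^-}_{F,\Delta x}$ are nonpositive under periodicity.
    \item The absorption term is $-\sigma^a\norm*{Y^{n+1}}^2_{F,\Delta x}$.
    \item The coupling term reduces, via $\inner*{Y^{n+1},\rho^{n+1,*}(M\bm 1)^\top}_{F,\Delta x}=\inner*{Y^{n+1}M\bm 1,\rho^{n+1,*}}_{\Delta x}=\inner*{\rho^{n+1},\rho^{n+1,*}}_{\Delta x}$, to the same expression in $\rho^{n+1}$ and $\rho^{n+1,*}$ as before.
\end{itemize}
Combined with $2\inner*{Y^{n+1},Y^{n+1}-Y^n}=\norm*{Y^{n+1}}^2-\norm*{Y^n}^2+\norm*{Y^{n+1}-Y^n}^2$, we obtain the same energy relation as for the full-rank scheme.

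It remains to close the estimate exactly as in the proof of Proposition~\ref{prop:theory-fullrank-energy-stability}. That proof controls the coupling term $\inner*{\rho^{n+1},\rho^{n+1,*}}$, together with the relaxation dissipation, using the macroscopic system~\eqref{eq:macroscopic-linear-system}. The latter involves only $\rho^n$, $\mathcal{J}^n$ (hence only $Y^n$) and $\rho^{n+1,*}$, and these quantities are identical in SL-DLR(full). The main point to verify, and the step I expect to be the only real obstacle, is that the full-rank argument uses the microscopic update \emph{only} through the tested identity above, and never through a column-wise or pointwise form of the update that fails for the projected equation. For instance, it must not rely on an explicit solve for $f_j^{n+1}$ or on $\rho^{n+1}$ satisfying an averaged equation. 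If the full-rank proof also uses an identity obtained by testing with $\rho^{n+1,*}(M\bm1)^\top$ or with a Fourier mode, I will check that this test function can be replaced by its projection. Alternatively, I will rewrite the argument in Fourier variables only for the macroscopic part, where nothing changes, keeping the microscopic part in the tested-energy form. With that verified, the inequality $\norm*{Y^{n+1}_{\mathrm{SL\text{-}DLR(full)}}}^2_{F,\Delta x}\le\norm*{Y^n}^2_{F,\Delta x}$ follows verbatim.
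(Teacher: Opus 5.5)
Your proposal is correct and follows essentially the same route as the paper. Testing the projected S-step with $Y^{n+1}_{\mathrm{SL\text{-}DLR(full)}}$ removes $P^X(\cdot)P^V$, because the solution lies in $\{X^{n+1}S(V^{n+1})^\top\}$; this recovers exactly the full-rank tested inequality \eqref{eq:theory-energy-micro-pre-split}, and the macroscopic closure is unchanged since $\rho^{n+1,*}$ and $\mathcal{J}^n$ depend only on $Y^n$. The obstacle you flag does not arise: the paper's full-rank microscopic estimate (Lemma~\ref{lem:app-energy-micro}) uses the update only through this single test with $Y^{n+1}$, followed by Cauchy--Schwarz.
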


The proofs of Propositions~\ref{prop:theory-fullrank-energy-stability} and
\ref{prop:theory-lowrank-full-energy-stability} are given in
Appendix~\ref{app:energy-stability}.

\subsection{Sampling error}

We first compare the full-quadrature flux derivative $\mathcal{J}^n$ with the sampled flux derivative $\widetilde{\mathcal{J}}^n$,
and then propagate
$\eta_n := \norm*{\widetilde{\mathcal{J}}^n-\mathcal{J}^n}_{\Delta x}$
through the macroscopic density solve.

\subsubsection{Kinetic regime}

In the kinetic regime $\varepsilon=\mathcal{O}(1)$, the sampling error is mainly caused by the velocity-dependent characteristic backtracking in the flux derivative.

\begin{lemma}\label{lem:theory-transport-flux-error}
    Assume that the sampled weights satisfy the exactness condition
    \eqref{eq:sampling-exactness-condition}, and that
    $\norm*{\widetilde w^n}_1\le C_{\widetilde w}$,
    where $C_{\widetilde w}$ is independent of
    $\varepsilon$, $\Delta x$, and $\Delta t$.
    Then
    \begin{equation}\label{eq:theory-transport-eta}
        \eta_n
        \le
        (1+C_{\widetilde w})
        \left(
        \Delta x\sum_{i=1}^{N_x}
        \inf_{\xi \in\mathcal{Z}^n}\norm*{h_i^n-\xi }_\infty^2
        \right)^{1/2}.
    \end{equation}
\end{lemma}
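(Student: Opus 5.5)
The plan is a pointwise-in-space quadrature error estimate of Lebesgue-constant type, summed in the discrete norm. Fix a grid index $i$ and write $\mathcal{J}_i^n-\widetilde{\mathcal{J}}_i^n = w^\top h_i^n - (\widetilde w^n)^\top P_n^\top h_i^n$. Take an arbitrary $\xi\in\mathcal{Z}^n$. The exactness condition \eqref{eq:sampling-exactness-condition} gives $w^\top\xi=(\widetilde w^n)^\top P_n^\top\xi$, so subtracting this zero quantity yields
\[
    \mathcal{J}_i^n-\widetilde{\mathcal{J}}_i^n
    =
    w^\top(h_i^n-\xi)-(\widetilde w^n)^\top P_n^\top(h_i^n-\xi).
\]

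Next I would bound each term by H\"older's inequality in the $\ell^1$--$\ell^\infty$ pairing. Since the quadrature weights are normalized and nonnegative, $\norm*{w}_1=\sum_j w_j=1$. Since $P_n^\top$ only selects entries, $\norm*{P_n^\top(h_i^n-\xi)}_\infty\le\norm*{h_i^n-\xi}_\infty$. Together these give
\[
    \bigl|\mathcal{J}_i^n-\widetilde{\mathcal{J}}_i^n\bigr|
    \le
    \bigl(\norm*{w}_1+\norm*{\widetilde w^n}_1\bigr)\norm*{h_i^n-\xi}_\infty
    \le
    (1+C_{\widetilde w})\norm*{h_i^n-\xi}_\infty .
\]
Because $\xi\in\mathcal{Z}^n$ is arbitrary and the left side does not depend on $\xi$, we may take the infimum over $\mathcal{Z}^n$. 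The infimum is attained since $\mathcal{Z}^n$ is finite dimensional, though attainment is not needed.

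Finally, I would square the pointwise bound, multiply by $\Delta x$, sum over $i=1,\dots,N_x$, and take square roots. By the definition $\eta_n=\norm*{\widetilde{\mathcal J}^n-\mathcal J^n}_{\Delta x}$, this yields \eqref{eq:theory-transport-eta}. The best approximation $\xi$ may be chosen separately for each $i$, which is why the infimum sits inside the sum.

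There is no serious obstacle here. The only point needing care is that the exactness condition must hold for every element of $\mathcal{Z}^n$, not just the columns of $Z^n$. This follows by linearity. If $\widetilde w^n$ comes from the nonnegative least-squares fallback, exactness may fail, and the lemma's hypothesis explicitly excludes that case. The assumption $w_j\ge0$ is implicit in the normalized quadrature and is what gives $\norm*{w}_1=1$.
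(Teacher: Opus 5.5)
Your proposal is correct and follows the paper's proof essentially line by line. You subtract the exactness identity for an arbitrary $\xi\in\mathcal{Z}^n$, bound both quadrature terms by the $\ell^1$--$\ell^\infty$ H\"older inequality using $\norm*{w}_1=1$ and $\norm*{\widetilde w^n}_1\le C_{\widetilde w}$, then take the infimum separately for each $i$ and sum in $\norm*{\,\cdot\,}_{\Delta x}$. Your side remarks are accurate and only make explicit what the paper leaves implicit: nonnegativity of $w$ is what gives $\norm*{w}_1=1$, and the NNLS fallback may break exactness.
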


\begin{proof}
    For any $\xi \in\mathcal{Z}^n$, the exactness condition
    \eqref{eq:sampling-exactness-condition} gives $w^\top \xi =  (\widetilde w^n)^\top P_n^\top \xi$.
    Therefore,
    \[
        \left|\widetilde{\mathcal{J}}_i^n-\mathcal{J}_i^n\right|
        =
        \left|
        (\widetilde w^n)^\top P_n^\top(h_i^n-\xi)
        -
        w^\top(h_i^n-\xi)
        \right|
        \le
        (1+C_{\widetilde w})\norm*{h_i^n-\xi}_\infty.
    \]
    Taking the infimum over $\xi \in\mathcal{Z}^n$ and then summing over $i$ in
    $\norm*{\,\cdot\,}_{\Delta x}$ gives \eqref{eq:theory-transport-eta}.
\end{proof}

Estimate \eqref{eq:theory-transport-eta} reduces the sampling error to the approximation of $h_i^n$ in the target space. We identify its leading angular structure by a Taylor expansion.
For an integer $p\ge0$, define the backtracking regularity measure
\[
    \Theta_p^n
    :=
    \left(
    \Delta x\sum_{i=1}^{N_x}
    \left[
        \max_j
        \left|
        |v_j|^{p+1}
        \partial_x^{p+1}(f-\rho)(x_i,v_j,t_n)
        \right|
        \right]^2
    \right)^{1/2}.
\]
For smooth data, the FD-SL derivative reconstruction and interpolation are first-order accurate.
Thus, up to an $\mathcal{O}(\Delta x)$ contribution, $h_i^n$ has the same leading angular structure as the smooth backtracked vector
\[
    h_{i,c}^n(v_j)
    :=
    v_j\partial_x(f-\rho)
    \left(x_i-\frac{v_j\Delta t}{\varepsilon},v_j,t_n\right).
\]
A Taylor expansion at $x_i$ gives
\[
    h_{i,c}^n(v_j)
    =
    v_j\partial_x(f-\rho)(x_i,v_j,t_n)
    -
    \frac{\Delta t}{\varepsilon}
    v_j^2\partial_{xx}(f-\rho)(x_i,v_j,t_n)
    +
    \mathcal{O}\!\left(\frac{\Delta t^2}{\varepsilon^2}\right).
\]
The zeroth-order angular vector
\[
    \bar{h}_{i,c}^n
    :=
    \bigl(v_j\partial_x(f-\rho)(x_i,v_j,t_n)\bigr)_{j=1}^{N_v}
    \in
    \spanop(Q\bm{1},QM^{-1}V^n)
    \subset
    \mathcal{Z}_{(1)}^n.
\]
Thus the zeroth-order term is contained in the default target space.
The remaining contribution comes from the first backtracking correction and from the first-order FD-SL reconstruction error.
Applying Lemma~\ref{lem:theory-transport-flux-error} with
$\mathcal{Z}^n=\mathcal{Z}_{(1)}^n$ yields
\begin{equation}\label{eq:theory-transport-eta-discrete}
    \eta_n
    \le
    (1+C_{\widetilde w})
    \left(
    \frac{\Delta t}{\varepsilon}
    \Theta_1^n
    +
    C\left(\frac{\Delta t^2}{\varepsilon^2}+\Delta x\right)
    \right),
\end{equation}
where $C$ depends on the relevant higher spatial smoothness bounds, but not on
$\Delta t$, $\Delta x$, or $\varepsilon$.
If the enlarged target space $\mathcal{Z}^n=\mathcal{Z}_{(2)}^n$ is used, the first backtracking correction is also represented in the target space, and the same estimate gives
\[
    \eta_n
    \le
    (1+C_{\widetilde w})
    \left(
    \frac{\Delta t^2}{2\varepsilon^2}
    \Theta_2^n
    +
    C\left(\frac{\Delta t^3}{\varepsilon^3}+\Delta x\right)
    \right).
\]

\subsubsection{Diffusive limit}

In the diffusive limit $\varepsilon\to0$, the sampling error in the backtracked semi-Lagrangian flux derivative is not necessarily small.
The key point is that this perturbation enters the macroscopic equation through the prefactor $\diag(\alpha_1)/\varepsilon$.
Since $\norm{\alpha_1}_{\infty} / \varepsilon \to 0$ as $\varepsilon\to0$,
the effect of any uniformly bounded sampling error on the macroscopic density is exponentially suppressed in the diffusive limit.

\begin{lemma}\label{lem:theory-small-eps-sampling-bounded}
    Assume that the discrete nonequilibrium part and the effective quadrature weights satisfy
    \[
        \max_{1\le j\le N_v} \norm*{f_j^n-\rho^n}_\infty \le C_g,
        \qquad
        \norm*{\widetilde{w}^n}_1 \le C_{\widetilde{w}},
    \]
    where $C_g$ and $C_{\widetilde{w}}$ are independent of $\varepsilon$, $\Delta t$, and $\Delta x$.
    Then the sampling error satisfies
    \[
        \eta_n
        =
        \norm*{\widetilde{\mathcal{J}}^n-\mathcal{J}^n}_{\Delta x}
        \le
        \frac{C_\eta}{\Delta x},
    \]
    where $C_\eta$ is independent of $\varepsilon$, $\Delta t$, and $\Delta x$.
\end{lemma}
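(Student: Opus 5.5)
We bound $\widetilde{\mathcal J}^n$ and $\mathcal J^n$ separately in the maximum norm and then pass to the discrete $L^2$ norm. The bound is crude: it uses no approximation property of the target space $\mathcal Z^n$, only the boundedness of the nonequilibrium part and of the weights. That is exactly what is needed later, because the resulting $\mathcal O(1/\Delta x)$ error is multiplied by $\norm{\alpha_1}_\infty/\varepsilon$.

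First we bound each backtracked derivative vector $b_j^n=\mathcal B_j\bigl(D_x^{\mathrm{up},(j)}(f_j^n-\rho^n)\bigr)$.
\begin{itemize}
\item Both one-sided difference operators satisfy $\norm{D_x^\pm u}_\infty\le 2\norm{u}_\infty/\Delta x$.
\item The interpolation operator $\mathcal B_j=(1-\lambda_j)S_{m_j}+\lambda_jS_{m_j+1}$ is a convex combination of periodic shifts with $0\le\lambda_j<1$, so $\norm{\mathcal B_j u}_\infty\le\norm{u}_\infty$.
\end{itemize}
Combining these with the hypothesis on $f_j^n-\rho^n$ gives
\[
\norm{b_j^n}_\infty\le \frac{2C_g}{\Delta x},
\]
uniformly in $j$, $\varepsilon$ and $\Delta t$. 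The characteristic shift $m_j$ may be arbitrarily large when $\varepsilon\to0$. This does not matter, because shifts are isometries in the maximum norm under periodicity. This is the step to check carefully: no constant may depend on $\Delta t/\varepsilon$.

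Next, since $|v_j|\le1$, the angular vectors $h_i^n=(v_j(b_j^n)_i)_j$ satisfy $\norm{h_i^n}_\infty\le 2C_g/\Delta x$ for every $i$. The full quadrature has normalized weights, which we take nonnegative as for the standard rules used here, so $\norm{w}_1=1$. Therefore
\[
|\mathcal J_i^n|=|w^\top h_i^n|\le \frac{2C_g}{\Delta x},
\qquad
|\widetilde{\mathcal J}_i^n|=|(\widetilde w^n)^\top P_n^\top h_i^n|\le \norm{\widetilde w^n}_1\norm{h_i^n}_\infty\le \frac{2C_gC_{\widetilde w}}{\Delta x},
\]
where the second bound uses $\norm{P_n^\top h}_\infty\le\norm{h}_\infty$. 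By the triangle inequality,
\[
|\widetilde{\mathcal J}_i^n-\mathcal J_i^n|\le \frac{2C_g(1+C_{\widetilde w})}{\Delta x}.
\]
This is the same structure as in Lemma~\ref{lem:theory-transport-flux-error} with $\xi=0$.

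Finally we sum over the grid. On the periodic domain, $\Delta x\sum_{i=1}^{N_x}1=x_R-x_L$, so
\[
\eta_n\le \sqrt{x_R-x_L}\,\frac{2C_g(1+C_{\widetilde w})}{\Delta x}.
\]
This proves the lemma with $C_\eta=2\sqrt{x_R-x_L}\,C_g(1+C_{\widetilde w})$, which is independent of $\varepsilon$, $\Delta t$ and $\Delta x$.
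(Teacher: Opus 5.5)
Your proposal is correct and matches the paper's proof step for step. You bound each backtracked derivative by $2C_g/\Delta x$ in the maximum norm, using the one-sided differences and the convex-combination interpolation; you bound $\mathcal J^n$ and $\widetilde{\mathcal J}^n$ separately via $\norm{w}_1=1$ and $\norm{\widetilde w^n}_1\le C_{\widetilde w}$; and you pass to $\norm{\cdot}_{\Delta x}$ with the factor $\sqrt{x_R-x_L}$, obtaining the same constant $C_\eta=2\sqrt{x_R-x_L}\,C_g(1+C_{\widetilde w})$. Your remarks that the shifts are max-norm isometries for any $m_j$ and that $\norm{w}_1=1$ needs nonnegative weights make explicit two points the paper uses implicitly.
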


\begin{proof}
    The backtracked flux derivative is obtained by applying one-sided difference quotients to $f_j^n-\rho^n$ and then evaluating the result by linear interpolation.
    Thus,
    \[
        \norm*{
        \mathcal{B}_j(D_x^{\mathrm{up},(j)}(f_j^n-\rho^n))
        }_\infty
        \le
        \frac{2C_g}{\Delta x}.
    \]
    Using $|v_j|\le1$, $\norm*{w}_1 = \sum_j w_j = 1$ and $\norm*{\widetilde{w}^n}_1 \le C_{\widetilde{w}}$, we obtain
    \[
        \norm*{\mathcal{J}^n}_\infty
        \le
        \frac{2C_g}{\Delta x}
        \sum_{j=1}^{N_v}w_j|v_j|
        \le
        \frac{2C_g}{\Delta x},
        \quad
        \norm*{\widetilde{\mathcal{J}}^n}_\infty
        \le
        \frac{2C_g}{\Delta x}
        \sum_{k=1}^{m_n}|\widetilde w_k^n|\,|v_{i_k}|
        \le
        \frac{2C_g C_{\widetilde w}}{\Delta x}.
    \]
    Therefore,
    \[
        \norm*{\widetilde{\mathcal{J}}^n-\mathcal{J}^n}_{\Delta x}
        \le
        \sqrt{x_R-x_L}\,\norm*{\widetilde{\mathcal{J}}^n-\mathcal{J}^n}_\infty
        \le
        \frac{2\sqrt{x_R-x_L}\,C_g(1+C_{\widetilde w})}{\Delta x}.
    \]
    This gives the desired bound.
\end{proof}

\subsubsection{Effect on the macroscopic density solve}

The sampling error in the flux derivative enters the implicit macroscopic density solve only through the right-hand side.
Let $\rho_{\mathrm{SL\text{-}DLR(full)}}^{n+1,*}$ and $\rho_{\mathrm{SL\text{-}DLR}}^{n+1,*}$ denote the provisional densities obtained from \eqref{eq:macroscopic-linear-system} and \eqref{eq:sampled-macroscopic-linear-system},
respectively.
Then their difference satisfies
\begin{equation}\label{eq:theory-delta-rho-perturbation}
    A_\rho
    (\rho_{\mathrm{SL\text{-}DLR}}^{n+1,*}
    -\rho_{\mathrm{SL\text{-}DLR(full)}}^{n+1,*})
    =
    -\frac{\Delta t}{\varepsilon}\diag(\alpha_1)
    (\widetilde{\mathcal{J}}^n-\mathcal{J}^n),
\end{equation}
where $A_\rho=I-\Delta t L_\beta+\Delta t\diag(\sigma^a)$.

\begin{lemma}\label{lem:theory-bounded-inverse-rescaled-op}
    The operator $A_\rho$ is invertible and satisfies $\norm*{A_\rho^{-1}}_{\Delta x\to\Delta x}\le 1$.
\end{lemma}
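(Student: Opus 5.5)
The plan is an energy argument in the discrete inner product $\inner{\cdot,\cdot}_{\Delta x}$. We show that $A_\rho$ is coercive with constant one, i.e. $\inner{A_\rho u,u}_{\Delta x}\ge\norm{u}_{\Delta x}^2$ for every grid function $u$. Invertibility and the norm bound then both follow. We work in the periodic setting of this section, and the argument also covers variable coefficients provided $\beta_i\ge0$ and $\sigma^a_i\ge0$.

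The first step is to split
\[
\inner{A_\rho u,u}_{\Delta x}=\norm{u}_{\Delta x}^2-\Delta t\inner{L_\beta u,u}_{\Delta x}+\Delta t\,\Delta x\sum_i\sigma^a_i u_i^2 .
\]
The last term is nonnegative because $\sigma^a\ge0$. For the middle term we use summation by parts on the periodic grid. Writing $(L_\beta u)_i=\Delta x^{-2}\bigl(\beta_{i+1/2}(u_{i+1}-u_i)-\beta_{i-1/2}(u_i-u_{i-1})\bigr)$ and shifting the index in the second sum gives
\[
-\inner{L_\beta u,u}_{\Delta x}=\frac{1}{\Delta x}\sum_i\beta_{i+1/2}(u_{i+1}-u_i)^2\ge0 .
\]
This uses $\beta_{i+1/2}=\tfrac12(\beta_i+\beta_{i+1})\ge0$, which holds since $\alpha_{2,i}\ge0$ (because $1-e^{-\mu_i\Delta t}\ge0$) and $\sigma^s_i>0$. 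In the constant-coefficient case the same computation reads $\inner{D_x^+\beta D_x^-u,u}=-\beta\norm{D_x^-u}^2$. Combining the three pieces gives $\inner{A_\rho u,u}_{\Delta x}\ge\norm{u}_{\Delta x}^2$.

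The conclusion then follows by Cauchy--Schwarz: $\norm{u}_{\Delta x}^2\le\inner{A_\rho u,u}_{\Delta x}\le\norm{A_\rho u}_{\Delta x}\norm{u}_{\Delta x}$, so $\norm{u}_{\Delta x}\le\norm{A_\rho u}_{\Delta x}$. Hence $A_\rho$ is injective, and therefore invertible since it is a square matrix. Setting $u=A_\rho^{-1}g$ yields $\norm{A_\rho^{-1}g}_{\Delta x}\le\norm{g}_{\Delta x}$.

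The only step that needs care is the summation by parts: the periodic index shift must be done correctly so that no boundary terms appear. The sign facts $\alpha_2\ge0$ and $\sigma^a\ge0$ are immediate from the definitions. If non-periodic boundary rows were used, we would instead need to check that they preserve negative semidefiniteness of $L_\beta$. Here we stay in the periodic setting stated at the start of the section, so this issue does not arise.
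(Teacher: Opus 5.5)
Your proposal is correct and follows the paper's own argument. Periodic summation by parts, with $\beta_{i+1/2}\ge0$ and $\sigma^a\ge0$, gives $\inner{A_\rho u,u}_{\Delta x}\ge\norm{u}_{\Delta x}^2$; Cauchy--Schwarz then gives $\norm{A_\rho u}_{\Delta x}\ge\norm{u}_{\Delta x}$, which yields both invertibility and the bound. You additionally write out the summation-by-parts identity and verify $\alpha_{2,i}\ge0$, steps the paper leaves implicit.
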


\begin{proof}
    By periodicity and $\beta_{i+\frac12}\ge0$, $\sigma_i^a\ge0$, we have
    \[
        \inner*{A_\rho u,u}_{\Delta x}
        =
        \norm*{u}_{\Delta x}^2
        +
        \Delta t\inner*{-L_\beta u,u}_{\Delta x}
        +
        \Delta t\inner*{\diag(\sigma^a)u,u}_{\Delta x}
        \ge
        \norm*{u}_{\Delta x}^2.
    \]
    Hence, by Cauchy--Schwarz, $\norm*{A_\rho u}_{\Delta x}\ge\norm*{u}_{\Delta x}$ for all $u$.
    This implies that $A_\rho$ is invertible and $\norm*{A_\rho^{-1}}_{\Delta x\to\Delta x}\le 1$.
\end{proof}

Combining the perturbation equation for the macroscopic solve with the preceding flux-error estimates gives the following one-step bound.

\begin{proposition} \label{prop:theory-delta-rho-bound}
    The one-step perturbation caused by replacing $\mathcal{J}^n$ with $\widetilde{\mathcal{J}}^n$ satisfies
    \begin{equation}\label{eq:theory-delta-rho-basic-bound}
        \norm*{\rho_{\mathrm{SL\text{-}DLR}}^{n+1,*}
            -\rho_{\mathrm{SL\text{-}DLR(full)}}^{n+1,*}}_{\Delta x}
        \le
        \frac{\Delta t\norm*{\alpha_1}_\infty}{\varepsilon}\eta_n.
    \end{equation}
    Moreover:
    \begin{enumerate}
        \item In the kinetic regime, if the assumptions leading to
              \eqref{eq:theory-transport-eta-discrete} hold, then
              \begin{equation}\label{eq:theory-delta-rho-kinetic}
                  \begin{aligned}
                       & \norm*{\rho_{\mathrm{SL\text{-}DLR}}^{n+1,*}
                          -\rho_{\mathrm{SL\text{-}DLR(full)}}^{n+1,*}}_{\Delta x}
                      \\
                       & \le{}
                      \frac{\Delta t\norm*{\alpha_1}_\infty}{\varepsilon}
                      (1+C_{\widetilde w})
                      \left(
                      \frac{\Delta t}{\varepsilon}
                      \Theta_{1}^n
                      +
                      C\left(\frac{\Delta t^2}{\varepsilon^2}+\Delta x\right)
                      \right).
                  \end{aligned}
              \end{equation}
              In particular, for $\varepsilon=\mathcal{O}(1)$ and sufficiently smooth solutions, the leading macroscopic perturbation is $\mathcal{O}(\Delta t^2+\Delta t\,\Delta x)$.

        \item In the diffusive limit, if the assumptions of Lemma~\ref{lem:theory-small-eps-sampling-bounded} hold, then, for fixed $\Delta t$ and mesh,
              \begin{equation}\label{eq:theory-delta-rho-diffusive}
                  \norm*{\rho_{\mathrm{SL\text{-}DLR}}^{n+1,*} -\rho_{\mathrm{SL\text{-}DLR(full)}}^{n+1,*}}_{\Delta x} \le \frac{C_\eta \Delta t}{\Delta x} \frac{\norm*{\alpha_1}_\infty}{\varepsilon} \to 0, \qquad \varepsilon\to0.
              \end{equation}
              Thus, the sampled-flux perturbation does not affect the diffusive limit.
    \end{enumerate}
\end{proposition}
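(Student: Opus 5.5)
The plan is to derive everything from the perturbation identity \eqref{eq:theory-delta-rho-perturbation} together with Lemma~\ref{lem:theory-bounded-inverse-rescaled-op}. Both provisional densities are computed from the same $\rho^n$ and the same source, and with the same matrix $A_\rho$. Their difference therefore solves a linear system whose right-hand side is $-\frac{\Delta t}{\varepsilon}\diag(\alpha_1)(\widetilde{\mathcal{J}}^n-\mathcal{J}^n)$.

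Applying $A_\rho^{-1}$ and the bound $\norm*{A_\rho^{-1}}_{\Delta x\to\Delta x}\le1$ reduces the left-hand side of \eqref{eq:theory-delta-rho-basic-bound} to $\frac{\Delta t}{\varepsilon}\norm*{\diag(\alpha_1)(\widetilde{\mathcal{J}}^n-\mathcal{J}^n)}_{\Delta x}$. Since $\diag(\alpha_1)$ acts pointwise, $\norm*{\diag(\alpha_1)u}_{\Delta x}\le\norm*{\alpha_1}_\infty\norm*{u}_{\Delta x}$. Combining these with the definition of $\eta_n$ gives \eqref{eq:theory-delta-rho-basic-bound} directly.

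For item 1, I will substitute the kinetic-regime flux estimate \eqref{eq:theory-transport-eta-discrete}, which comes from Lemma~\ref{lem:theory-transport-flux-error} with the default target $\mathcal{Z}^n_{(1)}$, into \eqref{eq:theory-delta-rho-basic-bound}. This gives \eqref{eq:theory-delta-rho-kinetic} immediately. For the order statement, take $\varepsilon=\mathcal{O}(1)$ and note:
\begin{itemize}
\item $\norm*{\alpha_1}_\infty\le1$, since every $\mu_i>0$ and $\Delta t>0$;
\item $\Theta_1^n$ and $C$ are bounded for smooth solutions.
\end{itemize}
The bracket is then $\mathcal{O}(\Delta t+\Delta t^2+\Delta x)$. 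Multiplying by the prefactor $\Delta t\norm*{\alpha_1}_\infty/\varepsilon=\mathcal{O}(\Delta t)$ yields $\mathcal{O}(\Delta t^2+\Delta t\,\Delta x)$, with the $\Delta t^3$ term absorbed.

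For item 2, I will instead insert Lemma~\ref{lem:theory-small-eps-sampling-bounded}, $\eta_n\le C_\eta/\Delta x$, into \eqref{eq:theory-delta-rho-basic-bound}, which gives \eqref{eq:theory-delta-rho-diffusive}. It remains to show $\norm*{\alpha_1}_\infty/\varepsilon\to0$ for fixed $\Delta t$. Since $\sigma_i^a\ge0$ and $\sigma_i^s\ge\sigma^s_{\min}$, we have $\mu_i\ge\sigma^s_{\min}/\varepsilon^2$, and hence
\[
\frac{\norm*{\alpha_1}_\infty}{\varepsilon}\le\frac{1}{\varepsilon}e^{-\sigma^s_{\min}\Delta t/\varepsilon^2}\to0 .
\]
The limit holds because exponential decay in $\varepsilon^{-2}$ dominates the factor $\varepsilon^{-1}$. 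The only point needing care is that $C_\eta$ and the mesh are independent of $\varepsilon$; this is exactly what the hypotheses of Lemma~\ref{lem:theory-small-eps-sampling-bounded} provide.

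I expect no real obstacle. The least mechanical part is the bookkeeping in item 1, specifically confirming that the implicit constants in \eqref{eq:theory-transport-eta-discrete} are uniform, so that the stated order is legitimate. That uniformity is assumed through those hypotheses.
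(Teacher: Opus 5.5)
Your proposal is correct and follows the paper's proof: invert $A_\rho$ in the perturbation identity using Lemma~\ref{lem:theory-bounded-inverse-rescaled-op}, then insert \eqref{eq:theory-transport-eta-discrete} for the kinetic case, and Lemma~\ref{lem:theory-small-eps-sampling-bounded} together with $\norm*{\alpha_1}_\infty/\varepsilon\to0$ for the diffusive case. Your pointwise estimate $\norm*{\diag(\alpha_1)u}_{\Delta x}\le\norm*{\alpha_1}_\infty\norm*{u}_{\Delta x}$ and your explicit bound $\norm*{\alpha_1}_\infty/\varepsilon\le\varepsilon^{-1}e^{-\sigma^s_{\min}\Delta t/\varepsilon^2}$ spell out steps that the paper leaves implicit.
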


\begin{proof}
    Applying $A_\rho^{-1}$ to \eqref{eq:theory-delta-rho-perturbation} and using Lemma~\ref{lem:theory-bounded-inverse-rescaled-op} give \eqref{eq:theory-delta-rho-basic-bound}.
    The kinetic-regime estimate \eqref{eq:theory-delta-rho-kinetic} follows from \eqref{eq:theory-delta-rho-basic-bound} and \eqref{eq:theory-transport-eta-discrete}.
    The diffusive-limit estimate \eqref{eq:theory-delta-rho-diffusive} follows from \eqref{eq:theory-delta-rho-basic-bound}, Lemma~\ref{lem:theory-small-eps-sampling-bounded}, and $\norm*{\alpha_1}_\infty/\varepsilon\to0$.
\end{proof}

\subsection{Asymptotic-preserving property}

The full-rank FD-SL scheme is AP \cite{zhang2023asymptotic}.
We show that the proposed low-rank schemes preserve the same diffusive limit.

\begin{proposition}\label{prop:theory-lowrank-ap}
    Consider one time step of the low-rank FD-SL schemes in the source-free case.
    Assume that $\Delta t$ and the mesh are fixed, and that the recovered density satisfies $\rho^n\ge0$ and $\norm*{\rho^n}_{\Delta x} > 0$.
    Assume further that the sampled flux derivative satisfies the boundedness condition in Lemma~\ref{lem:theory-small-eps-sampling-bounded}.
    Then both SL-DLR(full) and SL-DLR are AP.
    More precisely,
    if $\rho^{n+1}_0$ denotes the limiting diffusion update, then, as $\varepsilon\to0$,
    \[
        Y_{\mathrm{SL\text{-}DLR(full)}}^{n+1} = \rho^{n+1}_0 (M\bm{1})^\top + o(1).
    \]
    Moreover, SL-DLR has the same limiting diffusion equation as SL-DLR(full).
\end{proposition}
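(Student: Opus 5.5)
We track the limit $\varepsilon\to0$ through the three stages of one step: macroscopic solve, BUG microscopic update, and density reset. Throughout, $\Delta t$ and the mesh are fixed.

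\textbf{Macroscopic solve.} Since $\mu_i\ge\sigma^s_{\min}/\varepsilon^2$, we have
\[
\norm*{\alpha_1}_\infty/\varepsilon\le \varepsilon^{-1}e^{-\sigma^s_{\min}\Delta t/\varepsilon^2}\to0
\quad\text{and}\quad
\alpha_2\to1,\qquad \beta\to 1/(3\sigma^s).
\]
The full-quadrature flux $\mathcal J^n$ is bounded by $2C_g/\Delta x$, exactly as in the proof of Lemma~\ref{lem:theory-small-eps-sampling-bounded}. Hence the right-hand side of \eqref{eq:macroscopic-linear-system} converges to $\rho^n$. By Lemma~\ref{lem:theory-bounded-inverse-rescaled-op}, the inverse $A_\rho^{-1}$ is uniformly bounded and converges to $A_{\rho,0}^{-1}$, where $A_{\rho,0}=I-\Delta t D_x(\tfrac{1}{3\sigma^s}D_x)+\Delta t\diag(\sigma^a)$. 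Therefore
\[
\rho^{n+1,*}\to\rho^{n+1}_0:=A_{\rho,0}^{-1}\rho^n,
\]
which is the limiting implicit diffusion update. For SL-DLR, Proposition~\ref{prop:theory-delta-rho-bound}, estimate \eqref{eq:theory-delta-rho-diffusive}, shows that the sampled provisional density differs from the full one by $o(1)$. So it has the same limit, and every subsequent argument applies verbatim.

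\textbf{BUG update.} Write $E:=\rho^{n+1,*}(M\bm 1)^\top$, a rank-one matrix. Multiplying each implicit step by $\varepsilon^2$ gives, for instance for the K-step,
\[
\varepsilon^2(K^{n+1}-K^n)/\Delta t=-\varepsilon(\cdots)+\diag(\sigma^s)(EV^n-K^{n+1})-\varepsilon^2\diag(\sigma^a)K^{n+1}.
\]
The scaled K-, L- and S-step systems are invertible with uniformly bounded inverses; this follows from the same coercivity argument as Lemma~\ref{lem:theory-bounded-inverse-rescaled-op}, using nonnegativity of the upwind part. We therefore obtain
\[
K^{n+1}=EV^n+o(1),\qquad L^{n+1}=E^\top X^n+o(1),
\]
so that $K^{n+1}\to\rho^{n+1}_0(M\bm1)^\top V^n$ and $L^{n+1}\to M\bm1(\rho^{n+1}_0)^\top X^n$.

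\textbf{Equilibrium in the updated bases (main obstacle).} We must show that $\mathcal{P}_{X^{n+1}}E V^{n+1}(V^{n+1})^\top=E+o(1)$. This holds only if $\rho^{n+1}_0\in\operatorname{range}(X^{n+1})$ and $M\bm1\in\operatorname{range}(V^{n+1})$ in the limit. The first plan, fixed-rank BUG, gives $\operatorname{range}(K^{n+1})$, which contains $\rho_0^{n+1}\,((M\bm1)^\top V^n)$. This is a nonzero vector provided $(V^n)^\top M\bm1\neq0$. That in turn follows from the hypothesis: $\rho^n=X^nS^n(V^n)^\top M\bm1$ and $\norm{\rho^n}_{\Delta x}>0$ force $(V^n)^\top M\bm1\ne0$. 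Similarly, $\operatorname{range}(L^{n+1})$ contains $M\bm1\,((\rho_0^{n+1})^\top X^n)$. This requires $(X^n)^\top\rho_0^{n+1}\neq0$, and here we use $\rho^n\ge0$ and $\rho^n\neq0$. Since $A_{\rho,0}$ is an M-matrix, $\rho_0^{n+1}>0$ componentwise. Also, $\rho^n\in\operatorname{range}(X^n)$ and $\langle\rho^n,\rho_0^{n+1}\rangle>0$, so $(X^n)^\top\rho_0^{n+1}\neq0$. Continuity of the QR-range then yields $X^{n+1}(X^{n+1})^\top\rho_0^{n+1}=\rho_0^{n+1}+o(1)$, and the analogous statement holds for $V^{n+1}$. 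The difficulty is that $K^{n+1}$ may lose rank in the limit, so the individual columns of $X^{n+1}$ need not converge. We argue only about projections onto the limiting nonzero direction; this is robust, since the projector onto the range of $\qr$ contains the projector onto any limit direction of the span.

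\textbf{S-step and conclusion.} The scaled S-step gives
\[
S^{n+1}=(X^{n+1})^\top E V^{n+1}+o(1).
\]
Hence
\[
Y^{n+1}=X^{n+1}(X^{n+1})^\top E\,V^{n+1}(V^{n+1})^\top+o(1)=\rho_0^{n+1}(M\bm1)^\top+o(1).
\]
The reset then gives $\rho^{n+1}=Y^{n+1}M\bm1=\rho_0^{n+1}\,\bm1^\top M^2\bm1+o(1)=\rho_0^{n+1}+o(1)$, using $\sum_j w_j=1$. This proves the AP statement for SL-DLR(full). For SL-DLR, the only change is an $o(1)$ perturbation of $\rho^{n+1,*}$, so the same limiting diffusion equation is obtained.
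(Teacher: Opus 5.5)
Your route is the same as the paper's: leading-order balances in the K-, L- and S-steps, nondegeneracy from $\norm*{\rho^n}_{\Delta x}>0$, the new bases capturing $\rho^{n+1,*}$ and $M\bm{1}$, and the S-step reconstruction. However, your L- and S-step limits are correct only for constant $\sigma^s$. The proposition allows $\sigma^s$ to vary in $x$, and so does your own macroscopic step with $D_x(\tfrac{1}{3\sigma^s}D_x)$.

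Projecting the stiff term $\varepsilon^{-2}\diag(\sigma^s)(E-Y)$ produces weighted Gram matrices. The L-step balance is $L^{n+1}(X^n)^\top\diag(\sigma^s)X^n=E^\top\diag(\sigma^s)X^n+o(1)$. The S-step balance is $A_X^{n+1}S^{n+1}=(X^{n+1})^\top\diag(\sigma^s)EV^{n+1}+o(1)$, with $A_X^{n+1}=(X^{n+1})^\top\diag(\sigma^s)X^{n+1}$. Three consequences follow:
\begin{itemize}
\item your identities $L^{n+1}=E^\top X^n+o(1)$ and $S^{n+1}=(X^{n+1})^\top EV^{n+1}+o(1)$ fail in general;
\item the nondegeneracy needed for $M\bm{1}$ to enter $\spanop(V^{n+1})$ is $(X^n)^\top\diag(\sigma^s)\rho_0^{n+1}\neq0$ (the limiting form of the paper's \eqref{eq:theory-rho-nonzero-2}), not $(X^n)^\top\rho_0^{n+1}\neq0$;
\item the final step involves the oblique projector $\Pi^{n+1}=X^{n+1}(A_X^{n+1})^{-1}(X^{n+1})^\top\diag(\sigma^s)$ rather than $X^{n+1}(X^{n+1})^\top$.
\end{itemize}

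All three repairs follow from your own tools.
\begin{itemize}
\item Nondegeneracy: your M-matrix argument gives $\rho_0^{n+1}>0$, hence $\diag(\sigma^s)\rho_0^{n+1}>0$ componentwise. So $\inner*{\rho^n,\diag(\sigma^s)\rho_0^{n+1}}_{\Delta x}>0$, and $\rho^n\in\spanop(X^n)$ then yields the weighted nondegeneracy.
\item Coercivity: your uniform coercivity argument still covers the scaled L- and S-systems, because $X^\top\diag(\sigma^s)X\succeq\sigma^s_{\min}I$ for any $X$ with orthonormal columns.
\item Reconstruction: $\Pi^{n+1}$ is idempotent, fixes $\spanop(X^{n+1})$, and has norm at most $\max_i\sigma^s_i/\sigma^s_{\min}$ uniformly in $\varepsilon$. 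Since $\rho^{n+1,*}$ lies within $o(1)$ of $\spanop(X^{n+1})$, it follows that $\Pi^{n+1}\rho^{n+1,*}=\rho^{n+1,*}+o(1)$. This is exactly the paper's S-step computation.
\end{itemize}

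With these corrections, your proof is more complete than the paper's in three respects. The paper asserts \eqref{eq:theory-rho-nonzero-2} with a bare ``hence'', whereas your positivity argument is where the hypothesis $\rho^n\ge0$ is actually used. Your uniform coercivity bound justifies the ``leading balance'' statements. Your argument through $\spanop(K^{n+1})\subseteq\spanop(X^{n+1})$, rather than through convergence of $X^{n+1}$, correctly handles a rank-deficient limit.
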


\begin{proof}
    Since $\rho^n=X^nS^n(V^n)^\top M\bm{1}$ and $\norm*{\rho^n}_{\Delta x} > 0$, we have
    \begin{equation}\label{eq:theory-rho-nonzero-1}
        (M\bm{1})^\top V^n \ne 0,
    \end{equation}
    and $\rho^n\in\spanop(X^n)$.
    In the diffusive limit, $\rho^{n+1,*}=\rho_0^{n+1}+o(1)$.
    Hence, for sufficiently small $\varepsilon$, $\norm*{\rho^{n+1,*}}_{\Delta x} > 0$ and
    \begin{equation}\label{eq:theory-rho-nonzero-2}
        (X^n)^\top\diag(\sigma^s)\rho^{n+1,*}\ne0.
    \end{equation}

    The leading $\varepsilon^{-2}$ K-step balance and L-step balance give
    \begin{align*}
        K^{n+1}
         & =
        \rho^{n+1,*} (M\bm{1})^\top V^n + o(1),
        \\
        L^{n+1}(X^n)^\top\diag(\sigma^s)X^n
         & =
        M\bm{1}\,(\diag(\sigma^s)\rho^{n+1,*})^\top X^n + o(1).
    \end{align*}
    Together with \eqref{eq:theory-rho-nonzero-1} and \eqref{eq:theory-rho-nonzero-2}, these balances imply
    \[
        \rho^{n+1,*}\in\spanop(X^{n+1}) + o(1),
        \quad
        M\bm{1}\in\spanop(V^{n+1}) + o(1).
    \]
    The leading S-step balance is
    \[
        (X^{n+1})^\top\diag(\sigma^s)X^{n+1}S^{n+1}
        =
        (X^{n+1})^\top\diag(\sigma^s)\rho^{n+1,*} (M\bm{1})^\top V^{n+1} + o(1).
    \]
    Let $A_X^{n+1}=(X^{n+1})^\top\diag(\sigma^s)X^{n+1}$. Then
    \begin{align*}
        Y_{\mathrm{SL\text{-}DLR(full)}}^{n+1}
        ={} &
        X^{n+1}S^{n+1}(V^{n+1})^\top
        \\
        ={} &
        X^{n+1}(A_X^{n+1})^{-1} (X^{n+1})^\top\diag(\sigma^s)\rho^{n+1,*} (M\bm{1})^\top V^{n+1}(V^{n+1})^\top + o(1)
        \\
        ={} &
        \rho^{n+1,*} (M\bm{1})^\top + o(1)
        \\
        ={} &
        \rho^{n+1}_0 (M\bm{1})^\top + o(1).
    \end{align*}
    Thus, SL-DLR(full) inherits the limiting diffusion equation.
    Finally, the transition from SL-DLR(full) to SL-DLR follows from Proposition~\ref{prop:theory-delta-rho-bound}.
\end{proof}

\section{Numerical results}\label{sec:numerical-results}

We now examine the accuracy, asymptotic behavior, and computational cost of the proposed sampled low-rank method.
We compare the full-rank semi-Lagrangian scheme (SL), the sampled low-rank scheme (SL-DLR), and the full-quadrature low-rank reference, denoted by SL-DLR(full). The latter uses the same low-rank evolution as SL-DLR but evaluates the macroscopic flux derivative with all angular directions. Therefore, SL-DLR(full) isolates the fixed-rank approximation error, whereas SL-DLR additionally contains the flux-derivative error introduced by angular sampling. Unless otherwise stated, all experiments use periodic boundary conditions, set $\sigma^a=\Phi=0$, and prescribe a fixed rank throughout each run.
In the multidimensional tests, the angular discretization is the Chebyshev--Legendre product $S_m$ rule with $N_\Omega=2m^2$ directions.
All implicit systems are solved by GMRES, implemented using the MATLAB \texttt{gmres} function. Unless otherwise stated, Jacobi preconditioning and a relative tolerance of $10^{-9}$ are used for all solves.

\subsection{Gaussian initial data in one dimension}\label{ex:1d-gaussian}

We investigate the radiation transport equation in slab geometry in the spatial domain $D_x = [-1.5, 1.5]$
using the Gaussian initial condition
\[
    f(0,x,v)
    =
    \frac{1}{\sqrt{2\pi}\Sigma}
    \exp\left(-\frac{x^2}{2\Sigma^2}\right),
    \quad
    \Sigma^2=9\times10^{-4}.
\]
We set $\sigma^s = 1$ and fix $(N_{v}, N_x) = (200, 500)$ for all regimes.
Figure~\ref{fig:1d-gaussian-density} displays the density profiles for the kinetic ($\varepsilon=1, T=1, \Delta t = 2\Delta x, r=50$), transition ($\varepsilon=10^{-2}, T=0.2, \Delta t = \Delta x, r=8$), and diffusive ($\varepsilon=10^{-6}, T=0.2, \Delta t = \Delta x, r=3$) regimes.
The diffusion-limit reference is also shown in the diffusive regime.
In all three regimes, both SL-DLR(full) and SL-DLR closely match the SL density.
In the kinetic regime $(\varepsilon=1)$, the numerical profiles follow the trend of the analytical solution \cite{ganapol2008analytical}, with the remaining discrepancy mainly reflecting the low-order discretization.
In the diffusive regime $(\varepsilon=10^{-6})$, the low-rank results also agree with the limiting diffusion solution, which is consistent with the AP behavior of the full-rank SL scheme.

\begin{figure}[htbp!]
    \centering
    \begin{subfigure}[b]{0.32\textwidth}
        \centering
        \includegraphics[width=\textwidth]{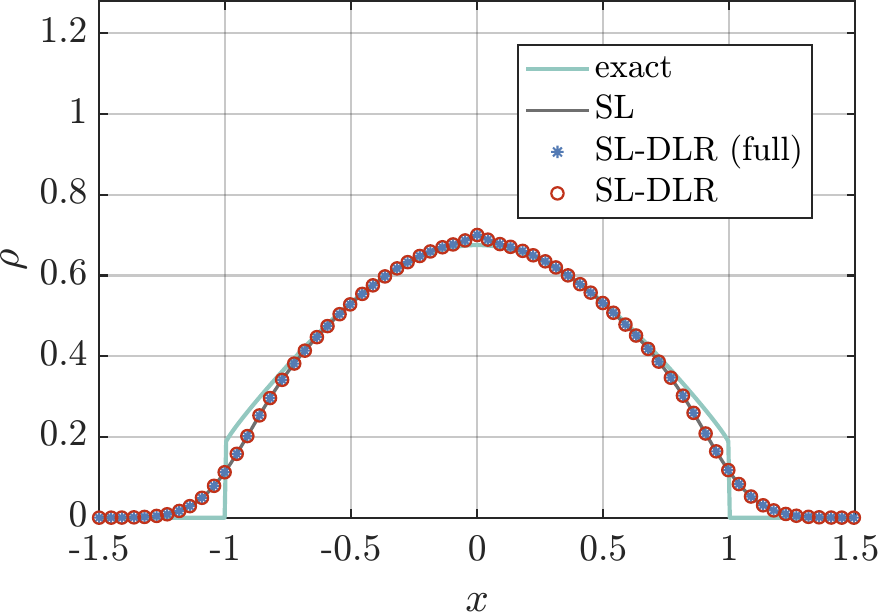}
    \end{subfigure}
    \begin{subfigure}[b]{0.32\textwidth}
        \centering
        \includegraphics[width=\textwidth]{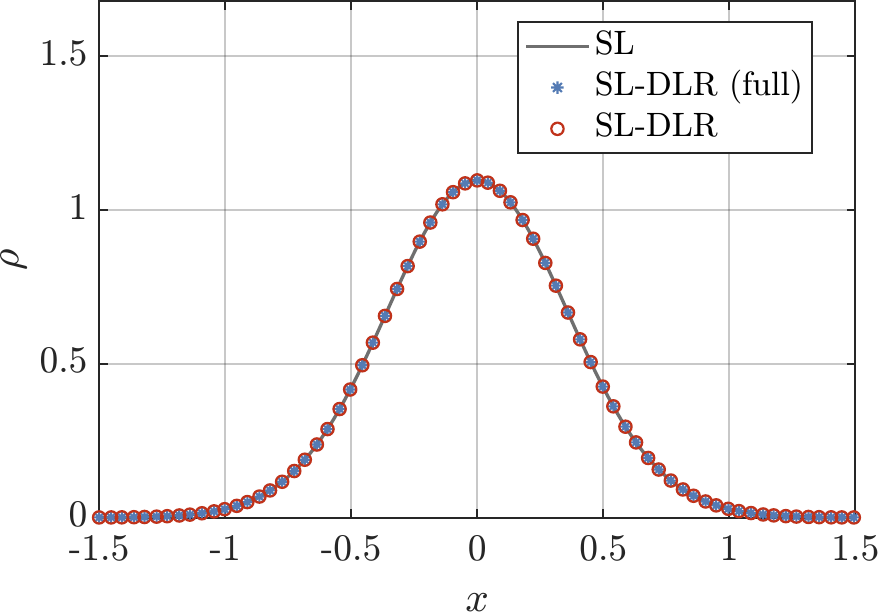}
    \end{subfigure}
    \begin{subfigure}[b]{0.32\textwidth}
        \centering
        \includegraphics[width=\textwidth]{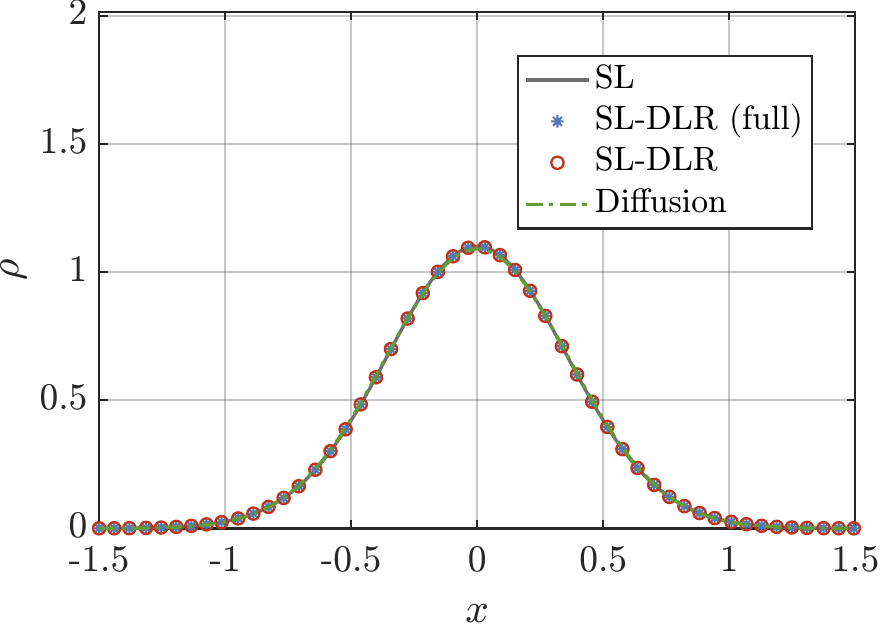}
    \end{subfigure}
    \caption{Example~\ref{ex:1d-gaussian}.
    Density profiles in the kinetic ($\varepsilon=1$), transition ($\varepsilon=10^{-2}$), and diffusive regimes ($\varepsilon=10^{-6}$), shown from left to right. }
    \label{fig:1d-gaussian-density}
\end{figure}

\subsection{One-dimensional non-equilibrium data and flux-derivative sampling error}\label{ex:1d-nonequilibrium}

We consider a non-equilibrium initial condition in the transition regime, where the analysis in Section~\ref{sec:theoretical-analysis} suggests that the sampled macroscopic flux derivative is more sensitive to the angular target space.
This example is used to examine the effect of flux-derivative sampling and target-space enrichment on the density.
We take $\varepsilon=0.1$ and use the initial condition
\[
    f(0,x,v)
    =
    \frac{1}{2\pi\Sigma^2}
    \exp\left(-\frac{x^2}{2\Sigma^2}\right)
    \exp\left(-\frac{(v-1)^2}{2\Sigma^2}\right),
    \quad
    \Sigma^2=9\times10^{-4}.
\]
We set $\sigma^s = 1$ and choose $N_x=200$, $N_v=100$, and $r=6$.
The final time is $T=0.2$, and the time step is $\Delta t=\Delta x$.
We compare SL, SL-DLR(full), SL-DLR with the default target basis $Z^n=Z^n_{(1)}$, and SL-DLR with the enlarged target basis $Z^n=Z^n_{(2)}$ suggested by the backtracking Taylor expansion.
As explained in Section~\ref{sec:sampling}, $Z^n_{(2)}$ includes the leading $Q^2$-weighted backtracking term, thereby reducing the leading sampling contribution predicted by the Taylor expansion.
Figure~\ref{fig:1d-nonequilibrium-zn} shows the density comparison. SL-DLR(full) remains close to SL, indicating that the fixed-rank approximation error is not dominant in this test.
By contrast, the density obtained by the default sampled SL-DLR differs visibly from that of SL-DLR(full), consistent with the sampling-error estimate.
Replacing $Z^n_{(1)}$ by $Z^n_{(2)}$ reduces this difference without reverting to the full angular quadrature.

\begin{figure}[htbp!]
    \centering
    \begin{subfigure}[c]{0.5\textwidth}
        \centering
        \includegraphics[width=\textwidth]{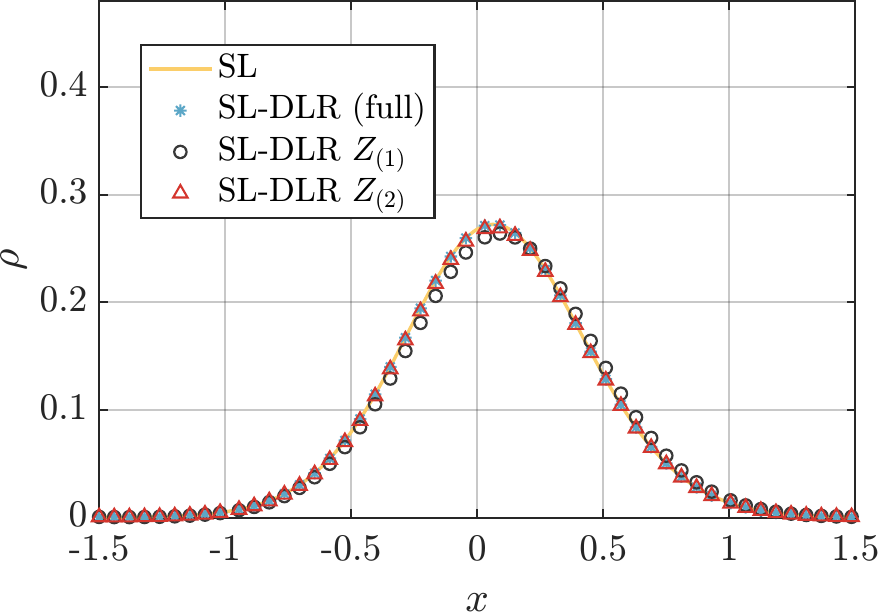}
    \end{subfigure}
    \caption{Example~\ref{ex:1d-nonequilibrium}.
    Density comparison for SL, SL-DLR(full), SL-DLR with default target basis $Z^n_{(1)}$, and SL-DLR with the enlarged target basis $Z^n_{(2)}$.}
    \label{fig:1d-nonequilibrium-zn}
\end{figure}

\subsection{Accuracy and computational cost in two dimensions}\label{ex:2d-accuracy-cost}

We verify the accuracy and computational cost of SL-DLR and SL-DLR(full) in two dimensions using a low-rank manufactured solution of the form
\begin{equation}\label{eq:2d-mms-f}
    f(t,x,y,\theta,\mu)
    =
    2+e^{-t}\sin(2\pi x)\sin(2\pi y)
    \bigl(1+\varepsilon\sin\theta\sqrt{1-\mu^2}\bigr),
\end{equation}
where $(x,y) \in [0,1]^2$.
We set $\sigma^s=1$ in this test.
The source term is obtained by substituting \eqref{eq:2d-mms-f} into the model equation.
Since this source is velocity-dependent, we use the velocity-dependent source treatment described in Remark~\ref{rem:source-velocity-dependent}.
The test is performed for $\varepsilon=1$, $10^{-2}$, and $10^{-6}$.
The spatial mesh is refined as $N_x=N_y=N \in \{16,32,64,128\}$, with the $S_{N/4}$ angular rule and rank $r=8$.
We set $\Delta t=2\Delta x$ and compute to $T=1.0$.
Figure~\ref{fig:2d-order} shows the $L^1$ error in $\rho$ and the average wall time per step.
For all three values of $\varepsilon$, SL and SL-DLR give comparable errors and convergence rates.
Thus, the fixed-rank approximation and QDEIM sampling do not degrade the observed accuracy in this test.
The timing slopes agree with the complexity discussion in Section~\ref{sec:complexity}: along this refinement path, SL and SL-DLR(full) retain the $\mathcal{O}(N^4)$ cost, whereas SL-DLR exhibits the reduced $\mathcal{O}(N^2)$ scaling.

\begin{figure}[htbp!]
    \centering
    \begin{subfigure}[c]{0.32\textwidth}
        \centering
        \includegraphics[width=\textwidth]{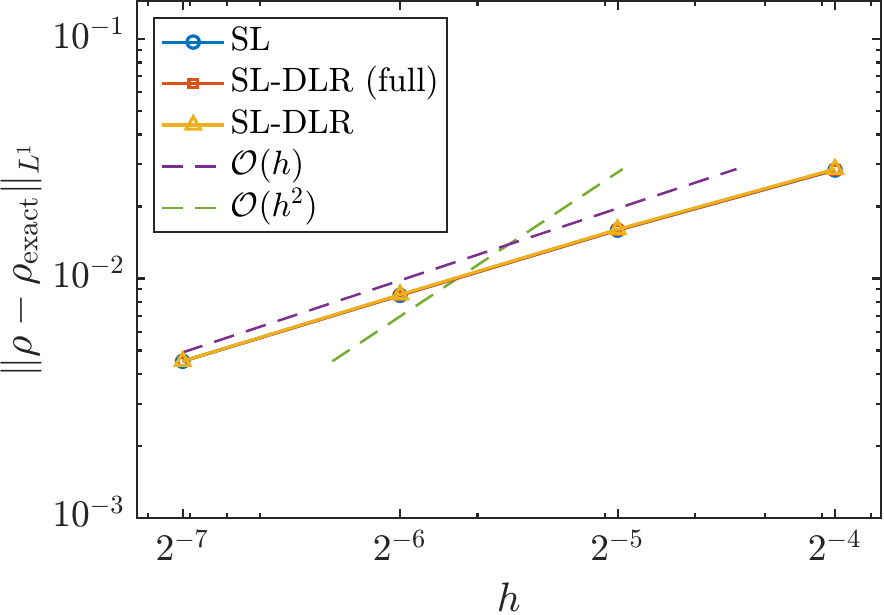}
    \end{subfigure}
    \begin{subfigure}[c]{0.32\textwidth}
        \centering
        \includegraphics[width=\textwidth]{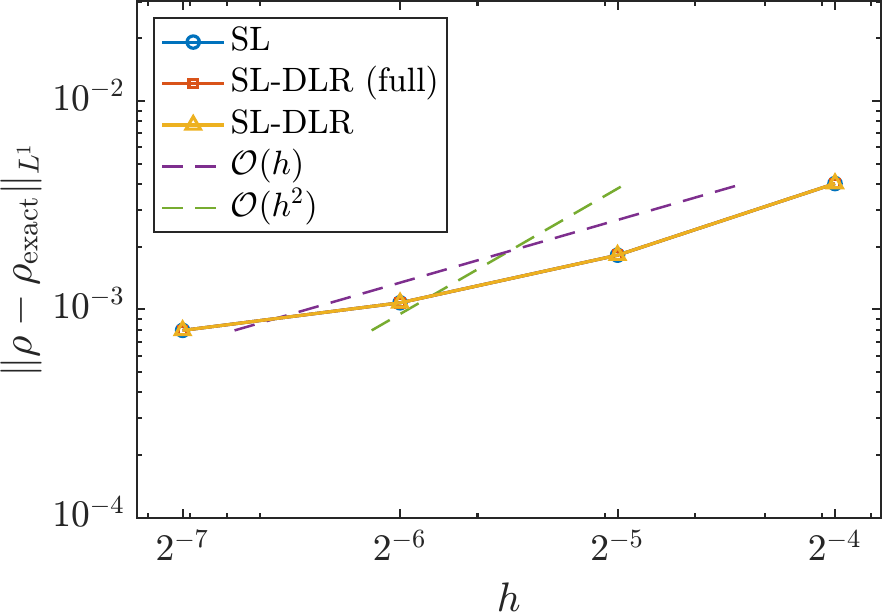}
    \end{subfigure}
    \begin{subfigure}[c]{0.32\textwidth}
        \centering
        \includegraphics[width=\textwidth]{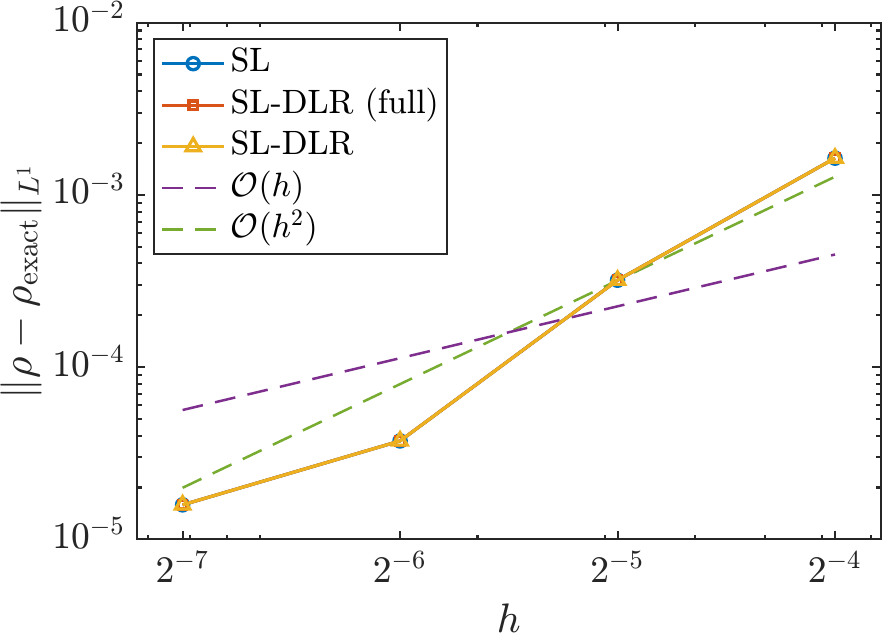}
    \end{subfigure}

    \begin{subfigure}[c]{0.32\textwidth}
        \centering
        \includegraphics[width=\textwidth]{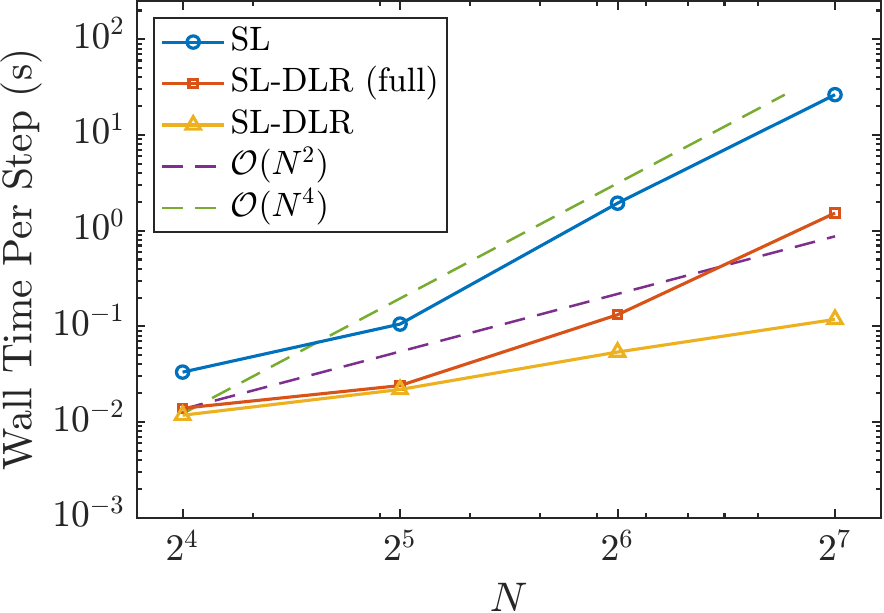}
    \end{subfigure}
    \begin{subfigure}[c]{0.32\textwidth}
        \centering
        \includegraphics[width=\textwidth]{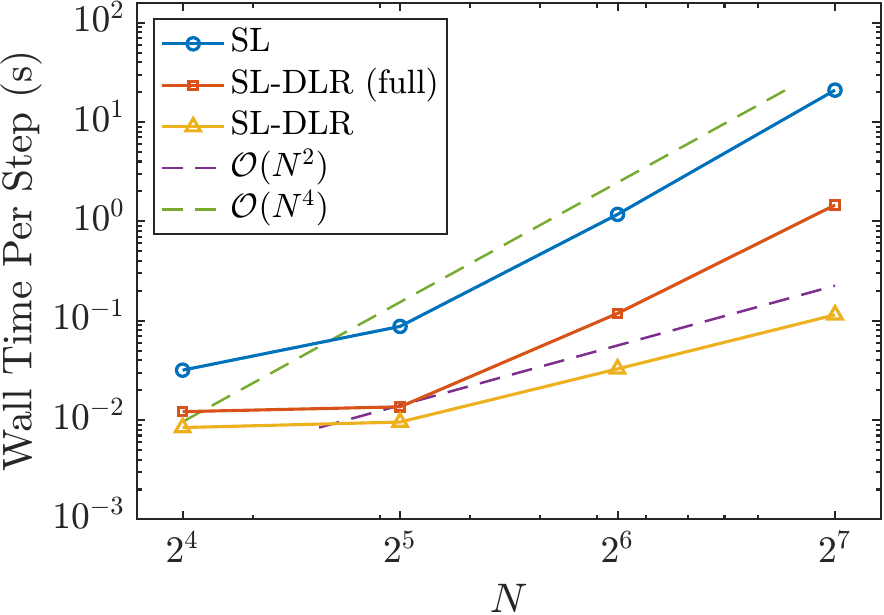}
    \end{subfigure}
    \begin{subfigure}[c]{0.32\textwidth}
        \centering
        \includegraphics[width=\textwidth]{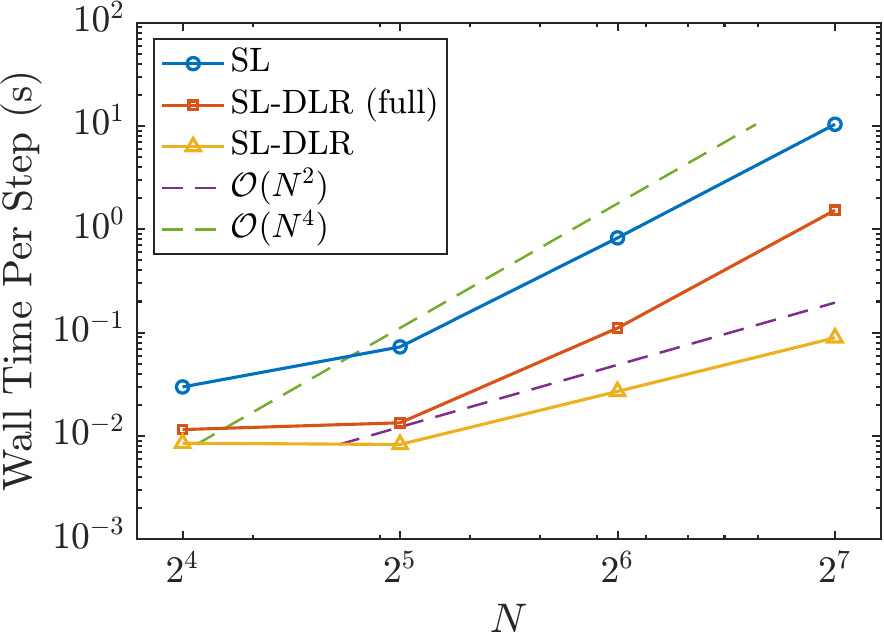}
    \end{subfigure}

    \caption{Example~\ref{ex:2d-accuracy-cost}.
        Top row: $L^1$ density error.
        Bottom row: average wall time per step.
        From left to right, $\varepsilon=1$, $10^{-2}$, and $10^{-6}$.}
    \label{fig:2d-order}
\end{figure}

\subsection{Gaussian initial data in two dimensions}\label{ex:2d-gaussian}

We use a smooth Gaussian initial condition on $[-1,1]^2$:
\[
    f(0,x,y,\bm{\Omega})
    =
    \frac{1}{4\pi\Sigma^2}
    \exp\left(-\frac{x^2+y^2}{4\Sigma^2}\right),
    \quad
    \Sigma^2=10^{-2}.
\]
We set $\sigma^s=1$, take $\varepsilon=10^{-6}$, and compute to $T=0.1$.
The discretization uses $N_x=N_y=64$, the $S_{16}$ angular rule with $N_\Omega=512$, and rank $r=4$.
The time step is $\Delta t=0.01\approx 10\Delta x^2$.
Figure~\ref{fig:2d-gaussian} shows that the low-rank methods are consistent with both the full-rank SL reference and the diffusion-limit solution.
It also shows the angular directions used in SL-DLR at the final time step.

\begin{figure}[htbp!]
    \centering
    \begin{subfigure}[b]{0.32\textwidth}
        \centering
        \includegraphics[width=\textwidth]{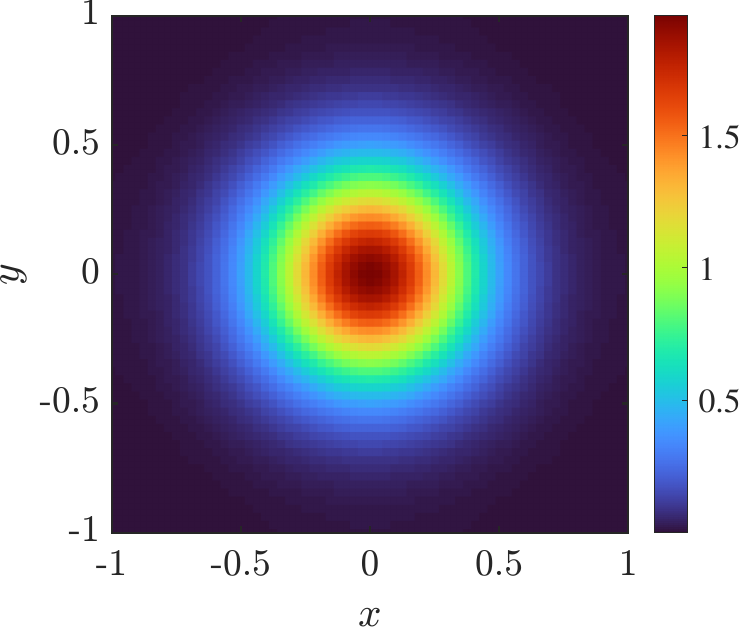}
    \end{subfigure}
    \begin{subfigure}[b]{0.34\textwidth}
        \centering
        \includegraphics[width=\textwidth]{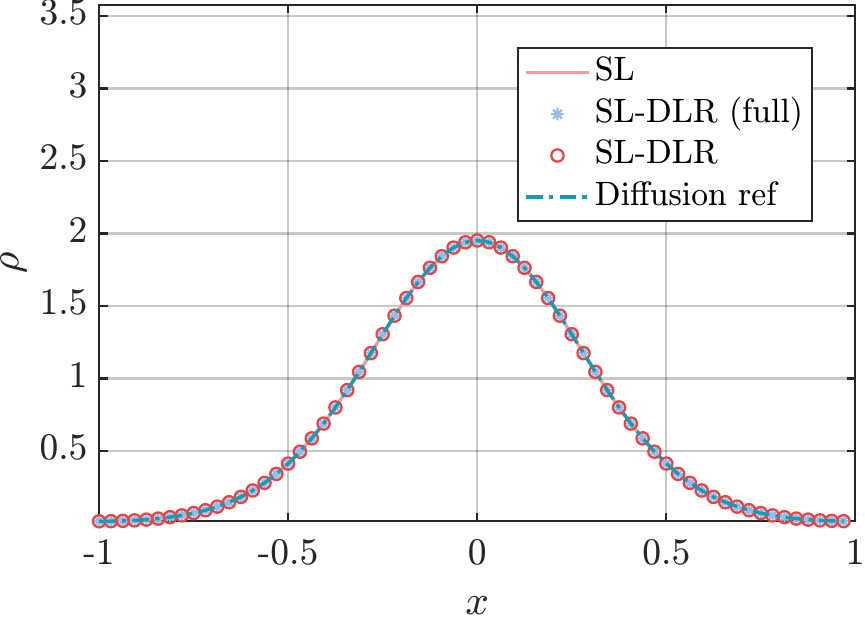}
    \end{subfigure}
    \begin{subfigure}[b]{0.28\textwidth}
        \centering
        \includegraphics[width=\textwidth]{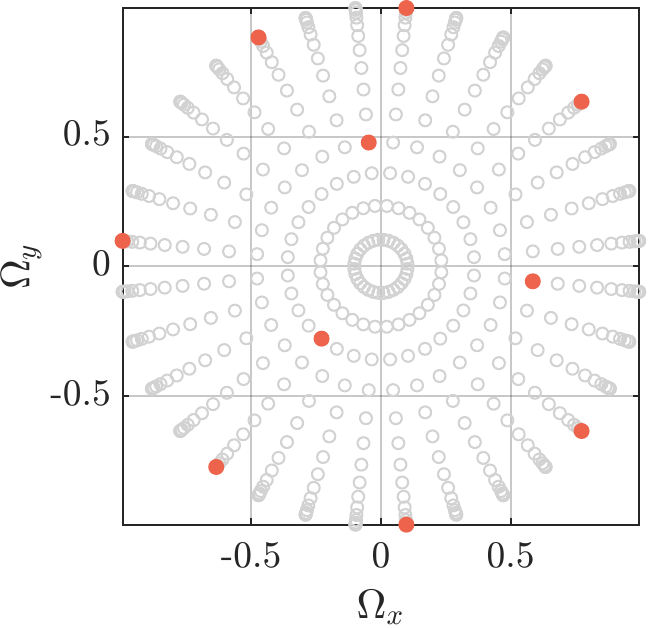}
    \end{subfigure}
    \caption{Example~\ref{ex:2d-gaussian}.
        SL-DLR density (left), density profile along $y=0$ (middle), and sampled directions (right; filled circles: selected, open circles: unselected).
    }
    \label{fig:2d-gaussian}
\end{figure}

\subsection{Two-dimensional Gaussian data with varying scattering}\label{ex:2d-variable-sigma}

We next keep the Gaussian initial condition used in Example~\ref{ex:2d-gaussian} but replace the constant scattering coefficient with a varying radial coefficient:
\[
    \sigma^s(x,y)
    =
    \begin{cases}
        0.999\,c^4(c+\sqrt{2})^2(c-\sqrt{2})^2+0.001,
           & c=\sqrt{x^2+y^2}<1, \\
        1, & \text{otherwise}.
    \end{cases}
\]
We set $\varepsilon=10^{-2}$, for which $\sigma^s/\varepsilon$ ranges from $0.1$ to $100$, as shown in Figure~\ref{fig:2d-variable-sigma-coeff}.
The computation uses $N_x=N_y=128$, the $S_{32}$ angular rule with $N_\Omega=2048$, and rank $r=32$.
We choose the time step $\Delta t=0.01\Delta x$.
Figure~\ref{fig:2d-variable-sigma-density} shows that SL-DLR remains stable with this large time step, and the density profiles along $y=0$ show good agreement with the SL solution.
Table~\ref{tab:2d-variable-sigma-time} reports the cost reduction from the low-rank representation and the additional gain from angular sampling.

\begin{figure}[htbp!]
    \centering
    \begin{subfigure}[b]{0.3\textwidth}
        \centering
        \includegraphics[width=\textwidth]{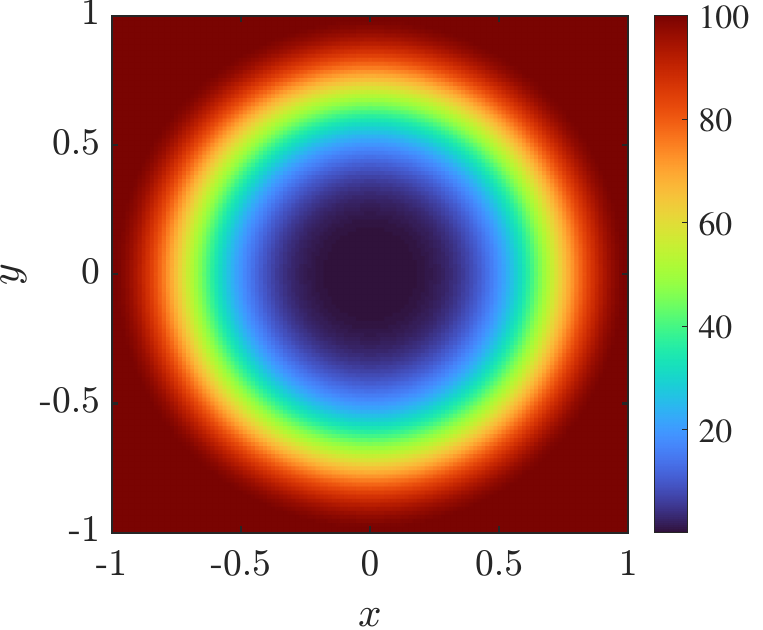}
    \end{subfigure}
    \begin{subfigure}[b]{0.35\textwidth}
        \centering
        \includegraphics[width=\textwidth]{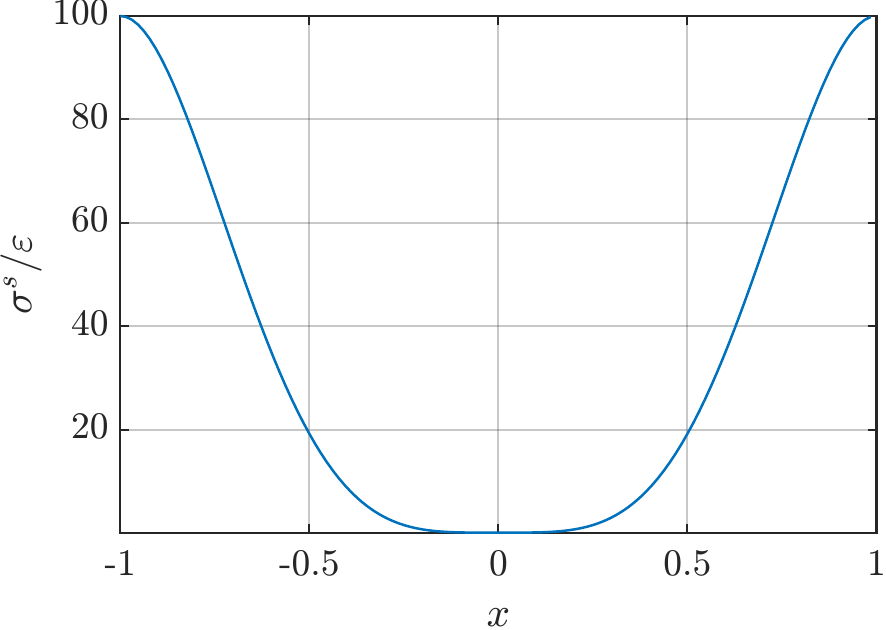}
    \end{subfigure}
    \caption{Example~\ref{ex:2d-variable-sigma}.
        Varying scattering coefficient.
        Plot of $\sigma^s/\varepsilon$ (left) and its profile along $y=0$ (right).
    }
    \label{fig:2d-variable-sigma-coeff}
\end{figure}

\begin{figure}[htbp!]
    \centering
    \begin{subfigure}[c]{0.29\textwidth}
        \centering
        \includegraphics[width=\textwidth]{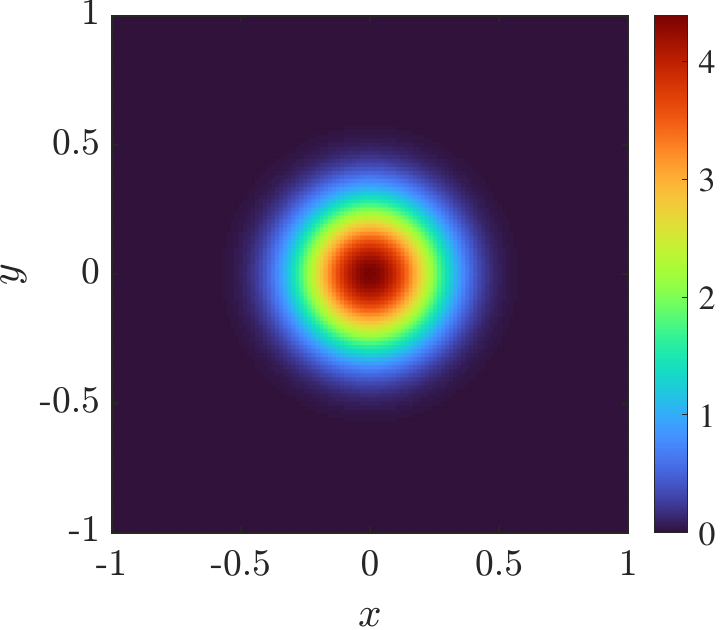}
    \end{subfigure}
    \begin{subfigure}[c]{0.30\textwidth}
        \centering
        \includegraphics[width=\textwidth]{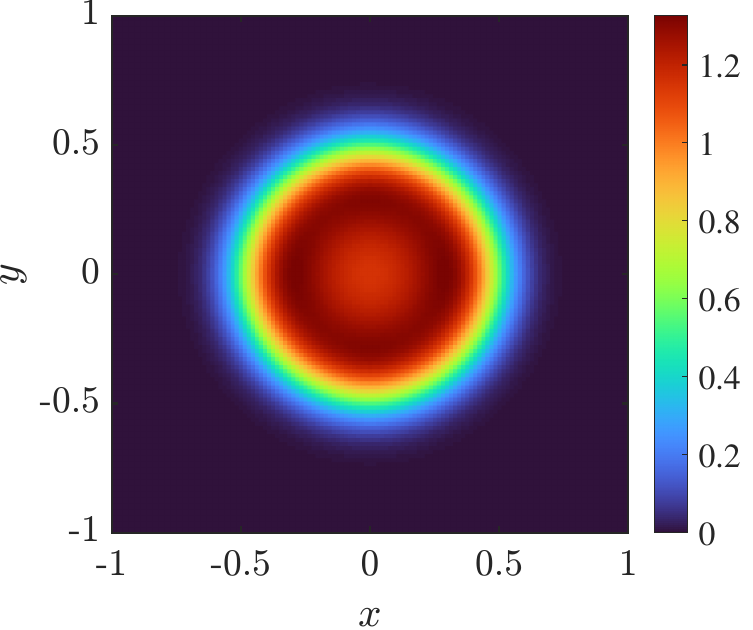}
    \end{subfigure}
    \begin{subfigure}[c]{0.30\textwidth}
        \centering
        \includegraphics[width=\textwidth]{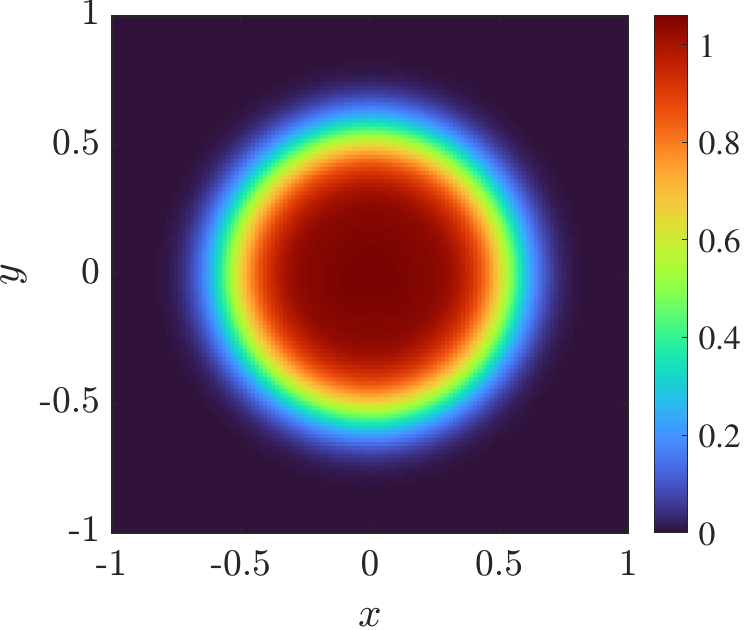}
    \end{subfigure}

    \begin{subfigure}[c]{0.28\textwidth}
        \centering
        \includegraphics[width=\textwidth]{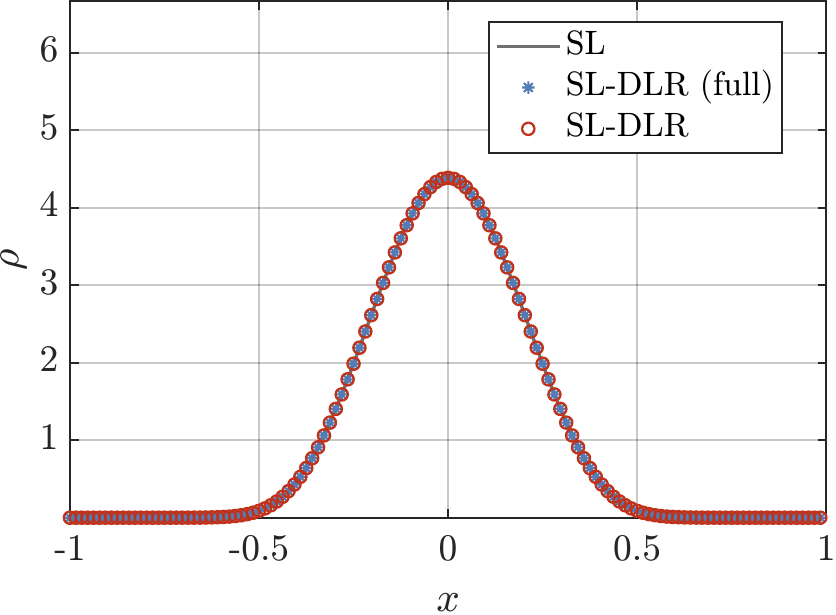}
    \end{subfigure}
    \begin{subfigure}[c]{0.30\textwidth}
        \centering
        \includegraphics[width=\textwidth]{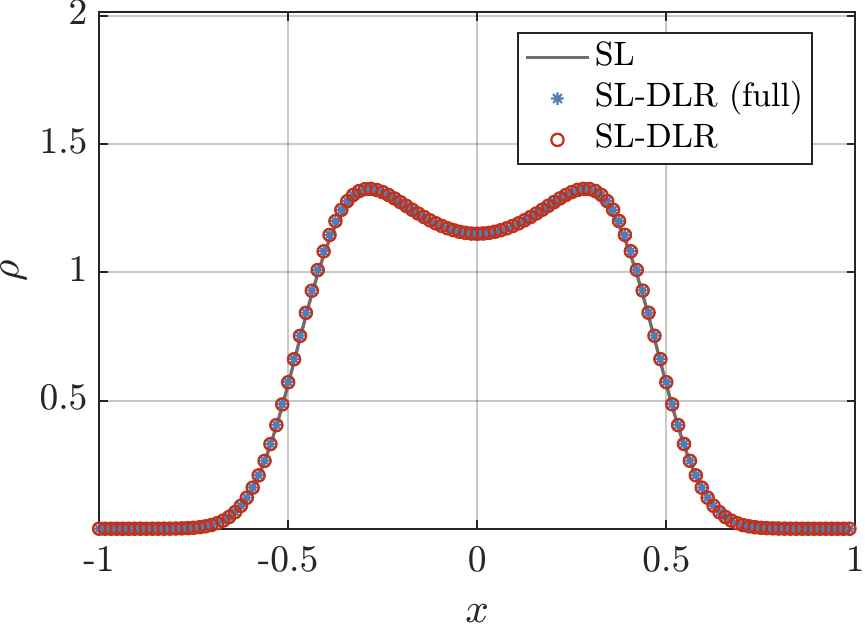}
    \end{subfigure}
    \begin{subfigure}[c]{0.30\textwidth}
        \centering
        \includegraphics[width=\textwidth]{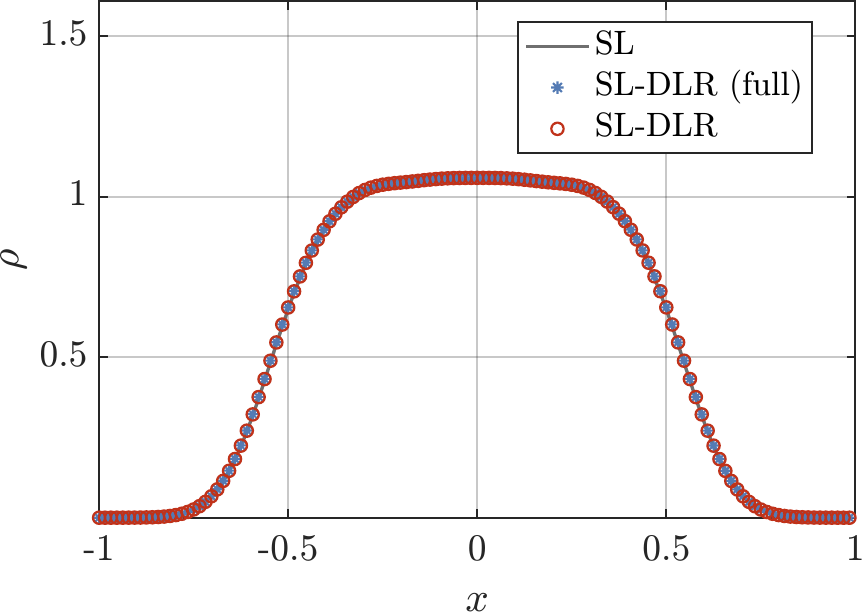}
    \end{subfigure}
    \caption{Example~\ref{ex:2d-variable-sigma}.
        SL-DLR density contours (top row) and density profiles along $y=0$ (bottom row).
        From left to right: $T=0.002$, $T=0.006$, $T=0.010$.}
    \label{fig:2d-variable-sigma-density}
\end{figure}

\begin{table}[htbp!]
    \centering
    \caption{Example~\ref{ex:2d-variable-sigma}.
        Wall-clock cost for $T=0.010$.}
    \label{tab:2d-variable-sigma-time}
    \begin{tabular}{ccccc}
        \toprule
        Method       & Total (s) & Per step (s) & Steps & $\Delta t$ \\
        \midrule
        SL           & 2413.51   & 3.77e+01     & 64    & 1.56e-04   \\
        SL-DLR(full) & 117.99    & 1.84e+00     & 64    & 1.56e-04   \\
        SL-DLR       & 22.21     & 3.47e-01     & 64    & 1.56e-04   \\
        \bottomrule
    \end{tabular}
\end{table}

We use this example to compare SL-DLR with the IMEX-S-BUG method \cite{liJiangZhangXiong2026temporalStability}, a low-rank AP-DLR scheme based on a macro--micro decomposition, an IMEX time discretization, and a Schur-complement formulation.
For SL-DLR, we use two time steps, $\Delta t = 0.01\Delta x$ and $\Delta t = 0.005\Delta x$.
For IMEX-S-BUG, the time step is chosen according to the stability criterion in \cite{liJiangZhangXiong2026temporalStability}, which gives $\Delta t \approx 0.005\Delta x$ for this test.
Figure~\ref{fig:2d-variable-sigma-imex-slice} shows that the two methods give comparable density profiles along $y=0$.
Table~\ref{tab:2d-variable-sigma-imex-time} shows that their per-step costs are of the same order.
Since SL-DLR allows a larger time step in this test, it requires fewer steps and therefore reduces the total wall time.

\begin{figure}[htbp!]
    \centering
    \includegraphics[width=.5\textwidth]{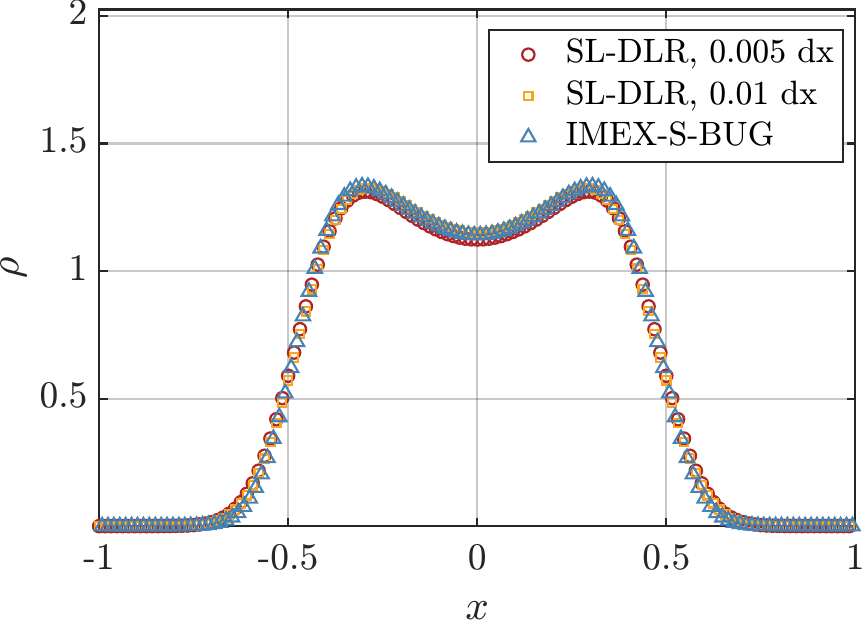}
    \caption{Example~\ref{ex:2d-variable-sigma}.
        Density profiles along $y=0$ for SL-DLR and IMEX-S-BUG at $T=0.006$.}
    \label{fig:2d-variable-sigma-imex-slice}
\end{figure}

\begin{table}[htbp!]
    \centering
    \caption{Example~\ref{ex:2d-variable-sigma}.
        Wall-clock cost comparison with IMEX-S-BUG at $T=0.006$.}
    \label{tab:2d-variable-sigma-imex-time}
    \begin{tabular}{ccccc}
        \toprule
        Method                           & Total (s) & Per step (s) & Steps & $\Delta t$ \\
        \midrule
        SL-DLR, $\Delta t=0.005\Delta x$ & 22.06     & 2.86e-01     & 77    & 7.81e-05   \\
        SL-DLR, $\Delta t=0.01\Delta x$  & 14.26     & 3.66e-01     & 39    & 1.56e-04   \\
        IMEX-S-BUG                       & 51.67     & 6.71e-01     & 77    & 7.82e-05   \\
        \bottomrule
    \end{tabular}
\end{table}

\subsection{Three-dimensional Gaussian data with varying scattering}\label{ex:3d-variable-sigma}

We finally consider a three-dimensional counterpart of Example~\ref{ex:2d-variable-sigma}.
The domain is $[-1,1]^3$, and the initial condition is
\[
    f(0,x,y,z,\bm{\Omega})
    =
    \frac{1}{4\pi \Sigma^2}
    \exp\left(-\frac{x^2+y^2+z^2}{4\Sigma^2}\right),
    \quad
    \Sigma^2 = 10^{-2}.
\]
The scattering coefficient is
\[
    \sigma^s(x,y,z)
    =
    \begin{cases}
        0.999\,c^4(c+\sqrt{2})^2(c-\sqrt{2})^2+0.001,
           & c=\sqrt{x^2+y^2+z^2}<1, \\
        1, & \text{otherwise}.
    \end{cases}
\]
We take $\varepsilon=10^{-2}$.
The discretization uses $N_x=N_y=N_z=48$, the $S_{32}$ angular rule with $N_\Omega=2048$, and rank $r=64$.
We choose the time step $\Delta t=0.01\Delta x$.
Figure~\ref{fig:3d-variable-sigma-density} shows close agreement between the sampled low-rank result and the full-rank SL reference in the center-line profiles.
Table~\ref{tab:3d-variable-sigma-time} reports the efficiency improvement in this three-dimensional test.

\begin{figure}[htbp!]
    \centering
    \begin{subfigure}[c]{0.32\textwidth}
        \centering
        \includegraphics[width=\textwidth]{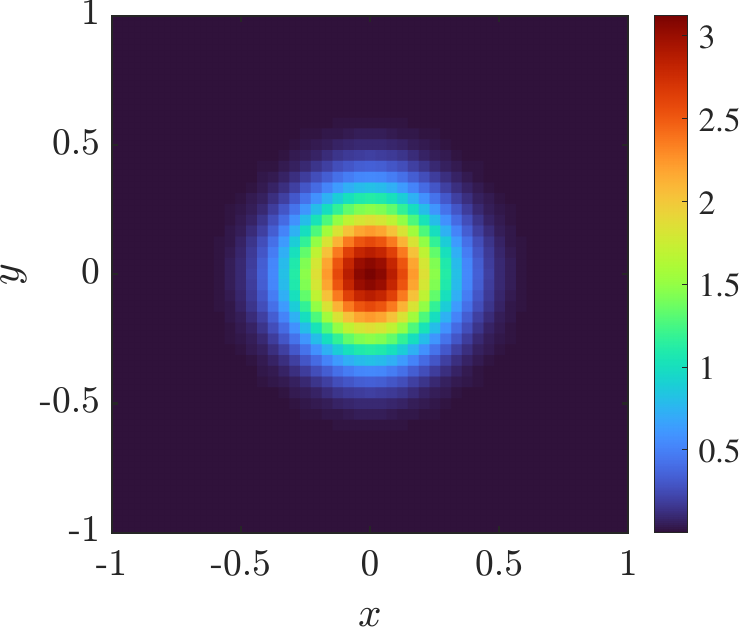}
    \end{subfigure}
    \begin{subfigure}[c]{0.32\textwidth}
        \centering
        \includegraphics[width=\textwidth]{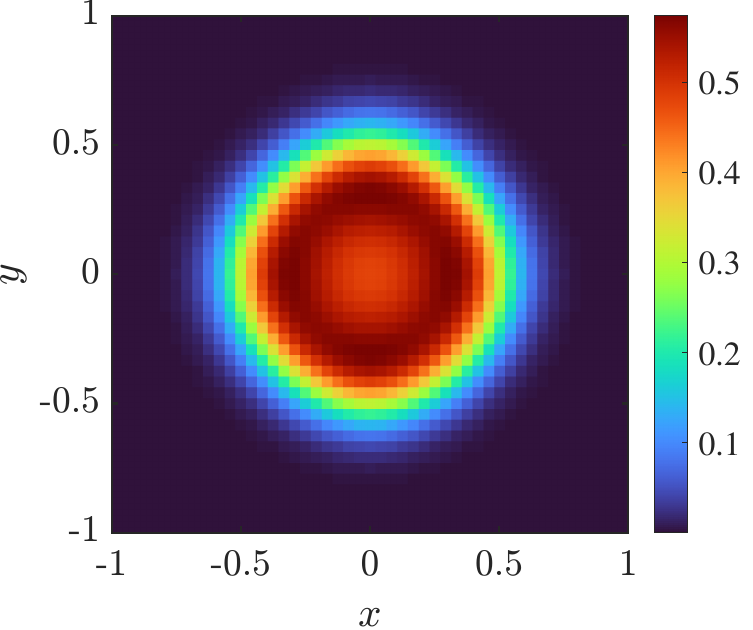}
    \end{subfigure}
    \begin{subfigure}[c]{0.32\textwidth}
        \centering
        \includegraphics[width=\textwidth]{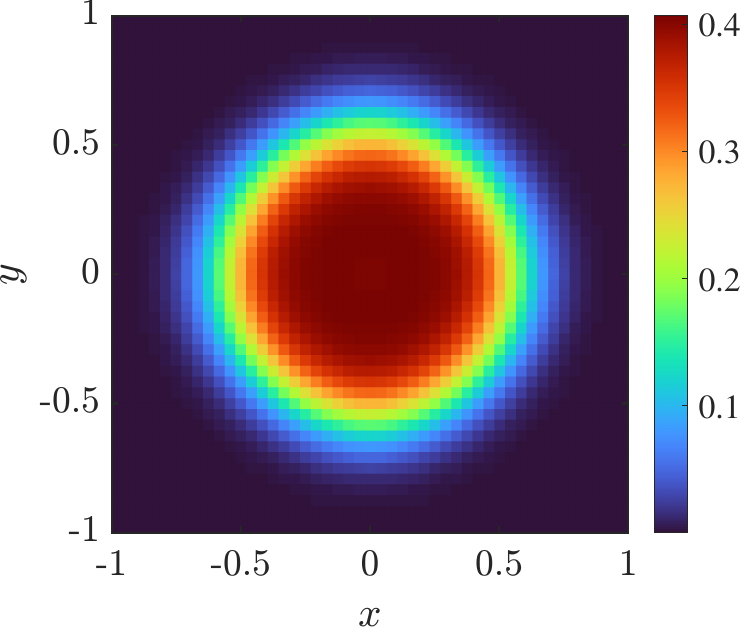}
    \end{subfigure}

    \begin{subfigure}[c]{0.31\textwidth}
        \centering
        \includegraphics[width=\textwidth]{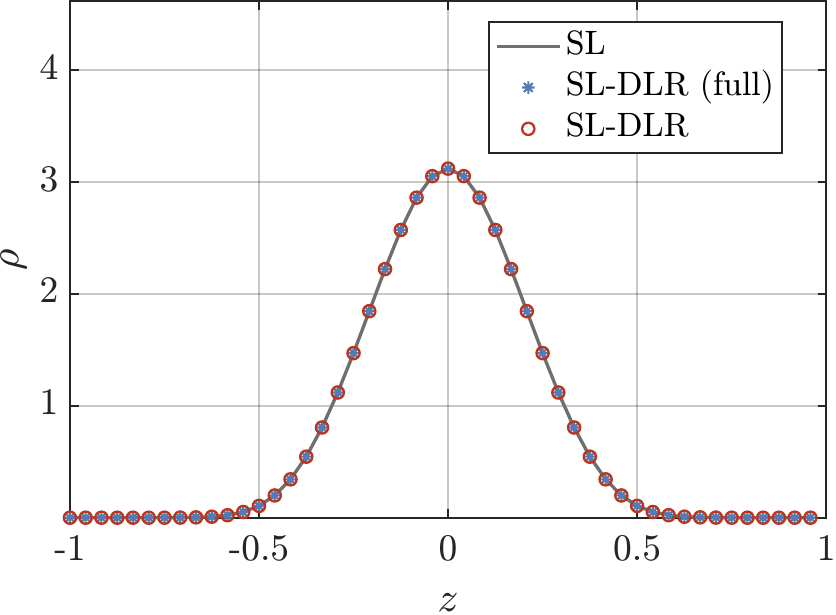}
    \end{subfigure}
    \begin{subfigure}[c]{0.32\textwidth}
        \centering
        \includegraphics[width=\textwidth]{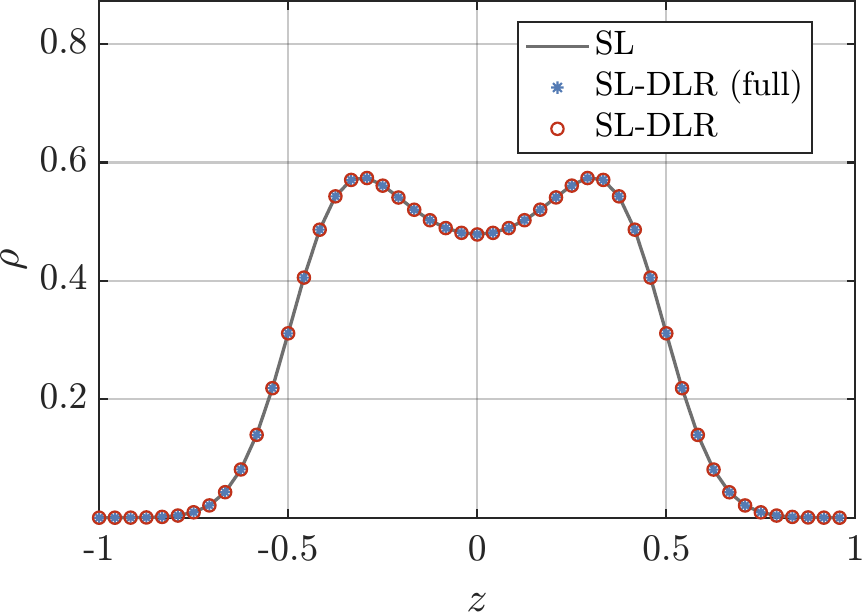}
    \end{subfigure}
    \begin{subfigure}[c]{0.32\textwidth}
        \centering
        \includegraphics[width=\textwidth]{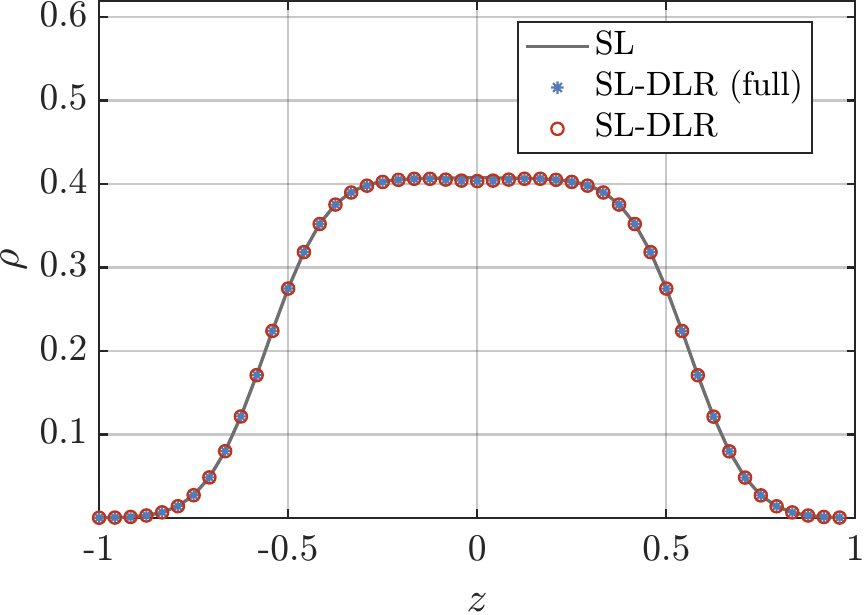}
    \end{subfigure}
    \caption{Example~\ref{ex:3d-variable-sigma}.
        SL-DLR density slices at $z=0$ (top row) and center-line profiles along $x=y=0$ (bottom row).
        From left to right: $T=0.002$, $T=0.006$, $T=0.010$.}
    \label{fig:3d-variable-sigma-density}
\end{figure}

\begin{table}[htbp!]
    \centering
    \caption{Example~\ref{ex:3d-variable-sigma}.
        Wall-clock cost for $T=0.010$.}
    \label{tab:3d-variable-sigma-time}
    \begin{tabular}{ccccc}
        \toprule
        Method       & Total (s) & Per step (s) & Steps & $\Delta t$ \\
        \midrule
        SL           & 22709.27  & 3.03e+02     & 75    & 1.33e-04   \\
        SL-DLR(full) & 1748.40   & 2.33e+01     & 75    & 1.33e-04   \\
        SL-DLR       & 476.76    & 6.36e+00     & 75    & 1.33e-04   \\
        \bottomrule
    \end{tabular}
\end{table}

\begin{remark}
    We use this test to quantify the storage and runtime gains of SL-DLR.
    With $N_s=48^3$ and $N_\Omega=2\times32^2$, the full-rank distribution contains $N_sN_\Omega\approx 2.3\times10^8$ scalar unknowns, whereas the rank-$64$ representation stores $(N_s+N_\Omega)r+r^2\approx 7.2\times10^6$ scalars, about $32$ times fewer.
    The timing results show a comparable gain: SL-DLR is about $29$ times faster than SL, with a factor of about $9$ from low rank and an additional factor of about $3.2$ from angular sampling.
    Although full FD-SL can be parallelized over angular directions, SL-DLR substantially reduces the storage and work requirements for large-scale 3D2V computations.
\end{remark}

\section{Conclusion} \label{sec:conclusion}

We have developed an AP SL-DLR framework for multiscale linear kinetic transport equations.
The method combines the large-time-step capability of SL discretizations with the storage and per-step cost reduction provided by DLR representations.
It couples a macroscopic density update with the BUG integrator for the kinetic distribution, and uses sampled angular quadrature to retain the reduced complexity in the SL flux derivative evaluation.
We establish unconditional stability of the SL-DLR(full) scheme in the constant-coefficient case, and quantify the error induced by angular sampling in the flux derivative.
The AP property of both SL-DLR and SL-DLR(full) is established in the diffusive limit.
Numerical experiments demonstrate that both SL-DLR and SL-DLR(full) are robust across kinetic and diffusive regimes, and confirm the computational efficiency gained from the low-rank representation and sampled flux evaluation.
Future work will focus on extending the proposed framework to nonlinear kinetic equations, adaptive-rank strategies, and high-order semi-Lagrangian discretizations.

\appendix
\section{Energy-stability proof details}\label{app:energy-stability}

We collect the proofs of the two stability propositions from Section~\ref{sec:theoretical-analysis}.
Throughout this appendix, we use the one-dimensional periodic source-free constant-coefficient assumptions stated in Section~\ref{sec:theoretical-analysis}.
Define the nonequilibrium part of the weighted state by $ G^n = Y^n-\rho^n(M\bm{1})^\top.$
Since $\rho^n=Y^nM\bm{1}$ and $\norm*{M\bm{1}}^2 = \sum_j w_j=1$, we have $G^nM\bm{1}=0$.
Hence, the equilibrium and nonequilibrium components are orthogonal in the weighted Frobenius norm, and
\begin{equation}\label{eq:theory-energy-orthogonal-split}
    \norm*{Y^n}_{F,\Delta x}^2 = \norm*{\rho^n}_{\Delta x}^2 + \norm*{G^n}_{F,\Delta x}^2.
\end{equation}

\begin{lemma}\label{lem:app-energy-macro}
    The provisional density $\rho^{n+1,*}$ satisfies
    \begin{equation}\label{eq:theory-energy-macro-tested}
        \begin{aligned}
            \frac12\norm*{\rho^{n+1,*}}_{\Delta x}^2
             & +
            \Delta t\norm*{\sqrt{\beta}D_x^-\rho^{n+1,*}}_{\Delta x}^2
            +
            \Delta t\norm*{\sqrt{\sigma^a}\rho^{n+1,*}}_{\Delta x}^2
            \\
             & \le
            \frac12\norm*{\rho^n}_{\Delta x}^2
            +
            \frac{\Delta t}{\varepsilon}
            \left|\inner*{\alpha_1\mathcal{J}^n,\rho^{n+1,*}}_{\Delta x}\right|.
        \end{aligned}
    \end{equation}
\end{lemma}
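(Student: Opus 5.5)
We prove the bound by testing the macroscopic linear system \eqref{eq:macroscopic-linear-system} (with $\Phi\equiv0$ and constant coefficients) against $\rho^{n+1,*}$ in the discrete inner product $\inner*{\cdot,\cdot}_{\Delta x}$. Multiplying the equation
\[
\rho^{n+1,*}-\rho^n+\frac{\Delta t}{\varepsilon}\alpha_1\mathcal{J}^n-\Delta t\,L_\beta\rho^{n+1,*}+\Delta t\,\sigma^a\rho^{n+1,*}=0
\]
by $\rho^{n+1,*}$ produces four groups of terms, and we handle each one separately.

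The time-difference term is treated with the polarization identity
\[
\inner*{\rho^{n+1,*}-\rho^n,\rho^{n+1,*}}_{\Delta x}=\tfrac12\norm*{\rho^{n+1,*}}_{\Delta x}^2-\tfrac12\norm*{\rho^n}_{\Delta x}^2+\tfrac12\norm*{\rho^{n+1,*}-\rho^n}_{\Delta x}^2 .
\]
The last term is nonnegative, so we drop it; this is exactly why the statement holds as an inequality rather than an equality.

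For the diffusion term we use periodicity and the constant-coefficient form $L_\beta=D_x^+(\beta D_x^-)$. Summation by parts, $\inner*{D_x^+u,q}_{\Delta x}=-\inner*{u,D_x^-q}_{\Delta x}$, then gives
\[
-\Delta t\inner*{L_\beta\rho^{n+1,*},\rho^{n+1,*}}_{\Delta x}=\Delta t\norm*{\sqrt{\beta}D_x^-\rho^{n+1,*}}_{\Delta x}^2 .
\]
This uses $\beta\ge0$. The absorption term is simply
\[
\Delta t\inner*{\sigma^a\rho^{n+1,*},\rho^{n+1,*}}_{\Delta x}=\Delta t\norm*{\sqrt{\sigma^a}\rho^{n+1,*}}_{\Delta x}^2 .
\]

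The flux term $\frac{\Delta t}{\varepsilon}\inner*{\alpha_1\mathcal{J}^n,\rho^{n+1,*}}_{\Delta x}$ is moved to the right-hand side and bounded by its absolute value. Collecting the four pieces yields \eqref{eq:theory-energy-macro-tested}.

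The only step needing care is the summation-by-parts identity for the periodic difference operators. We check it by an index shift on the periodic grid. We also verify that constant $\beta$ allows the weight to be pulled through, so that $\beta_{i+1/2}=\beta$ matches the operator $D_x^+(\beta D_x^-)$. Beyond that check, the argument is routine.
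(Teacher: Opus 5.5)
Your proof is correct and is essentially the paper's argument: the paper also tests the constant-coefficient macroscopic solve against $\rho^{n+1,*}$ and obtains the diffusion and absorption terms by periodic summation by parts. It then bounds the cross term by $\inner*{\rho^n,\rho^{n+1,*}}_{\Delta x}\le\frac12\norm*{\rho^n}_{\Delta x}^2+\frac12\norm*{\rho^{n+1,*}}_{\Delta x}^2$, which is exactly your polarization identity with the nonnegative term $\frac12\norm*{\rho^{n+1,*}-\rho^n}_{\Delta x}^2$ discarded, so the only difference is that your version keeps this dissipation term visible before dropping it.
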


\begin{proof}
    In the constant-coefficient source-free case, the macroscopic solve is
    \begin{equation}\label{eq:theory-energy-macro}
        \left(I+\Delta t(-L_\beta+\sigma^a I)\right)\rho^{n+1,*}
        =
        \rho^n-\frac{\Delta t}{\varepsilon}\alpha_1\mathcal{J}^n.
    \end{equation}
    Testing \eqref{eq:theory-energy-macro} by $\rho^{n+1,*}$ in
    $\inner*{\cdot,\cdot}_{\Delta x}$ gives
    \[
        \begin{aligned}
            \norm*{\rho^{n+1,*}}_{\Delta x}^2
             & +
            \Delta t\norm*{\sqrt{\beta}D_x^-\rho^{n+1,*}}_{\Delta x}^2
            +
            \Delta t\norm*{\sqrt{\sigma^a}\rho^{n+1,*}}_{\Delta x}^2
            \\
             & =
            \inner*{\rho^n,\rho^{n+1,*}}_{\Delta x}
            -
            \frac{\Delta t}{\varepsilon}
            \inner*{\alpha_1\mathcal{J}^n,\rho^{n+1,*}}_{\Delta x}.
        \end{aligned}
    \]
    The first right-hand side term is bounded by
    \[
        \inner*{\rho^n,\rho^{n+1,*}}_{\Delta x}
        \le
        \frac12\norm*{\rho^n}_{\Delta x}^2
        +
        \frac12\norm*{\rho^{n+1,*}}_{\Delta x}^2.
    \]
    This gives \eqref{eq:theory-energy-macro-tested}.
\end{proof}

\begin{lemma}\label{lem:app-energy-micro}
    The full-rank microscopic update of the full-rank FD-SL scheme satisfies
    \begin{equation}\label{eq:theory-energy-micro-rhostar}
        \norm*{Y^{n+1}}_{F,\Delta x}^2
        \le
        \frac{1}{(1+\mu\Delta t)^2}
        \Big(
        \norm*{G^n}_{F,\Delta x}^2
        +
        \left(1+\mu^s\Delta t\right)
        \left(
            \norm*{\rho^n}_{\Delta x}^2
            +
            \mu^s\Delta t
            \norm*{\rho^{n+1,*}}_{\Delta x}^2
            \right)
        \Big),
    \end{equation}
    where $\mu^s = \sigma^s / \varepsilon^2$ and $\mu=\mu^s+\sigma^a$.
\end{lemma}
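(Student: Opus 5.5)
The plan is to work column by column with the backward-Euler microscopic update, extract a contraction factor $(1+\mu\Delta t)^{-1}$ from the implicit relaxation, discard the upwind transport term by a sign argument, and then use the orthogonal split \eqref{eq:theory-energy-orthogonal-split} to separate the equilibrium and nonequilibrium contributions.

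\textbf{Step 1 (rewrite each column).} In the constant-coefficient, source-free, periodic setting, write $\mu^s=\sigma^s/\varepsilon^2$ and $\mu=\mu^s+\sigma^a$. The $j$-th column update becomes
\[
(1+\mu\Delta t)f_j^{n+1}+\frac{\Delta t\,v_j}{\varepsilon}D_x^{\mathrm{up},(j)}f_j^{n+1}=f_j^n+\mu^s\Delta t\,\rho^{n+1,*}.
\]
Multiplying by $\sqrt{w_j}$ gives the same relation for the weighted columns $y_j=\sqrt{w_j}f_j$ of $Y=FM$, with right-hand side $y_j^n+\mu^s\Delta t\sqrt{w_j}\,\rho^{n+1,*}$.

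\textbf{Step 2 (test with $y_j^{n+1}$ and drop the upwind term).} Take the $\inner*{\cdot,\cdot}_{\Delta x}$ inner product with $y_j^{n+1}$. The key sign property is
\[
\inner*{v_jD_x^{\mathrm{up},(j)}u,u}_{\Delta x}\ge0 .
\]
For $v_j\ge0$ this follows from the periodic identity $\inner*{D_x^-u,u}_{\Delta x}=\tfrac{\Delta x}{2}\norm*{D_x^-u}_{\Delta x}^2$. For $v_j<0$, the identity $\inner*{D_x^+u,u}_{\Delta x}=-\tfrac{\Delta x}{2}\norm*{D_x^+u}_{\Delta x}^2$ combines with the negative sign of $v_j$ to give a nonnegative term. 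Applying Cauchy--Schwarz on the right-hand side then yields
\[
(1+\mu\Delta t)\norm*{y_j^{n+1}}_{\Delta x}\le\norm*{y_j^n+\mu^s\Delta t\sqrt{w_j}\,\rho^{n+1,*}}_{\Delta x}.
\]
Squaring and summing over $j$ gives
\[
(1+\mu\Delta t)^2\norm*{Y^{n+1}}_{F,\Delta x}^2\le\norm*{Y^n+\mu^s\Delta t\,\rho^{n+1,*}(M\bm{1})^\top}_{F,\Delta x}^2 .
\]

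\textbf{Step 3 (orthogonal split and Young's inequality).} Write $Y^n=\rho^n(M\bm{1})^\top+G^n$. The right-hand side of Step 2 becomes
\[
\norm*{G^n+(\rho^n+\mu^s\Delta t\,\rho^{n+1,*})(M\bm{1})^\top}_{F,\Delta x}^2 .
\]
The cross term vanishes because $G^nM\bm{1}=0$, and $\norm*{M\bm{1}}=1$, so this equals
\[
\norm*{G^n}_{F,\Delta x}^2+\norm*{\rho^n+\mu^s\Delta t\,\rho^{n+1,*}}_{\Delta x}^2 .
\]
Finally apply the elementary inequality $\norm*{a+cb}^2\le(1+c)\bigl(\norm*{a}^2+c\norm*{b}^2\bigr)$ for $c\ge0$, which follows from $2c\inner*{a,b}\le c(\norm*{a}^2+\norm*{b}^2)$. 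With $c=\mu^s\Delta t$, $a=\rho^n$, $b=\rho^{n+1,*}$, and after dividing by $(1+\mu\Delta t)^2$, this gives exactly \eqref{eq:theory-energy-micro-rhostar}.

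\textbf{Main obstacle.} The only genuinely delicate point is the sign of the upwind transport term for both signs of $v_j$ under periodicity, handled in Step 2 via the two summation-by-parts identities. The rest is bookkeeping: the orthogonality from $G^nM\bm{1}=0$, and choosing Young's inequality with weight $c=\mu^s\Delta t$ so that the factor $(1+\mu^s\Delta t)$ appears in the stated form.
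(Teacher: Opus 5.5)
Your proposal is correct and follows essentially the same route as the paper: test the backward-Euler update with $Y^{n+1}$, drop the nonnegative upwind term, apply Cauchy--Schwarz, split the right-hand side using $G^nM\bm{1}=0$, and finish with $\norm*{a+cb}^2\le(1+c)(\norm*{a}^2+c\norm*{b}^2)$ for $c=\mu^s\Delta t$. Working column by column instead of with the whole matrix is only a cosmetic difference, and your summation-by-parts identities usefully justify the sign of the upwind term, which the paper only asserts.
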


\begin{proof}
    The full-rank microscopic update for the weighted state is
    \begin{equation}\label{eq:theory-energy-micro}
        (1+\mu\Delta t)Y^{n+1}
        +
        \frac{\Delta t}{\varepsilon}
        \left(D_x^-Y^{n+1}Q^+ + D_x^+Y^{n+1}Q^-\right)
        =
        Y^n+
        \mu^s\Delta t
        \rho^{n+1,*}(M\bm{1})^\top.
    \end{equation}
    Taking the inner product of \eqref{eq:theory-energy-micro} with $Y^{n+1}$, and using the nonnegativity of the upwind term,
    \[
        \inner*{
        D_x^-Y^{n+1}Q^+ + D_x^+Y^{n+1}Q^-,
        Y^{n+1}
        }_{F,\Delta x}
        \ge0,
    \]
    gives
    \[
        (1+\mu\Delta t)
        \norm*{Y^{n+1}}_{F,\Delta x}^2
        \le
        \inner*{
            Y^n
            +
            \mu^s\Delta t
            \rho^{n+1,*}(M\bm{1})^\top,
            Y^{n+1}
        }_{F,\Delta x}.
    \]
    Hence, by Cauchy--Schwarz,
    \begin{equation}\label{eq:theory-energy-micro-pre-split}
        \norm*{Y^{n+1}}_{F,\Delta x}^2
        \le
        \frac{1}{(1+\mu\Delta t)^2}
        \norm*{
            Y^n+
            \mu^s\Delta t
            \rho^{n+1,*}(M\bm{1})^\top
        }_{F,\Delta x}^2.
    \end{equation}
    Using $Y^n=\rho^n(M\bm{1})^\top+G^n$ and the orthogonality in
    \eqref{eq:theory-energy-orthogonal-split}, we obtain
    \[
        \norm*{
            Y^n+
            \mu^s\Delta t
            \rho^{n+1,*}(M\bm{1})^\top
        }_{F,\Delta x}^2
        =
        \norm*{G^n}_{F,\Delta x}^2
        +
        \norm*{
            \rho^n+
            \mu^s\Delta t\,\rho^{n+1,*}
        }_{\Delta x}^2.
    \]
    Finally, the Cauchy--Schwarz inequality gives
    \[
        \norm*{
            \rho^n+\mu^s\Delta t\,\rho^{n+1,*}
        }_{\Delta x}^2
        \le
        \left(1+\mu^s\Delta t\right)
        \left(
        \norm*{\rho^n}_{\Delta x}^2
        +
        \mu^s\Delta t
        \norm*{\rho^{n+1,*}}_{\Delta x}^2
        \right).
    \]
    Substitution into \eqref{eq:theory-energy-micro-pre-split} gives \eqref{eq:theory-energy-micro-rhostar} and completes the proof.
\end{proof}

\begin{lemma}\label{lem:app-energy-weak-flux}
    For any periodic test grid function $\phi$, the semi-Lagrangian flux derivative satisfies
    \begin{equation}\label{eq:theory-energy-weak-test}
        \left|\inner*{\mathcal{J}^n,\phi}_{\Delta x}\right|^2
        \le
        \frac{1}{3}\,\norm*{G^n}_{F,\Delta x}^2 \norm*{D_x^-\phi}_{\Delta x}^2.
    \end{equation}
    Moreover, the reconstructed flux derivative has zero discrete mean, namely
    \begin{equation}\label{eq:theory-energy-weak-test-mean}
        \inner*{\mathcal J^n,\bm{1}}_{\Delta x} = \Delta x\sum_i \mathcal J_i^n = 0.
    \end{equation}
\end{lemma}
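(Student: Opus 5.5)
The plan is to move every difference and interpolation operator in $\inner*{\mathcal{J}^n,\phi}_{\Delta x}$ onto the test function by discrete summation by parts, and then apply a weighted Cauchy--Schwarz inequality in the angular index. Write $g_j^n := f_j^n-\rho^n$. By the definition of $G^n$, its $j$-th column is $\sqrt{w_j}\,g_j^n$, so
\[
    \norm*{G^n}_{F,\Delta x}^2=\sum_j w_j\norm*{g_j^n}_{\Delta x}^2 .
\]
By \eqref{eq:fullrank-backtrack-stencil},
\[
    \inner*{\mathcal{J}^n,\phi}_{\Delta x}=\sum_j w_jv_j\inner*{\mathcal{B}_jD_x^{\mathrm{up},(j)}g_j^n,\phi}_{\Delta x}.
\]

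\textbf{Step 1: transfer the operators to $\phi$.} Under periodicity the shift $S_m$ is orthogonal, with $S_m^\top=S_{-m}$, so $\mathcal{B}_j^\top=(1-\lambda_j)S_{-m_j}+\lambda_jS_{-m_j-1}$. The adjoints of the difference operators are $(D_x^-)^\top=-D_x^+$ and $(D_x^+)^\top=-D_x^-$. All these operators are circulant and therefore commute. Hence
\[
    \inner*{\mathcal{B}_jD_x^{\mathrm{up},(j)}g_j^n,\phi}_{\Delta x}=-\inner*{g_j^n,\psi_j}_{\Delta x},
    \qquad
    \psi_j:=\mathcal{B}_j^\top\widetilde D_j\phi,
\]
where $\widetilde D_j=D_x^+$ if $v_j\ge0$ and $\widetilde D_j=D_x^-$ if $v_j<0$.

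\textbf{Step 2: bound $\psi_j$.} Since $\mathcal{B}_j^\top$ is a convex combination of orthogonal shifts, it has operator norm at most $1$ in $\norm*{\cdot}_{\Delta x}$. Moreover $D_x^+\phi=S_{-1}D_x^-\phi$, so $\norm*{D_x^+\phi}_{\Delta x}=\norm*{D_x^-\phi}_{\Delta x}$. Together these give $\norm*{\psi_j}_{\Delta x}\le\norm*{D_x^-\phi}_{\Delta x}$ for every $j$.

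\textbf{Step 3: Cauchy--Schwarz in $j$.} Split the weights as $w_jv_j=\sqrt{w_j}\,v_j\cdot\sqrt{w_j}$ and apply Cauchy--Schwarz first in space and then over $j$:
\[
    \left|\inner*{\mathcal{J}^n,\phi}_{\Delta x}\right|^2
    \le\Bigl(\sum_j w_jv_j^2\norm*{\psi_j}_{\Delta x}^2\Bigr)\Bigl(\sum_j w_j\norm*{g_j^n}_{\Delta x}^2\Bigr)
    \le\Bigl(\sum_j w_jv_j^2\Bigr)\norm*{D_x^-\phi}_{\Delta x}^2\norm*{G^n}_{F,\Delta x}^2 .
\]
The last factor to control is the discrete second moment $\sum_j w_jv_j^2$. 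With the normalized weights it equals $\inner*{v^2}=1/3$, provided the angular quadrature integrates $v^2$ exactly. This holds, for example, for Gauss--Legendre with $N_v\ge2$. I expect this to be the one genuine assumption that must be stated: for a general rule one only obtains $\sum_jw_jv_j^2$ in place of $1/3$.

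\textbf{Step 4: zero mean.} Take $\phi=\bm{1}$. Then $D_x^\pm\bm{1}=0$, so every $\psi_j$ vanishes and $\inner*{\mathcal J^n,\bm{1}}_{\Delta x}=0$, which is \eqref{eq:theory-energy-weak-test-mean}. Equivalently, \eqref{eq:theory-energy-weak-test} with $\norm*{D_x^-\bm 1}_{\Delta x}=0$ forces this.

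The main obstacle is the bookkeeping in Step 1. One must check that the interpolation adjoint and the upwind switch combine into an operator whose norm is bounded by $\norm*{D_x^-\phi}_{\Delta x}$ uniformly in $j$, independently of $m_j$ and $\lambda_j$, so that the estimate holds for arbitrarily large $\Delta t/\varepsilon$. The circulant, and hence commuting, structure under periodic boundary conditions is exactly what makes this work.
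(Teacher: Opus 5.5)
Your proposal is correct and follows essentially the same route as the paper: periodic summation by parts using that the circulant interpolation $\mathcal{B}_j$ commutes with $D_x^\pm$ and is a $\norm*{\cdot}_{\Delta x}$-contraction, then Cauchy--Schwarz in $j$, then $\phi=\bm{1}$ for the zero mean. The only difference is that the paper keeps $\mathcal{B}_j$ on $G_j^n$ rather than moving $\mathcal{B}_j^\top$ onto $\phi$, which is immaterial; and your remark that the constant $1/3$ requires $\sum_j w_jv_j^2=1/3$, i.e.\ exactness of the angular quadrature on $v^2$, is apt, since the paper uses this only tacitly in its final Cauchy--Schwarz step.
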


\begin{proof}
    With $G_j^n=\sqrt{w_j}(f_j^n-\rho^n)$, the macroscopic flux derivative equals
    \[
        \mathcal{J}^n
        =
        \sum_{j=1}^{N_v} w_j v_j \mathcal{B}_j \left(D_x^{\mathrm{up},(j)}(f_j^n-\rho^n)\right)
        =
        \sum_{j=1}^{N_v} \sqrt{w_j}v_j \mathcal{B}_j \left(D_x^{\mathrm{up},(j)}G_j^n\right).
    \]
    Under periodic boundary conditions, the backtracking interpolation operator reads
    \[
        \mathcal{B}_j = (1-\lambda_j) S_{m_j} + \lambda_j S_{m_j+1}, \qquad m_j \in \mathbb{Z},\,\, 0\le \lambda_j<1,
    \]
    where $S_m$ is a periodic shift.
    Hence $\mathcal{B}_j$ is a contraction in $\norm*{\,\cdot\,}_{\Delta x}$,
    \[
        \norm*{\mathcal{B}_j(q)}_{\Delta x}
        \le
        \norm*{q}_{\Delta x},
    \]
    and $\mathcal{B}_j$ commutes with $D_x^\pm$.
    Applying periodic summation by parts and using this commutation property, we obtain
    \[
        \inner*{v_j \mathcal{B}_j \left(D_x^{\mathrm{up},(j)}G_j^n\right), \phi}_{\Delta x}
        =
        - \inner*{v_j \mathcal{B}_j(G_j^n), D_x^{\mathrm{opp},(j)}\phi}_{\Delta x},
    \]
    where $D_x^{\mathrm{opp},(j)}=D_x^+$ if $v_j>0$ and $D_x^{\mathrm{opp},(j)}=D_x^-$ if $v_j\le0$.
    Therefore,
    \[
        \left|\inner*{v_j \mathcal{B}_j \left(D_x^{\mathrm{up},(j)}G_j^n\right), \phi}_{\Delta x}\right|
        \le
        |v_j| \norm*{G_j^n}_{\Delta x}\norm*{D_x^-\phi}_{\Delta x}.
    \]
    Applying Cauchy--Schwarz yields \eqref{eq:theory-energy-weak-test}.
    Taking $\phi = \bm{1}$ gives \eqref{eq:theory-energy-weak-test-mean}.
\end{proof}

\begin{proof}[Proof of Proposition~\ref{prop:theory-fullrank-energy-stability}]
    Let $T=-L_\beta+\sigma^a I$. Then
    \begin{equation}\label{eq:theory-energy-operator-identity}
        \inner*{T\phi,\phi}_{\Delta x}
        =
        \norm*{T^{1/2} \phi}_{\Delta x}^2
        =
        \norm*{\sqrt{\beta} D_x^-\phi}_{\Delta x}^2 + \norm*{\sqrt{\sigma^a} \phi}_{\Delta x}^2.
    \end{equation}
    We estimate the flux-derivative term in \eqref{eq:theory-energy-macro-tested} by
    \begin{align*}
              & \frac{\Delta t}{\varepsilon}
        \left|\inner*{\alpha_1\mathcal{J}^n,\rho^{n+1,*}}_{\Delta x}\right|
        \\
        \le{} &
        \frac{\Delta t \alpha_1^2}{4\varepsilon^2}
        \norm*{T^{-1/2} \mathcal{J}^n}_{\Delta x}^2
        +
        \Delta t \norm*{T^{1/2} \rho^{n+1,*}}_{\Delta x}^2
        \\
        ={}   &
        \frac{\Delta t \alpha_1^2}{4\varepsilon^2}
        \norm*{T^{-1/2} \mathcal{J}^n}_{\Delta x}^2
        + \Delta t \left( \norm*{\sqrt{\beta} D_x^- \rho^{n+1,*}}_{\Delta x}^2 + \norm*{\sqrt{\sigma^a} \rho^{n+1,*}}_{\Delta x}^2\right).
    \end{align*}
    Substituting it into \eqref{eq:theory-energy-macro-tested}, we obtain
    \begin{equation}\label{eq:theory-energy-macro-resolvent}
        \norm*{\rho^{n+1,*}}_{\Delta x}^2
        \le
        \norm*{\rho^n}_{\Delta x}^2
        +
        \frac{\Delta t \alpha_1^2}{2\varepsilon^2}
        \norm*{T^{-1/2} \mathcal{J}^n}_{\Delta x}^2.
    \end{equation}
    If $\sigma^a=0$, $T^{-1/2}$ is understood on the zero-mean subspace; this
    is compatible with Lemma~\ref{lem:app-energy-weak-flux}.

    By Lemma~\ref{lem:app-energy-weak-flux} and \eqref{eq:theory-energy-operator-identity},
    \[
        \left|\inner*{\mathcal{J}^n,\phi}_{\Delta x}\right|^2
        \le
        \frac{1}{3}\,\norm*{G^n}_{F,\Delta x}^2 \norm*{D_x^- \phi}_{\Delta x}^2
        \le
        \frac{1}{3\beta}\,\norm*{G^n}_{F,\Delta x}^2 \norm*{T^{1/2}\phi}_{\Delta x}^2.
    \]
    Taking the dual supremum over $\phi$ yields
    \[
        \norm*{T^{-1/2} \mathcal{J}^n}_{\Delta x}^2
        \le
        \frac{1}{3\beta}\norm*{G^n}_{F,\Delta x}^2.
    \]
    Combining this bound with \eqref{eq:theory-energy-macro-resolvent}, we obtain
    \begin{equation}\label{eq:theory-energy-macro-closed}
        \norm*{\rho^{n+1,*}}_{\Delta x}^2
        \le
        \norm*{\rho^n}_{\Delta x}^2
        +
        \frac{\Delta t}{2\varepsilon^2}
        \frac{\alpha_1^2}{3\beta}\norm*{G^n}_{F,\Delta x}^2.
    \end{equation}
    Substituting \eqref{eq:theory-energy-macro-closed} into \eqref{eq:theory-energy-micro-rhostar} gives
    \begin{equation}\label{eq:theory-energy-two-coefficients}
        \norm*{Y^{n+1}}_{F,\Delta x}^2
        \le
        \Gamma_\rho\norm*{\rho^n}_{\Delta x}^2
        +
        \Gamma_G\norm*{G^n}_{F,\Delta x}^2 ,
    \end{equation}
    where
    \[
        \Gamma_\rho
        ={}
        \frac{
            \left(1+\mu^s\Delta t\right)^2}
        {(1+\mu\Delta t)^2},
        \qquad
        \Gamma_G
        ={}
        \frac{
            1+
            \mu^s\Delta t
            \left(1+\mu^s\Delta t\right)
            \frac{\Delta t}{2\varepsilon^2}\frac{\alpha_1^2}{3\beta}
        }{(1+\mu\Delta t)^2}.
    \]
    Since $\mu \ge \mu^s$, we have $\Gamma_\rho\le1$.
    Using the constant-coefficient definitions of $\alpha_1$ and $\beta$,
    \[
        \frac{\Delta t}{\varepsilon^2}\frac{\alpha_1^2}{3\beta}
        =
        \frac{\mu}{\mu^s}
        \psi(\mu\Delta t),
        \qquad
        \psi(z):=\frac{z e^{-z}}{e^{z}-1}.
    \]
    Since $e^z - 1\ge z\ge z e^{-z}$ for $z>0$, we have
    $0<\psi(z)\le1$.  Therefore
    \[
        (1+\mu\Delta t)^2\Gamma_G
        ={}
        1 + \frac12
        \left(1+\mu^s\Delta t\right)
        \mu\Delta t\,\psi(\mu\Delta t)
        \le{}
        1 + \frac12(1+\mu\Delta t)\mu\Delta t
        \le
        (1+\mu\Delta t)^2,
    \]
    and hence $\Gamma_G\le1$.
    The stability estimate follows from \eqref{eq:theory-energy-two-coefficients}
    and \eqref{eq:theory-energy-orthogonal-split}.
\end{proof}

\begin{proof}[Proof of Proposition~\ref{prop:theory-lowrank-full-energy-stability}]
    Let $X^{n+1}$ and $V^{n+1}$ be the bases obtained from the K- and L-steps, respectively, and define $P^X=X^{n+1}(X^{n+1})^\top$, $P^V=V^{n+1}(V^{n+1})^\top$.
    The projected S-step reads
    \begin{equation}\label{eq:theory-energy-lowrank-projected-s}
        \begin{aligned}
             & (1+\mu\Delta t)Y_{\mathrm{SL\text{-}DLR(full)}}^{n+1}
            +
            \frac{\Delta t}{\varepsilon}
            P^X
            \left(
            D_x^-Y_{\mathrm{SL\text{-}DLR(full)}}^{n+1}Q^+
            +
            D_x^+Y_{\mathrm{SL\text{-}DLR(full)}}^{n+1}Q^-
            \right)
            P^V
            \\
             & \qquad =
            P^X
            \left(
            Y^n+
            \mu^s\Delta t
            \rho^{n+1,*}(M\bm{1})^\top
            \right)
            P^V.
        \end{aligned}
    \end{equation}
    Since $Y_{\mathrm{SL\text{-}DLR(full)}}^{n+1}\in \left\{X^{n+1}S(V^{n+1})^\top: S\in\mathbb{R}^{r\times r}\right\}$, for any matrix $A$,
    \[
        \inner*{
            P^XAP^V,
            Y_{\mathrm{SL\text{-}DLR(full)}}^{n+1}
        }_{F,\Delta x}
        =
        \inner*{
            A,
            Y_{\mathrm{SL\text{-}DLR(full)}}^{n+1}
        }_{F,\Delta x}.
    \]
    Testing \eqref{eq:theory-energy-lowrank-projected-s} with $Y_{\mathrm{SL\text{-}DLR(full)}}^{n+1}$ therefore removes the projectors from all inner products.
    The Cauchy--Schwarz inequality then gives
    \begin{align*}
        \norm*{Y_{\mathrm{SL\text{-}DLR(full)}}^{n+1}}_{F,\Delta x}^2
        \le
        \frac{1}{(1+\mu\Delta t)^2}\norm*{Y^n + \mu^s\Delta t \rho^{n+1,*}(M\bm{1})^\top}_{F,\Delta x}^2.
    \end{align*}
    This coincides with \eqref{eq:theory-energy-micro-pre-split}.
    The remainder of the proof follows the same argument as in the proof of Proposition~\ref{prop:theory-fullrank-energy-stability}.
\end{proof}

\bibliographystyle{plain}
\bibliography{references}

@article{an2008optimizing,
  author    = {An, Steven S and Kim, Theodore and James, Doug L},
  title     = {Optimizing cubature for efficient integration of subspace deformations},
  journal   = {ACM transactions on graphics (TOG)},
  year      = {2008},
  number    = {5},
  pages     = {1--10},
  publisher = {ACM New York, NY, USA},
  volume    = {27}
}

@article{bachmayr2023low,
  author  = {Bachmayr, Markus},
  title   = {Low-Rank Tensor Methods for Partial Differential Equations},
  journal = {Acta Numerica},
  year    = {2023},
  pages   = {1--121},
  volume  = {32}
}

@article{boscarino2013implicit,
  author  = {Boscarino, Sebastiano and Pareschi, Lorenzo and Russo, Giovanni},
  title   = {Implicit-Explicit Runge--Kutta Schemes for Hyperbolic Systems and Kinetic Equations in the Diffusion Limit},
  journal = {SIAM Journal on Scientific Computing},
  year    = {2013},
  number  = {1},
  pages   = {A22--A51},
  volume  = {35}
}

@article{cai2024asymptotic,
  author  = {Cai, Yi and Zhang, Guoliang and Zhu, Hongqiang and Xiong, Tao},
  title   = {Asymptotic Preserving Semi-{{Lagrangian}} Discontinuous {{Galerkin}} Methods for Multiscale Kinetic Transport Equations},
  journal = {Journal of Computational Physics},
  year    = 2024,
  doi     = {10.1016/j.jcp.2024.113190},
  issn    = {00219991},
  pages   = {113190},
  volume  = {513}
}

@book{case1967,
  author    = {Case, K. M. and Zweifel, P. F.},
  title     = {Linear Transport Theory},
  year      = {1967},
  publisher = {Addison-Wesley}
}

@book{Cercignani1988,
  author    = {Cercignani, Carlo},
  title     = {The Boltzmann Equation and Its Applications},
  year      = {1988},
  publisher = {Springer},
  series    = {Applied Mathematical Sciences},
  volume    = {67}
}

@article{ceruti2022rank,
  author    = {Ceruti, Gianluca and Kusch, Jonas and Lubich, Christian},
  title     = {A Rank-Adaptive Robust Integrator for Dynamical Low-Rank Approximation},
  journal   = {BIT Numerical Mathematics},
  year      = {2022},
  number    = {4},
  pages     = {1149--1174},
  publisher = {Springer},
  volume    = {62}
}

@article{ceruti2022unconventional,
  author  = {Ceruti, Gianluca and Lubich, Christian},
  title   = {An Unconventional Robust Integrator for Dynamical Low-Rank Approximation},
  journal = {BIT Numerical Mathematics},
  year    = {2022},
  pages   = {23--44},
  volume  = {62}
}

@article{ceruti2025galerkin,
  author  = {Ceruti, Gianluca and Crouseilles, Nicolas and Einkemmer, Lukas},
  title   = {A Galerkin Alternating Projection Method for Kinetic Equations in the Diffusive Limit},
  journal = {arXiv preprint arXiv:2505.19929},
  year    = {2025}
}

@book{Chandrasekhar1960,
  author    = {Chandrasekhar, S.},
  title     = {Radiative Transfer},
  year      = {1960},
  publisher = {Dover Publications}
}

@article{chaturantabut2010nonlinear,
  author    = {Chaturantabut, Saifon and Sorensen, Danny C},
  title     = {Nonlinear model reduction via discrete empirical interpolation},
  journal   = {SIAM Journal on Scientific Computing},
  year      = {2010},
  number    = {5},
  pages     = {2737--2764},
  publisher = {SIAM},
  volume    = {32}
}

@article{dektor2024interpolatoryBGK,
  author  = {Dektor, Alec and Einkemmer, Lukas},
  title   = {Interpolatory Dynamical Low-Rank Approximation for the 3+3d {Boltzmann}-{BGK} Equation},
  journal = {arXiv preprint arXiv:2411.15990},
  year    = {2024}
}

@article{DimarcoPareschi2014,
  author  = {Dimarco, Giacomo and Pareschi, Lorenzo},
  title   = {Numerical Methods for Kinetic Equations},
  journal = {Acta Numerica},
  year    = {2014},
  doi     = {10.1017/S0962492914000063},
  pages   = {369--520},
  volume  = {23}
}

@article{ding2021dynamical,
  author  = {Ding, Zhiyan and Einkemmer, Lukas and Li, Qin},
  title   = {Dynamical Low-Rank Integrator for the Linear Boltzmann Equation: Error Analysis in the Diffusion Limit},
  journal = {SIAM Journal on Numerical Analysis},
  year    = {2021},
  number  = {4},
  pages   = {2254--2285},
  volume  = {59}
}

@article{ding2023accuracy,
  author  = {Ding, Mingchang and Qiu, Jing-Mei and Shu, Ruiwen},
  title   = {Accuracy and Stability Analysis of the Semi-Lagrangian Method for Stiff Hyperbolic Relaxation Systems and Kinetic {BGK} Model},
  journal = {Multiscale Modeling \& Simulation},
  year    = {2023},
  number  = {1},
  pages   = {143--167},
  volume  = {21}
}

@article{drmac2016new,
  author    = {Drmac, Zlatko and Gugercin, Serkan},
  title     = {A new selection operator for the discrete empirical interpolation method---improved a priori error bound and extensions},
  journal   = {SIAM Journal on Scientific Computing},
  year      = {2016},
  number    = {2},
  pages     = {A631--A648},
  publisher = {SIAM},
  volume    = {38}
}

@article{einkemmer2018lowrank,
  author  = {Einkemmer, Lukas and Lubich, Christian},
  title   = {A Low-Rank Projector-Splitting Integrator for the {Vlasov--Poisson} Equation},
  journal = {SIAM Journal on Scientific Computing},
  year    = {2018},
  doi     = {10.1137/18M116383X},
  issn    = {1064-8275, 1095-7197},
  number  = {5},
  pages   = {B1330--B1360},
  volume  = {40}
}

@article{einkemmer2021efficient,
  author  = {Einkemmer, Lukas and Hu, Jingwei and Ying, Lexing},
  title   = {An Efficient Dynamical Low-Rank Algorithm for the {Boltzmann}-{BGK} Equation Close to the Compressible Viscous Flow Regime},
  journal = {SIAM Journal on Scientific Computing},
  year    = {2021},
  number  = {5},
  pages   = {B1057--B1080},
  volume  = {43}
}

@article{einkemmer2025review,
  author  = {Einkemmer, Lukas and Kormann, Katharina and Kusch, Jonas and McClarren, Ryan G. and Qiu, Jing-Mei},
  title   = {A Review of Low-Rank Methods for Time-Dependent Kinetic Simulations},
  journal = {Journal of Computational Physics},
  year    = {2025},
  doi     = {10.1016/j.jcp.2025.114191},
  issn    = {00219991},
  pages   = {114191},
  volume  = {538}
}

@article{einkemmerHuKusch2024energyStable,
  author  = {Einkemmer, Lukas and Hu, Jingwei and Kusch, Jonas},
  title   = {Asymptotic-Preserving and Energy Stable Dynamical Low-Rank Approximation},
  journal = {SIAM Journal on Numerical Analysis},
  year    = {2024},
  doi     = {10.1137/23M1547603},
  number  = {1},
  pages   = {73--92},
  volume  = {62}
}

@article{einkemmerHuWang2021apDLR,
  author  = {Einkemmer, Lukas and Hu, Jingwei and Wang, Yubo},
  title   = {An Asymptotic-Preserving Dynamical Low-Rank Method for the Multi-Scale Multi-Dimensional Linear Transport Equation},
  journal = {Journal of Computational Physics},
  year    = {2021},
  doi     = {10.1016/j.jcp.2021.110353},
  pages   = {110353},
  volume  = {439}
}

@article{frank2025asymptotic,
  author  = {Frank, Martin and Kusch, Jonas and Patwardhan, Chinmay},
  title   = {Asymptotic-Preserving and Energy Stable Dynamical Low-Rank Approximation for Thermal Radiative Transfer Equations},
  journal = {Multiscale Modeling \& Simulation},
  year    = {2025},
  doi     = {10.1137/24M1646303},
  number  = {1},
  pages   = {278--312},
  volume  = {23}
}

@book{ganapol2008analytical,
  author    = {Ganapol, Barry D.},
  title     = {Analytical Benchmarks for Nuclear Engineering Applications: Case Studies in Neutron Transport Theory},
  year      = {2008},
  address   = {Issy-les-Moulineaux, France},
  isbn      = {978-92-64-99056-2},
  note      = {NEA No. 6292},
  number    = {NEA/DB/DOC(2008)1},
  publisher = {Nuclear Energy Agency, Organisation for Economic Co-operation and Development},
  series    = {OECD/NEA Data Bank}
}

@article{Ghahremani2024,
  author  = {Ghahremani, Behzad and Babaee, Hessam},
  title   = {A {DEIM} {Tucker} Tensor Cross Algorithm and Its Application to Dynamical Low-Rank Approximation},
  journal = {Computer Methods in Applied Mechanics and Engineering},
  year    = {2024},
  doi     = {10.1016/j.cma.2024.116879},
  pages   = {116879},
  volume  = {423}
}

@article{guo2024conservative,
  author    = {Guo, Wei and Qiu, Jing-Mei},
  title     = {A Conservative Low Rank Tensor Method for the {Vlasov} Dynamics},
  journal   = {SIAM Journal on Scientific Computing},
  year      = {2024},
  doi       = {10.1137/22M1473960},
  issn      = {1064-8275},
  number    = {1},
  pages     = {A232--A263},
  publisher = {Society for Industrial and Applied Mathematics},
  volume    = {46}
}

@article{guo2024local,
  author    = {Guo, Wei and Ema, Jannatul Ferdous and Qiu, Jing-Mei},
  title     = {A Local Macroscopic Conservative ({LoMaC}) Low Rank Tensor Method with the Discontinuous Galerkin Method for the {Vlasov} Dynamics},
  journal   = {Communications on Applied Mathematics and Computation},
  year      = {2024},
  number    = {1},
  pages     = {550--575},
  publisher = {Springer},
  volume    = {6}
}

@article{guo2025inexact,
  author  = {Guo, Wei and Peng, Zhichao},
  title   = {An Inexact Low-Rank Source Iteration for Steady-State Radiative Transfer Equation with Diffusion Synthetic Acceleration},
  journal = {arXiv preprint arXiv:2509.00805},
  year    = {2025}
}

@article{guo2026highly,
  author  = {Guo, Wei and Peng, Zhichao},
  title   = {Highly Efficient Rank-Adaptive Sweep-based {{SI-DSA}} for the Radiative Transfer Equation via Mild Space Augmentation},
  journal = {arXiv preprint arXiv:2603.25233},
  year    = {2026}
}

@article{haut2026efficient,
  author  = {Haut, Terry and Loffeld, John and Einkemmer, Lukas and Guthrey, Pierson and Brunner, Stefan and Schill, William},
  title   = {Efficient {{SN-like}} and {{PN-like}} Dynamic Low Rank methods for Thermal Radiative Transfer},
  journal = {arXiv preprint arXiv:2601.18705},
  year    = {2026}
}

@article{hernandez2017dimensional,
  author    = {Hernandez, Joaquin Alberto and Caicedo, Manuel Alejandro and Ferrer, Alex},
  title     = {Dimensional hyper-reduction of nonlinear finite element models via empirical cubature},
  journal   = {Computer methods in applied mechanics and engineering},
  year      = {2017},
  pages     = {687--722},
  publisher = {Elsevier},
  volume    = {313}
}

@article{jin1999efficient,
  author  = {Jin, Shi},
  title   = {Efficient Asymptotic-Preserving ({AP}) Schemes for Some Multiscale Kinetic Equations},
  journal = {SIAM Journal on Scientific Computing},
  year    = {1999},
  pages   = {441--454},
  volume  = {21}
}

@article{jin2022asymptotic,
  author  = {Jin, Shi},
  title   = {Asymptotic-Preserving Schemes for Multiscale Physical Problems},
  journal = {Acta Numerica},
  year    = {2022},
  pages   = {1--82},
  volume  = {31}
}

@article{JLQX15,
  author  = {Jang, J. and Li, F. and Qiu, J.-M. and Xiong, T.},
  title   = {High Order Asymptotic Preserving {DG}-{IMEX} Schemes for Discrete-Velocity Kinetic Equations in a Diffusive Scaling},
  journal = {Journal of Computational Physics},
  year    = {2015},
  pages   = {199--224},
  volume  = {281}
}

@article{kieri2016discretized,
  author  = {Kieri, Emil and Lubich, Christian and Walach, Hanna},
  title   = {Discretized Dynamical Low-Rank Approximation in the Presence of Small Singular Values},
  journal = {SIAM Journal on Numerical Analysis},
  year    = {2016},
  doi     = {10.1137/15M1026791},
  pages   = {1020--1038},
  volume  = {54}
}

@article{koch2007dynamical,
  author    = {Koch, Othmar and Lubich, Christian},
  title     = {Dynamical Low-Rank Approximation},
  journal   = {SIAM Journal on Matrix Analysis and Applications},
  year      = {2007},
  number    = {2},
  pages     = {434--454},
  publisher = {SIAM},
  volume    = {29}
}

@article{Kormann2015,
  author  = {Kormann, Katharina},
  title   = {A Semi-Lagrangian {Vlasov} Solver in Tensor Train Format},
  journal = {SIAM Journal on Scientific Computing},
  year    = {2015},
  doi     = {10.1137/140971270},
  number  = {4},
  pages   = {B613--B632},
  volume  = {37}
}

@article{kusch2022low,
  author  = {Kusch, Jonas and Whewell, Benjamin and McClarren, Ryan G. and Frank, Martin},
  title   = {A Low-Rank Power Iteration Scheme for Neutron Transport Criticality Problems},
  journal = {Journal of Computational Physics},
  year    = {2022},
  pages   = {111587},
  volume  = {470}
}

@article{kusch2023robust,
  author  = {Kusch, Jonas and Stammer, Pia},
  title   = {A Robust Collision Source Method for Rank Adaptive Dynamical Low-Rank Approximation in Radiation Therapy},
  journal = {ESAIM: Mathematical Modelling and Numerical Analysis},
  year    = {2023},
  number  = {2},
  pages   = {865--891},
  volume  = {57}
}

@article{larsen2009advances,
  author  = {Larsen, Edward W. and Morel, Jim E.},
  title   = {Advances in Discrete-Ordinates Methodology},
  journal = {Nuclear Computational Science: A Century in Review},
  year    = {2009},
  pages   = {1--84}
}

@article{lemou2008new,
  author  = {Lemou, Mohammed and Mieussens, Luc},
  title   = {A New Asymptotic Preserving Scheme Based on Micro-Macro Formulation for Linear Kinetic Equations in the Diffusion Limit},
  journal = {SIAM Journal on Scientific Computing},
  year    = {2008},
  pages   = {334--368},
  volume  = {31}
}

@book{lewis1983computational,
  author    = {Lewis, Elmer Eugene and Miller, Warren F},
  title     = {{Computational Methods of Neutron Transport}},
  year      = {1983},
  publisher = {John Wiley and Sons, Inc., New York, NY}
}

@article{liJiangZhangXiong2026temporalStability,
  author  = {Li, Shun and Jiang, Yan and Zhang, Mengping and Xiong, Tao},
  title   = {Temporal-Stability-Enhanced and Energy-Stable Dynamical Low-Rank Approximation for Multiscale Linear Kinetic Transport Equations},
  journal = {arXiv preprint arXiv:2602.12337},
  year    = {2026},
  url     = {https://arxiv.org/abs/2602.12337}
}

@article{liu2010analysis,
  author  = {Liu, Jian-Guo and Mieussens, Luc},
  title   = {Analysis of an Asymptotic Preserving Scheme for Linear Kinetic Equations in the Diffusion Limit},
  journal = {SIAM Journal on Numerical Analysis},
  year    = {2010},
  pages   = {1474--1491},
  volume  = {48}
}

@article{lubich2014projector,
  author  = {Lubich, Christian and Oseledets, Ivan V.},
  title   = {A Projector-Splitting Integrator for Dynamical Low-Rank Approximation},
  journal = {BIT Numerical Mathematics},
  year    = {2014},
  doi     = {10.1007/s10543-013-0454-0},
  pages   = {171--188},
  volume  = {54}
}

@article{peng2020low,
  author  = {Peng, Zhuogang and McClarren, Ryan G. and Frank, Martin},
  title   = {A Low-Rank Method for Two-Dimensional Time-Dependent Radiation Transport Calculations},
  journal = {Journal of Computational Physics},
  year    = {2020},
  pages   = {109735},
  volume  = {421}
}

@article{peng2021asymptotic,
  author  = {Peng, Zhichao and Li, Fengyan},
  title   = {Asymptotic Preserving {IMEX-DG-S} Schemes for Linear Kinetic Transport Equations Based on Schur Complement},
  journal = {SIAM Journal on Scientific Computing},
  year    = {2021},
  doi     = {10.1137/20M134486X},
  number  = {2},
  pages   = {A1194--A1220},
  volume  = {43}
}

@article{peng2023sweep,
  author  = {Peng, Zhuogang and McClarren, Ryan G.},
  title   = {A Sweep-Based Low-Rank Method for the Discrete Ordinate Transport Equation},
  journal = {Journal of Computational Physics},
  year    = {2023},
  pages   = {111748},
  volume  = {473}
}

@article{sands2025high,
  author    = {Sands, William A and Guo, Wei and Qiu, Jing-Mei and Xiong, Tao},
  title     = {High-order adaptive rank integrators for multiscale linear kinetic transport equations in the hierarchical tucker format},
  journal   = {SIAM Journal on Scientific Computing},
  year      = {2025},
  number    = {6},
  pages     = {A3383--A3412},
  publisher = {SIAM},
  volume    = {47}
}

@article{sandsQiuHayesZheng2026greedyBGK,
  author  = {Sands, William A. and Qiu, Jing-Mei and Hayes, Daniel and Zheng, Nanyi},
  title   = {An Adaptive-Rank Approach with Greedy Sampling for Multi-Scale {BGK} Equations},
  journal = {Journal of Computational Physics},
  year    = {2026},
  doi     = {10.1016/j.jcp.2025.114523},
  pages   = {114523},
  volume  = {547}
}

@article{tyrtyshnikov1995pseudo,
  author  = {Tyrtyshnikov, Eugene E. and Goreinov, Sergei A. and Zamarashkin, Nikolai L.},
  title   = {Pseudo-Skeleton Approximations},
  journal = {Doklady Akademii Nauk},
  year    = {1995},
  number  = {2},
  pages   = {151--152},
  volume  = {343}
}

@article{zhang2023asymptotic,
  author  = {Zhang, Guoliang and Zhu, Hongqiang and Xiong, Tao},
  title   = {Asymptotic {{Preserving}} and {{Uniformly Unconditionally Stable Finite Difference Schemes}} for {{Kinetic Transport Equations}}},
  journal = {SIAM Journal on Scientific Computing},
  year    = 2023,
  doi     = {10.1137/22M1533815},
  issn    = {1064-8275, 1095-7197},
  number  = {5},
  pages   = {B697-B730},
  volume  = {45}
}

@article{zheng2025semi,
  author  = {Zheng, Nanyi and Hayes, Daniel and Christlieb, Andrew and Qiu, Jing-Mei},
  title   = {A Semi-Lagrangian Adaptive-Rank ({SLAR}) Method for Linear Advection and Nonlinear {Vlasov}-{Poisson} System},
  journal = {Journal of Computational Physics},
  year    = {2025},
  pages   = {113970},
  volume  = {532}
}

@article{zheng2025highDimensionalSLAR,
  author  = {Zheng, Nanyi and Sands, William A. and Hayes, Daniel and Christlieb, Andrew J. and Qiu, Jing-Mei},
  title   = {A Semi-Lagrangian Adaptive Rank ({SLAR}) Method for High-Dimensional {Vlasov} Dynamics},
  journal = {arXiv preprint arXiv:2510.24861},
  year    = {2025},
  url     = {https://arxiv.org/abs/2510.24861}
}

\end{document}